\definecolor{dkgr}{rgb}{1.0, 0.0, 0.0}
\definecolor{prpl}{rgb}{0.7, 0.0, 0.7}
\newtheorem{thm}{Theorem}[section]
\newtheorem{theorem}[thm]{Theorem}
\newtheorem{conj}[thm]{Conjecture}
\newtheorem{proposition}[thm]{Proposition}
\newtheorem{definition}[thm]{Definition}
\newtheorem{notation}[thm]{Notation}
 \newtheorem*{definition*}{Definition}
 \newtheorem{lemma}[thm]{Lemma}
\newtheorem{cor}[thm]{Corollary}
\newtheorem{corollary}[thm]{Corollary} 
\theoremstyle{remark} \newtheorem*{remark}{Remark}
\DeclareMathOperator{\PGL}{PGL}
\DeclareMathOperator{\GL}{GL}
\DeclareMathOperator{\Bun}{Bun}
\DeclareMathOperator{\Pic}{Pic}
\DeclareMathOperator{\Aut}{Aut}
\DeclareMathOperator{\Res}{Res}
\newcommand{\Qlbar}{{\overline{\mathbb{Q}}_{\ell}}}
\newcommand*{\Q}{\mathbb{Q}}
\newcommand*{\Z}{\mathbb{Z}}
\newcommand*{\G}{\mathbb{G}}
 \newcommand*{\A}{\mathbb{A}} 
\newcommand*{\D}{\mathbb{D}} \newcommand*{\N}{\mathbb{N}}
\newcommand*{\C}{\mathbb{C}} \newcommand*{\Hh}{\mathcal{H}} \newcommand*{\p}{\mathbb{P}}
\newcommand*{\PP}{\mathbb{P}} \newcommand*{\Oo}{\mathcal{O}} \newcommand*{\oO}{\mathcal{O}}
\newcommand*{\Ll}{\mathcal{L}}
\newcommand*{\Mm}{\mathcal{M}}
\newcommand*{\F}{\mathbb{F}}
\newcommand{\HC}{\mathrm{H}} \newcommand{\trf}{\mathrm{Tr}_F}
\author{Vivek Shende} \author{Jacob Tsimerman}
\begin{document} \title{Equidistribution in
$\mathrm{Bun}_2(\PP^1)$} \maketitle

\begin{abstract}
Fix a finite field.
A hyperelliptic curve determines a measure on the discrete space of rank two bundles on the projective line: the mass of
a given vector bundle is the number of line bundles whose pushforward it is.  In a sequence of hyperelliptic curves whose genera tend to
infinity, these measures tend to the natural measure on the space of rank two bundles.
This is a function field analogue of Duke's theorem
on the equidistribution of Heegner points, and can be proven similarly: it follows from a manipulation
of zeta functions, plus the Riemann Hypothesis for curves.

Likewise, a sequence of hyperelliptic curves equipped with line bundles gives rise to a
sequence of measures on the space of pairs of rank 2 bundles.
We give a conjectural classification of the possible limit measures which arise; this is a function
field analogue of the ``Mixing Conjecture'' of Michel and Venkatesh.  As in the number field
setting \cite{EMiV}, ergodic theory suffices when the line bundle is sufficiently special.

For the remaining bundles, we turn to geometry
and count points on intersections of translates of loci of special divisors in the Jacobian of a hyperelliptic curve.
To prove equidistribution, we would require two results.  The first, we prove: the upper cohomologies of these loci
agree with the cohomology of the Jacobian.  The second, which we establish in characteristic zero and conjecture
in characteristic $p$, is that the sum of the Betti numbers of these spaces grows at most as the exponential of the genus of
the hyperelliptic curve.
 \end{abstract}

\section{Introduction}

Let $k$ be a number field (e.g. $\Q$) or a function field (e.g. $\F_q(t)$).  Let $G$ be an algebraic
group over $k$, let $\A_k$ be the adeles over $k$, and let $X_G := G(k) \backslash G(\A_k)$ denote the symmetric space.
Since $G(\A_k)$ is locally compact and $G(k)$ is a discrete subgroup, there is a natural $G(\A_k)$ invariant measure $\mu_G$ on $X_G$.
We assume $X_G$ has finite volume and normalize so $\mu_G(X_G) = 1$.
Let $H \subset G$ be a subgroup such that $X_H$ has finite volume, which we normalize to $1$.  For $g \in G(\A_k)$,
there is a map $\rho_{g}: X_H \to X_G$ given by $h \mapsto \rho(h)g$.  Since we assume $X_H$ has finite volume,
there is a pushforward measure $\rho_{g*} \mu_H$.  Many examples in the literature point to the following commonly believed

\vspace{2mm} \noindent {\bf Equidistribution conjecture:}
If $G$ is a connected, simply connected group, the set of measures  $\{ \rho_{g*} \mu_H\} \cup \{0\}$\footnote{
We must include the zero measure since $X_G$ may be noncompact, $H$ may be the identity and $g$ may eventually escape
every compact set.}
 is weak$-*$ closed.
\vspace{2mm}

For non-simply connected groups,
one must be careful to distinguish `connected components' of the symmetric space. For example, if $G=\PGL_2$ then
there is a natural map $\mathrm{det} : X_G \to \mathrm{Cl}(k)/2\mathrm{Cl}(k)$, whose fibers
should be thought of as the `connected components'
of $X_G$. 
Thus for a connected group $G$ with simply connected cover $\widetilde{G}$, we set
$X_{G}^0:= \mathrm{im}(X_{\widetilde{G}}\to X_G)$.  For $g, H$ as above we define 
$$\rho_{g*}^0\mu_H = \frac{{\rho_{g*}\mu_H}_{\mid X_G^0}}{\mu_G(X_G^0)}$$

\vspace{2mm} \noindent {\bf Equidistribution conjecture':}
If $G$ is a connected group with simply connected cover $\tilde{G}$, the set of measures  $\{ \rho_{g*}^0 \mu_H\} \cup \{0\}$
 is weak$-*$ closed.
\vspace{2mm}

The standard approach to this question proceeds via dynamics and measure theory.  Equidistribution statements proven
in this manner include the following.  Over number fields, Ratner's theorem \cite{R1, R2} implies the restriction of the conjecture
to the set of groups $H$ such that for some completion $k_v$ of $k$,  $H_{/k_v}$ is  generated by unipotent elements. In particular,
 this includes all semisimple groups, and in that case there is an effective statement \cite{EMV}.  Over function fields, the analogue
 of Ratner's theorem is not known; on the other hand, the statement is known when $H$ is restricted to the set of semisimple groups \cite{EG}.
The case of $H$ a torus appears to be harder.  Linnik, assuming the general Riemann hypothesis, treated what is essentially the case of $G = \mathrm{PGL}_2$ and $H$ a torus
\cite{Lin}.  A more modern treatment, which moreover establishes some partial results for maximal rank tori in semisimple groups, can be found in \cite{ELMV1, ELMV2, ELMV3}.

Another approach exploits harmonic analysis on symmetric spaces.  Indeed, automorphic forms $\phi$ give a basis for the space of functions on $X_G$, and in some special
cases, there is a period formula expressing $\int \rho_{g}^* \phi\, d \mu_H$ as a special value of a twisted $L$-function associated with $\phi$.  This reduces the problem to establishing
subconvexity for the given $L$-function \cite{Du, W}.  In the function field case, this last step is generally a consequence of Deligne's work on the Weil conjectures \cite{D}.
Duke used this approach to treat the case of tori in $\mathrm{PGL}_2$ \cite{Du}.

Neither approach can currently treat the case when $H$ is permitted to range over low rank tori.  In this paper, we use
geometric methods to study the case where $k = \F_q(t)$, $G = \PGL_2 \times \PGL_2$, $H$ varies over rank 1 tori, and
$g$ is chosen such that $g G(\oO) g^{-1} \cap H$ is a maximal compact in $H$.

We begin by discussing the analogous question for $G = \PGL_2$. We recall
We recall in detail in Appendix \ref{sec:adeles} how to pass to geometry; the result is the following.
A rank 1 non-split torus $H_{/\F_q(t)}$ is canonically associated to a hyperelliptic curve, $\pi: C_H \to \PP^1$.
The only nontrivial maps of symmetric spaces which arise are
\begin{eqnarray*} \pi \circ \otimes L:  \Pic(C)/\pi^* \Pic (\PP^1)  & \to & \Bun_2(\PP^1) \\
M & \mapsto & \pi_* (M \otimes L)
\end{eqnarray*}

In this paper, $\Bun_2$ always refers to $\Bun_{\PGL_2}$. For $G = \PGL_2$, the $\otimes L$ do not affect the pushforward measure, so we suppress them.  
The equidistribution statement in the present case is:

\begin{thm} \label{thm:theta1}
    Let $\pi_i:C_i \to \PP^1$ be a sequence of hyperelliptic curves. Let $\mu_i$ the pushforward of the Haar measure
    on $\mathrm{Pic}(C)/\pi_i^* \mathrm{Pic} (\PP^1)$ to $\mathrm{Bun}_2(\PP^1)$.   If no curve appears infinitely many times, then
    the measures $\mu_i$ converge to the natural measure on $\mathrm{Bun}_2(\PP^1)$.
\end{thm}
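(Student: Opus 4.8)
\medskip
\noindent\textit{Proof strategy.}\ The plan is to exploit that $\Bun_2(\PP^1)$ is discrete: by Grothendieck's theorem its points are $\Oo\oplus\Oo(n)$, $n\ge 0$, with $\#\Aut_{\PGL_2}(\Oo^2)=q^3-q$ and $\#\Aut_{\PGL_2}(\Oo\oplus\Oo(n))=(q-1)q^{n+1}$ for $n\ge 1$, so the natural measure is $\nu(\{\Oo^2\})=(q-1)/(2q)$ and $\nu(\{\Oo\oplus\Oo(n)\})=(q^2-1)/(2q^{n+1})$ for $n\ge 1$. For probability measures on a countable discrete space weak-$*$ convergence is equivalent to convergence of all point masses together with uniform tightness, so the theorem reduces to these two statements. (Also, since there are finitely many hyperelliptic curves of each genus, the hypothesis forces $g_i\to\infty$.)

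First I would convert the point masses into counts on the curve. Setting $h_i:=\#\mathrm{Jac}(C_i)(\F_q)$, the group $\Pic(C_i)/\pi_i^*\Pic(\PP^1)$ has $2h_i$ elements, so $\mu_i(\{\Oo\oplus\Oo(n)\})=N_i(n)/(2h_i)$ with $N_i(n)=\#\{[M]:\pi_{i*}M\cong\Oo\oplus\Oo(n)\}$. Using $\pi_i^*\Oo(1)=g^1_2$ and Riemann--Roch one sees that the isomorphism type of $\pi_{i*}M$ is detected by the functions $m\mapsto h^0(C_i,M\otimes\pi_i^*\Oo(m))$, and that the representative of $[M]$ realizing the type $\Oo\oplus\Oo(n)$ has degree $g_i+1-n$ and is exactly one for which $M$ is effective but $M\otimes\pi_i^*\Oo(-1)$ is not. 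Hence $N_i(n)=w_{g_i+1-n}-w_{g_i-1-n}$, where $w_e$ is the number of effective line bundle classes of degree $e$ on $C_i$ (and $w_e=h_i$ once $e\ge g_i$). Here the hyperelliptic hypothesis is crucial: the classical structure of special divisors on a hyperelliptic curve gives $\#\{[D]\in\Pic^e:h^0(D)\ge r+1\}=w_{e-2r}$ for $e\le g_i$, so inserting this into $\#\mathrm{Sym}^eC_i(\F_q)=\sum_{r\ge 0}q^r\#\{h^0\ge r+1\}$ and telescoping yields $w_e=\#\mathrm{Sym}^eC_i(\F_q)-q\,\#\mathrm{Sym}^{e-2}C_i(\F_q)=[t^e]\bigl((1-qt^2)Z_{C_i}(t)\bigr)$ for $e\le g_i$, with $Z_{C_i}(t)=P_{C_i}(t)/((1-t)(1-qt))$ the zeta function. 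Extracting the coefficient via the partial fractions $\tfrac{1-qt^2}{(1-t)(1-qt)}=-1+\tfrac1{1-t}+\tfrac1{1-qt}$ (and the functional equation $P_{C_i}(1/q)=q^{-g_i}h_i$) writes $w_{g_i-j}=h_i(1+q^{-j})$ minus a correction assembled from truncated sums of the coefficients of $P_{C_i}$.

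The heart of the matter is the estimate that this correction equals $h_i+o(h_i)$, i.e. $w_{g_i-j}/h_i\to q^{-j}$ as $g_i\to\infty$ for each fixed $j\ge 0$. This is the function-field incarnation of the subconvexity step in Duke's proof: there are no everywhere-unramified cusp forms for $\PGL_2$ over $\F_q(t)$, so only the Eisenstein (continuous) part of the spectrum intervenes, the relevant periods are values of the $L$-function of the quadratic character $\chi_{C_i}$ attached to $C_i$ --- and $P_{C_i}(t)=L(\chi_{C_i},t)$ --- and the error in $w_{g_i-j}/h_i-q^{-j}$ is governed by the size of $P_{C_i}$, equivalently of truncated coefficient sums, at points off the edge of the critical strip, which the Riemann Hypothesis for curves ($|\alpha|=\sqrt q$ for Frobenius eigenvalues) bounds by a quantity negligible against $h_i$. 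I expect this estimate to be the main obstacle, requiring a bit more than the bare convexity bound when $q$ is small. Granting it, $\mu_i(\{\Oo\oplus\Oo(n)\})=(w_{g_i+1-n}-w_{g_i-1-n})/(2h_i)\to(q^{1-n}-q^{-1-n})/2=(q^2-1)/(2q^{n+1})$ for $n\ge 1$, and $\to(q-1)/(2q)$ for $n=0$, matching $\nu$; and for tightness the tail telescopes, $\sum_{n>N}\mu_i(\{\Oo\oplus\Oo(n)\})=(w_{g_i-N}+w_{g_i+1-N})/(2h_i)$, which tends to $(q^{-N}+q^{1-N})/2$, so taking $N$ large (and enlarging it to absorb the finitely many $C_i$ of small genus) gives uniform tightness. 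With the reduction of the first paragraph this proves the theorem.

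\medskip
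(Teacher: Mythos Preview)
Your approach is correct and coincides with the paper's ``first proof'': reduce to showing $\#\Theta_{g-j}(\F_q)/\#J(\F_q)\to q^{-j}$, express this via coefficients of $(1-qt^2)Z_C(t)$ (the paper extracts the closely related $\#C^{(n)}_0$ from $(1-t^2)(1-qt^2)Z_C(t)$ by a contour integral), and control the error by the Lindel\"of-type bound $P_C(t)=O_\epsilon(q^{\epsilon g})$ on $|t|=q^{-1/2}$---exactly the ``bit more than the bare convexity bound'' you flagged, which the paper imports from \cite{AT}. The paper also gives a second, cohomological proof (Lefschetz hyperplane plus the fact that $\Theta_i$ is a homology manifold, then Poincar\'e duality and the trace formula), valid for $q>16$, which you do not pursue.
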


Up to normalization, the natural measure on $\mathrm{Bun}_2$ assigns to each point the inverse of the number of automorphisms of the
corresponding vector bundle.
Rank two vector bundles on $\p^1$ are necessarily of the form $\Oo(a) \oplus \Oo(b)$.  Dividing out by line bundles, we take the identification
\begin{eqnarray*}
\mathrm{Bun}_2(\PP^1) & \longleftrightarrow & \N \\
\Oo(a) \oplus \Oo(b) & \longleftrightarrow &  |a-b|
\end{eqnarray*}

It is useful to further separate this according to the parity of $|a-b|$, into
\begin{eqnarray*} \mathrm{Bun}_2^0(\PP^1) & \longleftrightarrow & 2\N \\
 \mathrm{Bun}_2^1(\PP^1) & \longleftrightarrow & 2\N + 1 \end{eqnarray*}

The normalized natural measure on $\mathrm{Bun}_2(\PP^1)$ is characterized by
$$\mu(d + 2 \N) =  \frac{1}{2q^{d-1}} \,\,\,\,\,\,\,\,\,\,\,\,\,\, \mathrm{for }\,\, d > 0. $$


A bundle on $\PP^1$ can be characterized by the cohomology of all of its twists by $\oO(1)$.
In particular, given a hyperelliptic curve, $\pi: C \to \p^1$, (half) the map $\mathrm{Pic}(C)/\mathrm{Pic}(\PP^1) \to \mathrm{Bun}_2(\PP^1)$
is given explicitly by
\begin{eqnarray*}
\phi:\mathrm{Pic}^{g-1}(C)& \rightarrow & 2\N \\ \Ll & \mapsto & 2 \dim \HC^0(C, \Ll) \end{eqnarray*}

As there are only finitely many curves over $\F_q$ of any given genus, the limit in the theorem amounts to a limit as $g \to \infty$,
and, in terms of the explicit formula for the natural measure above, to
the assertion
\begin{equation}
\label{eq:pgl2limit}
\lim\limits_{g\to \infty} \frac{\#_q \{\Ll \in \mathrm{Pic}^{g-1}(C) : \HC^0(C,\Ll) \ge c \}}{\#_q \mathrm{Pic}^{g-1}(C)} = q^{1-2c}
\end{equation}

As $C$ is hyperelliptic, the loci in the numerator can be understood explicitly in terms of the symmetric powers.
An analysis of the zeta function of $C$ yields a proof of Theorem
\ref{thm:theta} along the lines of Duke's argument in the number field setting \cite{Du}.
However, this approach does not extend to the case of rank 1 tori in $G = \PGL_2 \times \PGL_2$.
This is due to the fact that Dukes approach crucially relies on a period integral formula, relating the integral of an automorphic form along a
torus to a special value of some corresponding L-function, and such a formula is absent in this setting.

We describe a different approach.  Because $C$ is hyperelliptic, the locus $\Theta_{g+1-2c}$ in the numerator is isomorphic to
the image under the Abel-Jacobi map of $C^{(g+1-2c)}$, and in particular is of codimension $2c-1$.  We will show it is set theoretically
a complete intersection of ample divisors.
  Were it smooth, we could apply the Lefschetz hyperplane theorem and Poincar\'e duality to compute the higher degree cohomologies
$$H^{> g}(\Theta_{g+1-2c}, \Qlbar) = H^{> g}(\mathrm{Pic}^{g-1}(C), \Qlbar [4c-2](2c-1) )$$
The Grothendieck-Lefschetz trace formula then implies that the LHS and RHS of (\ref{eq:pgl2limit}) differ
by the traces of the lower cohomologies.  Bounding the total dimension of these by
$N^g$ establishes the result for all $q > N$; in this case we may take $N = 4$.

While $\Theta_{g+1-2c}$ is in fact singular, it is nonetheless a homology
manifold \cite{IY}.  Thus we have Poincar\'e duality and may conclude as desired.

\vspace{2mm}

We turn to the case of $G = \PGL_2 \times \PGL_2$.  The maps of symmetric spaces, when
neither projection is trivial, are the following:

\begin{eqnarray*}   \mathrm{Pic}(C)/\pi^* \mathrm{Pic} (\PP^1)  & \to & \mathrm{Bun}_2(\PP^1) \times \mathrm{Bun}_2(\PP^1) \\
M & \mapsto & ( \pi_* (M \otimes L), \pi_* (M \otimes L') )
\end{eqnarray*}

Evidently the pushforward measure only depends on the ratio $L^{-1} L'$, so we henceforth take $L' = 1$.

The equidistribution statement in this case is:

\begin{conj} \label{conj:thetatheta}
Let $(C_i,L_i)$ be a sequence of hyperelliptic curves and line bundles on them.
Assume that for each $N\in\N$, there exists some $A(N)$ such that for $i>A(N)$, $L_i \notin \Theta_{N}$.   Then
some subsequence of the pushforward
measures converge to the natural measure on one of \begin{enumerate}
\item[(0)]  $\mathrm{Bun}_2^0(\PP^1) \times \mathrm{Bun}_2^0(\PP^1) \coprod \mathrm{Bun}_2^1(\PP^1) \times \mathrm{Bun}_2^1(\PP^1)$
\item[(1)]  $\mathrm{Bun}_2^0(\PP^1) \times \mathrm{Bun}_2^1(\PP^1) \coprod \mathrm{Bun}_2^1(\PP^1) \times \mathrm{Bun}_2^0(\PP^1)$
\end{enumerate}

If on the other hand such $A(N)$ do not exist, then there exists an effective divisor $D$ on $\PP^1$ and an infinite subsequence such that
$L_i \cong \oO_{C_i}(D_i)$ and $\pi_*(D_i)=D$. In this case, the pushforward measures for this subsequence converge to $\mu_D$ defined in Appendix \ref{smallshift}.
\end{conj}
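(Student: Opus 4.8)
The plan is to follow the geometric strategy outlined in the $\PGL_2$ case, but now on the Jacobian of $C_i$ we must study the cohomology of the intersection of two translates of theta divisors (special-divisor loci). Writing $c, c'$ for the two parities/twists, the relevant locus controlling the joint distribution is
\begin{equation*}
\Theta_{g+1-2c} \cap \left( \Theta_{g+1-2c'} \otimes L_i^{-1} \right) \subset \mathrm{Pic}(C_i),
\end{equation*}
whose point count, normalized by $\#_q \mathrm{Pic}(C_i)$, is exactly the joint measure of a box in $\Bun_2 \times \Bun_2$. First I would reduce the conjecture, via the Grothendieck--Lefschetz trace formula, to two inputs about these intersection loci: (i) that their top cohomologies (in degrees $> g$, after accounting for the codimension shift) agree with the corresponding cohomology of $\mathrm{Pic}(C_i)$, suitably Tate-twisted; and (ii) that the sum of all their Betti numbers grows at most like $N^{g}$ for a fixed constant $N$. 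Input (i) is asserted to be proven later in the paper ("the upper cohomologies of these loci agree with the cohomology of the Jacobian"), and input (ii) is the characteristic-zero theorem / characteristic-$p$ conjecture flagged in the abstract; so the real content of the present statement is the combinatorial/arithmetic bookkeeping that turns these cohomological facts into the stated trichotomy of limit measures.

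Next I would separate the two cases according to whether the $L_i$ escape every $\Theta_N$. In the generic case (such $A(N)$ exist), the translate $L_i^{-1}$ is, cohomologically, "as far from special as possible," and the intersection behaves like a transverse intersection of two ample complete intersections; the upper cohomology of the intersection is then forced to match $H^{\ge \bullet}$ of $\mathrm{Pic}(C_i)$ with the product Tate twist, and the trace formula yields that the joint measure factors as a product of the two marginal measures from Theorem~\ref{thm:theta1} — i.e. the two $\Bun_2$-coordinates become independent. The only subtlety is the parity constraint coming from $\det: X_{\PGL_2 \times \PGL_2} \to (\mathrm{Cl}(k)/2)^2$: the degree of $L_i$ mod $2$ (equivalently, whether $L_i \in \Pic^{\mathrm{even}}$ or $\Pic^{\mathrm{odd}}$ after fixing the base point) determines whether the pushforwards land in the diagonal components (0) or the off-diagonal components (1). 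So I would track $\deg L_i \bmod 2$ along the subsequence, pass to a subsequence on which it is constant, and read off which of (0) or (1) occurs; equidistribution within the chosen component follows from independence of the marginals plus Theorem~\ref{thm:theta1}.

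For the non-generic case — when $L_i \in \Theta_{N}$ for some fixed $N$ along a subsequence — I would use the explicit hyperelliptic description: $\Theta_N$ is the Abel--Jacobi image of $C_i^{(N)}$, so $L_i \cong \oO_{C_i}(E_i)$ for an effective divisor $E_i$ of bounded degree $\le N$. Passing to a further subsequence, the pushforward $\pi_*(E_i)$ is a divisor on $\PP^1$ of bounded degree, hence (finitely many possibilities, up to the hyperelliptic involution's ambiguity at branch points) may be taken constant, equal to some fixed effective $D$ on $\PP^1$; replacing $E_i$ by its "support over $D$" part one arranges $L_i \cong \oO_{C_i}(D_i)$ with $\pi_* D_i = D$. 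Then the twisting map $M \mapsto (\pi_* M, \pi_*(M \otimes \oO(-D_i)))$ is a small, bounded perturbation of the diagonal, and the limiting measure is the measure $\mu_D$ constructed in Appendix~\ref{smallshift} precisely as the pushforward under this $D$-shift; here one invokes Theorem~\ref{thm:theta1} for the $M$-variable and checks that the shift is "continuous in the limit" because $D$ has bounded degree. The main obstacle is honestly input (ii) in characteristic $p$ — the exponential bound on Betti numbers of the intersection loci — which is exactly why the overall statement is a conjecture rather than a theorem; everything else (the reduction via the trace formula, the parity bookkeeping, and the bounded-degree compactness argument in the special case) is, modulo care, routine given the cohomological comparison and the $\PGL_2$ result.
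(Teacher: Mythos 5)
There is a genuine gap in your case division. You split into ``the $A(N)$ exist'' (handled geometrically) versus ``$L_i\in\Theta_N$ for a fixed $N$ along a subsequence'' (handled by a bounded-shift argument), but the geometric method only works in a strictly smaller regime than your first case. Concretely, the trace-formula argument produces an error term of the shape $O(N^g q^{-r/4})$ where $r$ is the largest integer with $L_i\notin\Theta_r$ (equivalently, one needs $e(L_i)\le g-a-b$ \emph{and} $r$ growing linearly in $g$ for the error to vanish for $q>N^{4/\epsilon}$). If the $A(N)$ exist but grow slowly --- say $L_i\in\Theta_{\epsilon g(C_i)}$ for every $\epsilon>0$ eventually, while still escaping each fixed $\Theta_N$ --- then $r=r(i)=o(g_i)$ and the error term $N^{g}q^{-r/4}$ blows up. The paper's actual dichotomy is therefore $\Mm_i\in\Theta_{\epsilon g}$ versus $\Mm_i\notin\Theta_{\epsilon g}$: only the second case is treated by the cohomological comparison plus the Betti-number bound, while the first case (which subsumes both your ``slowly growing $A(N)$'' regime and the $\mu_D$ regime) is handled by an entirely different input, namely the function-field analogue of the ergodic argument of Ellenberg--Michel--Venkatesh, using a split place of degree $O(\log g)$ supplied by the Weil conjectures. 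Your proposal contains no substitute for this ergodic step, so the intermediate regime is simply unproven in your outline.

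Relatedly, your treatment of the $\mu_D$ case is too weak. Knowing that $M\mapsto(\pi_*M,\pi_*(M\otimes\oO(-D_i)))$ is a ``bounded perturbation of the diagonal'' and that each marginal equidistributes by Theorem \ref{thm:theta1} does not determine the joint limit: the whole difficulty of the two-variable problem is precisely the correlation between the two coordinates, and the limit here is \emph{not} a product measure but the Hecke measure $\mu_D$, i.e.\ the pushforward of the natural measure on $X_0(D)$. To identify the limit as $\mu_D$ you must prove equidistribution of the torus orbit in $X_0(D)$ itself (a finer quotient than $\Bun_2\times\Bun_2$), which again is exactly what the ergodic (or, alternatively, Waldspurger-formula) input supplies in the paper. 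The geometric half of your proposal --- reduction via Grothendieck--Lefschetz, the cohomological comparison, the conjectural exponential Betti bound, and the parity bookkeeping via $\deg L_i \bmod 2$ --- does match the paper's Case 2, but as written the argument does not cover the full statement of the conjecture.
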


Ergodic techniques suffice when the $A(N)$ grow slowly to infinity compared with the genus of the curve, or don't exist.
To establish the statement it remains to study the case $L_i \notin \Theta_{\alpha g(C_i)}$ for any constant $1>\alpha>0$.
We turn to geometry.

The assertion of Conjecture \ref{conj:thetatheta} unpacks as before to statements of the form
\[
\lim\limits_{g\to \infty}
\frac{\#_q (L \Theta_{g-1-2c} \cap \Theta_{g-1-2d}) }{\#_q
\mathrm{Pic}^{g-1}(C)} = q^{-2-2c-2d}  \]
To establish these it would suffice to equate the higher cohomology groups and bound the lower ones. We can accomplish
the first:

\begin{thm} \label{thm:comp}
Fix $L \in J^{2g-a-b}$ and $r > a + b$.  $L \notin \Theta_r$, we have
$$\HC^i(\Theta_{g-a} \cap L-\Theta_{g-b}, \Qlbar) \cong \HC^{i+2a + 2b}(J,\Qlbar)(a+b) \,\,\,\,\,\,\,\,\, \mathrm{for}\,\,\,\,  i > 2g - 2a - 2b - \frac{r - a - b}{2}$$
\end{thm}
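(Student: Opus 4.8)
The plan is to exhibit $Y := \Theta_{g-a}\cap(L-\Theta_{g-b})$ as, set-theoretically, a complete intersection of $a+b$ ample divisors in the abelian variety $J$, and then to force its cohomology in the top range by combining the Lefschetz hyperplane theorem (valid for set-theoretic complete intersections of ample divisors with no smoothness hypothesis) with Poincar\'e duality on $Y$ and on $J$. The Tate twist and the degree shift $2a+2b$ are exactly what these two dualities produce.

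\emph{Geometric input.} First I would recall the explicit hyperelliptic description of the loci $\Theta_m$ already used in the $\PGL_2$ case: up to translation $\Theta_{g-a}$ is the Brill--Noether locus $W_{g-a}$, of dimension $g-a$, and it is set-theoretically cut out in $J$ by $a$ ample divisors (translates of $\Theta$ by multiples of the hyperelliptic class). Since translates of ample divisors are ample and $-1$ preserves ampleness on an abelian variety, $L-\Theta_{g-b}$ is likewise set-theoretically a complete intersection of $b$ ample divisors. It then remains to see that $Y$ has the expected dimension $g-a-b$, equivalently that the $a$ divisors defining $\Theta_{g-a}$ and the $b$ defining $L-\Theta_{g-b}$ meet properly; this is precisely where $L\notin\Theta_r$ enters. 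Once this is known, the iterated hyperplane-section argument forces $\dim(D_1\cap\cdots\cap D_k)=g-k$ for every partial intersection, so $Y$ is a bona fide set-theoretic complete intersection of $a+b$ ample divisors.

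\emph{Cohomological assembly.} Two tools. (i) For a set-theoretic complete intersection of ample divisors in a smooth projective variety, the successive hyperplane-section complements are affine, so Artin vanishing and the long exact sequences of the pairs give the Lefschetz isomorphism $\HC^j(J,\Qlbar)\xrightarrow{\ \sim\ }\HC^j(Y,\Qlbar)$ for $j<g-a-b$, with $Y$ allowed to be singular. (ii) Using that the individual $\Theta_m$ are homology manifolds \cite{IY}, the locus where $\Theta_{g-a}$ and $L-\Theta_{g-b}$ fail to meet transversally — and hence the locus where $Y$ fails to be a homology manifold of dimension $g-a-b$ — is a closed subset of codimension bounded below by roughly $\tfrac{r-a-b}{2}$, again because $L\notin\Theta_r$; this yields a partial Poincar\'e duality $\HC^i(Y,\Qlbar)\cong \HC^{2(g-a-b)-i}(Y,\Qlbar)^\vee(-(g-a-b))$ for $i>2(g-a-b)-\tfrac{r-a-b}{2}$. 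Now for $i$ in the asserted range set $j=2(g-a-b)-i<\tfrac{r-a-b}{2}\le g-a-b$: partial duality on $Y$ rewrites $\HC^i(Y)$ as $\HC^j(Y)^\vee(-(g-a-b))$, the Lefschetz isomorphism replaces $\HC^j(Y)$ by $\HC^j(J)$, and Poincar\'e duality on the abelian variety $J$ turns $\HC^j(J)^\vee(-(g-a-b))$ into $\HC^{2g-j}(J)(a+b)=\HC^{i+2a+2b}(J)(a+b)$, which is the claim.

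The main obstacle is the pair of quantitative geometric statements driven by the hypothesis $L\notin\Theta_r$: that it makes $\Theta_{g-a}$ and $L-\Theta_{g-b}$ meet in the expected dimension (so that $Y$ is genuinely a complete intersection of ample divisors), and that it keeps $Y$ rationally smooth away from a locus of codimension $\gtrsim\tfrac{r-a-b}{2}$. Making this precise — relating the ``distance'' of $L$ from the special loci $\Theta_r$ to the codimension of the bad locus of the intersection — is the technical heart, and it is exactly what produces the cohomological range in the statement; everything else is the formal Lefschetz--Poincar\'e bookkeeping above.
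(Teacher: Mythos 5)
Your overall architecture coincides with the paper's: Lemma \ref{thetathetaislci} realizes $Y=\Theta_{g-a}\cap(L-\Theta_{g-b})$ as a set-theoretic intersection of $a+b$ translates of $\Theta_{g-1}$ of the expected dimension (the hypothesis $L\notin\Theta_r$ entering through the bound $e(L)<g-\tfrac{a+b+r}{2}$ and Corollary \ref{cor:dimestimate}); Corollary \ref{thetathetalef} is your Lefschetz step; and the ``partial Poincar\'e duality'' is implemented exactly along the lines you indicate, by comparing the constant sheaf (perverse because $Y$ is set-theoretically lci of the right dimension) with $\mathrm{IC}_Y$, observing that the kernel of the surjection $\Qlbar[g-a-b]\to\mathrm{IC}_Y$ is a perverse sheaf supported in dimension $\le e(L)$, and then invoking Poincar\'e duality for intersection cohomology (Theorem \ref{cohomology}). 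Your numerology and twist bookkeeping are also correct: $e(L)<g-\tfrac{a+b+r}{2}$ converts the range $i>g-a-b+e(L)$ into the range stated in Theorem \ref{thm:comp}.

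The genuine gap is the step you yourself flag as the technical heart: proving that $Y$ fails to be a homology manifold only on a locus of dimension $\le e(L)$. You propose to deduce this from transversality of $\Theta_{g-a}$ and $L-\Theta_{g-b}$, each being a homology manifold by Lemma \ref{lem:bm1}; but no transversality argument is available here. The bundle $L$ is fixed and possibly quite special, so Kleiman--Bertini does not apply, and transversality of the smooth loci would in any case say nothing about the behaviour along the singular strata of the two theta translates. The paper's actual mechanism is different: one pulls back to $C^{(g-a)}\times C^{(g-b)}$ along the Abel--Jacobi maps and studies the fibre of $\underline{A}$ over $L$. Corollary \ref{cor:dimestimate}, via the canonical decomposition of pairs of divisors, bounds the singular locus of that fibre by dimension $\le e(L)$; Lemma \ref{Esemismall} shows the map down to $Y$ is semismall away from dimension $<e(L)$, with relevant strata the $\Theta_{g-a-2l}\cap L-\Theta_{g-b-2r}$; and the Borho--MacPherson argument (as in the proof of Lemma \ref{lem:bm1}) then identifies the pushforward stratum by stratum, forcing $\Qlbar[g-a-b]\to\mathrm{IC}_Y$ to be an isomorphism off a set of dimension $\le e(L)$ (Proposition \ref{lem:bm2}). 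Without this input---or a genuine substitute for it---your partial duality in the range $i>2(g-a-b)-\tfrac{r-a-b}{2}$ is not established, and that is precisely the nontrivial half of the theorem.
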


Indeed, while these intersections fail to be homology manifolds, we can confine
the failure to high enough codimension to prove the above result.
To conclude a comparison on point counts, we require a bound on the lower cohomologies.

\begin{conj} \label{ibound} There exists a universal constant $N$ such that for  $g \gg c, d$,
we have the bound $$\dim \HC^*(L \Theta_{g-1-2c} \cap \Theta_{g-1-2d}) < N^g$$ \end{conj}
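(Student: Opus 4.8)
The plan is to bound the Betti numbers of $L\Theta_{g-1-2c} \cap \Theta_{g-1-2d}$ by exhibiting these loci inside a family to which one can apply uniform estimates. Since $C$ is hyperelliptic, $\Theta_{g-1-2c}$ is the image of the Abel–Jacobi map from $C^{(g-1-2c)}$, hence of dimension $g-1-2c$; intersecting with the translate $L\Theta_{g-1-2d}$ gives something of expected dimension $g-2-2c-2d$. The first step is to realize the intersection set-theoretically as a determinantal (or incidence) variety: a point $\mathcal L \in J^{g-1}$ lies in $\Theta_{g-1-2c}$ iff $h^0(C,\mathcal L) \ge c+1$, which is a rank condition on a Brill–Noether matrix (a cohomology map between vector bundles on a fixed base, built from a Poincaré bundle). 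So the intersection is cut out inside $J \times J$-type spaces by two simultaneous rank conditions. One then wants to stratify this by the exact values of $h^0$ on each factor and bound the cohomology stratum by stratum, using that each stratum is a degeneracy locus of controlled expected dimension.

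The key technical input I would use is an effective bound on the sum of Betti numbers of a variety in terms of the number and degrees of the defining equations — e.g. the Milnor–Thom / Katz-style bounds: if $X \subseteq \PP^M$ (or an affine chart) is defined by $r$ equations of degree $\le D$, then $\sum_i \dim \HC^i(X,\Qlbar) \le C(M,r,D)$ with an explicit $C$. The whole difficulty is that naively $M$, $r$, $D$ all grow with $g$ (the symmetric power $C^{(g-1-2c)}$ has dimension $\sim g$, and the Abel–Jacobi/theta equations have degree growing with $g$), so a black-box application only gives a bound of the shape $g^{O(g)}$ or worse, not $N^g$. To get the sharper $N^g$ one must exploit the special geometry: the loci are \emph{translates of theta divisors' secant-type loci}, and $J$ itself has $\sum_i b_i(J) = 2^{2g} = 4^g$. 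So the strategy is to fiber $L\Theta_{g-1-2c}\cap\Theta_{g-1-2d} \to J$ via $(\mathcal L) \mapsto \mathcal L$ and bound the cohomology via the Leray spectral sequence: the base contributes $4^g$, and the fibers — which are intersections of \emph{linear sections} of symmetric powers over a point, i.e. projective spaces $\PP H^0(\mathcal L)$ intersected with translates — have boundedly many cohomology classes per stratum, with the number of strata growing only polynomially in $g$ (indexed by pairs $(h^0(\mathcal L), h^0(L^{-1}\otim\mathcal L))$, at most $O(g^2)$ of them). Multiplying $4^g$ by a polynomial factor still sits under $N^g$ for any $N > 4$.

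The main obstacle — and the reason this is only conjectural in characteristic $p$ — is precisely controlling the fiberwise contribution: one needs that the \emph{perverse} or ordinary cohomology of the Brill–Noether degeneracy loci does not blow up faster than exponentially, and in characteristic zero this follows from Theorem \ref{thm:comp} together with the homology-manifold structure and a Lefschetz-type argument (comparing with $\HC^*(J)$ in the top range and bounding the bounded-below range by the same Milnor-type estimate applied to the \emph{low-dimensional} residual strata, which are the only ones where the comparison theorem fails). In characteristic $p$ the Lefschetz/homology-manifold machinery controlling the lower cohomology is not available in the form we need, so the exponential bound on the residual strata — equivalently, the statement that the ``defect'' between $\HC^*(L\Theta_{g-1-2c}\cap\Theta_{g-1-2d})$ and $\HC^*(J)[\cdots]$ is supported in bounded codimension with boundedly-growing total dimension — must be assumed. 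I would structure the write-up so that everything except this last codimension/Betti-growth estimate is unconditional, isolating the conjectural content to a single clean statement about degeneracy loci of Brill–Noether maps on hyperelliptic Jacobians.
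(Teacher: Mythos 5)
There is a genuine gap at the central step of your argument. Fibering $L\Theta_{g-1-2c}\cap\Theta_{g-1-2d}$ over $J$ by $\mathcal L\mapsto\mathcal L$ and invoking Leray gives nothing: the intersection is already a closed subvariety of $J$, so this map is an inclusion with point fibers, and there is no a priori reason the cohomology of a subvariety is bounded by that of the ambient abelian variety times a polynomial. If what you intend is the stratification by the pairs $(h^0(\mathcal L),h^0(L\otimes\mathcal L^{-1}))$ together with the projective-bundle structure of the Abel--Jacobi fibers over each stratum, this only trades the problem for bounding $\HC^*$ of the preimage $(A\times A)^{-1}\Sigma^{-1}(L)\subset C^{(g-1-2c)}\times C^{(g-1-2d)}$, which is a singular fiber of the addition map $\underline{A}$ whose Betti numbers are exactly as hard to control as the original ones. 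The Lefschetz comparison and Theorem \ref{thm:comp} pin down only the degrees below the dimension and above the dimension plus $e(L)$; the entire content of the conjecture is the middle range, where the degree $g!$ of the theta divisor makes every B\'ezout/Katz-type bound hopeless, as the paper itself points out. Your assertion that the characteristic-zero case ``follows from Theorem \ref{thm:comp} together with the homology-manifold structure and a Lefschetz-type argument'' is also not correct: the intersections are explicitly \emph{not} homology manifolds, and the characteristic-zero proof (Theorem \ref{thm:bound}) is an independent and much more elaborate argument.

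For contrast, that argument runs as follows: one applies the decomposition theorem to $\Sigma:\Theta_{g-a}\times\Theta_{g-b}\to J$ (legitimate because the \emph{source} is a homology manifold), reduces via relative Hard Lefschetz and the $4^g$ bound on $\HC^*(J)$ to bounding the stalks of ${}^pR^0\Sigma_*\Qlbar$, and then bounds those stalks by Massey's Morse-type inequalities in terms of polar multiplicities of the varieties occurring in the Euler-obstruction expansion of the constructible function $\Sigma_*1$. The three quantitative inputs are: the Euler characteristic of a general fiber of $\underline A$ is at most $8^g$ (an intersection computation on $C^{(g-a)}\times C^{(g-b)}$); the higher discriminants of $\underline A$ have components among the loci $\Theta_t+\mathbf{2}\Theta_u$ with coefficients at most $10^g$, obtained by counting ordinary double points in a transverse slice; and the polar multiplicities of those loci are at most $g^2 96^g$, obtained by intersecting polar varieties with translates of theta divisors and using nefness of $TJ$. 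None of these has an analogue in your sketch, and the reliance on Euler obstructions and polar varieties is precisely why the bound is only established in characteristic zero and the statement remains a conjecture in characteristic $p$.
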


Note that the degree of $\Theta_{g-1}$ is $g!$.  Thus the Bombieri-Katz bound
$\dim \HC^*(\Theta_{g-1}) \ll  (g!)^{g-1}$ is not even close.   Nonetheless,
we can prove the analogous result in characteristic zero.

\begin{thm} \label{thm:bound} Conjecture \ref{ibound} holds in characteristic zero. \end{thm}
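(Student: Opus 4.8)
I would fix an embedding of the ground field into $\C$ and work with singular cohomology; since $C$ has characteristic zero, this frees me to use generic smoothness, resolution of singularities, and the decomposition theorem. Write $m=g-1-2c$, $n=g-1-2d$, and note that for $g\gg c,d$ one has $m,n<g$ and $m+n\ge g$.

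\emph{Step 1: reduce to a fibre of an Abel--Jacobi difference map.} The locus $X:=L\,\Theta_{g-1-2c}\cap\Theta_{g-1-2d}\subset\Pic(C)$ is the image of
\[
Z:=\{(D_1,D_2)\in C^{(m)}\times C^{(n)}:L\otimes\oO(D_1)\cong\oO(D_2)\},
\]
under $(D_1,D_2)\mapsto[\oO(D_2)]$, and this map $f\colon Z\to X$ is proper and surjective with every fibre a product of projective spaces $\PP^{a}\times\PP^{b}$; in particular $f$ has connected fibres with $\HC^0=\Qlbar$, so $\Qlbar_X$ splits off $Rf_*\Qlbar_Z$ and the decomposition theorem gives that $\HC^*(X,\Qlbar)$ is a direct summand of $\HC^*(Z,\Qlbar)$. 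Thus it suffices to bound $\dim\HC^*(Z)$. Now $Z$ is the fibre over the point $[L]$ of the proper map $\phi\colon Y:=C^{(m)}\times C^{(n)}\to J$, $(D_1,D_2)\mapsto\oO(D_2)-\oO(D_1)$, whose target has dimension $g$ and which is surjective because $m+n\ge g$.

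\emph{Step 2: the quantitative input.} Macdonald's computation of the Poincar\'e polynomials of symmetric products of a curve gives $\dim\HC^*(C^{(r)})\le(r+1)2^{2g}$, so $\dim\HC^*(C^{(r)})<5^{g}$ whenever $r\le g$ and $g$ is large; by K\"unneth $\dim\HC^*(Y)<25^{g}$. This exponential (rather than factorial) bound is the crucial point the naive degree estimates miss.

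\emph{Step 3: distribute along $\phi$, and the main obstacle.} I would apply the decomposition theorem to $\phi$, writing $R\phi_*\Qlbar_Y[\dim Y]$ as a direct sum of shifted summands $IC_{\overline{S_\alpha}}(\Ll_\alpha)$ with $\overline{S_\alpha}\subseteq J$, where $\sum_\alpha\dim IH^*(\overline{S_\alpha},\Ll_\alpha)=\dim\HC^*(Y)<25^{g}$. By proper base change, $\dim\HC^*(Z)=\sum_\alpha\dim\mathcal H^*\big(IC_{\overline{S_\alpha}}(\Ll_\alpha)\big)_{[L]}$, so there are at most $25^{g}$ summands, each of rank at most $25^{g}$, and it remains to bound the intersection-cohomology stalk of a single $\overline{S_\alpha}$ at a point by $C^{g}$ for a universal $C$. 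This is where I expect the real work to lie, and where being hyperelliptic is used: every special linear system on $C$ is a multiple of the $g^1_2$ plus a moving part, so the loci where the fibre dimension of $\phi$ jumps -- hence the supports $\overline{S_\alpha}$ -- are translates of explicit Brill--Noether subvarieties $W_a-W_b-e\,\mathfrak g\subseteq J$, each an image of $C^{(a)}\times C^{(b)}$ (with $a,b\le g$, hence $\dim\HC^*<25^{g}$) under a map with projective-space fibres. Resolving $\overline{S_\alpha}$ by such a product and invoking the decomposition theorem once more bounds the IC stalk by $\dim\HC^*(C^{(a)}\times C^{(b)})$ times a combinatorial factor counting strata, which for these explicitly stratified loci is polynomial in $g$. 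Multiplying the three $25^{g}$-type bounds and the polynomial factor yields $\dim\HC^*(Z)<N^{g}$ for a universal $N$, and hence the same bound for $\dim\HC^*(X)$. The delicate points -- classifying the supports $\overline{S_\alpha}$ precisely enough, and controlling the local Betti numbers of resolutions of these Brill--Noether loci by $C^g$ rather than by a factorial-type quantity -- are precisely what fails in characteristic $p$ (no generic smoothness, no resolution, no decomposition theorem available in the same form), which is why Conjecture~\ref{ibound} is only conjectural there.
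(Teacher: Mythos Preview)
Your overall architecture --- reduce to a fibre of the Abel difference map, apply the decomposition theorem to $\phi: C^{(m)} \times C^{(n)} \to J$, and bound stalks of the resulting IC summands --- parallels the paper's, but Step 3 contains a genuine gap. You write that resolving $\overline{S_\alpha}$ by a product of symmetric powers and invoking the decomposition theorem once more bounds the IC stalk by $\dim \HC^*(C^{(a)}\times C^{(b)})$; but the decomposition theorem bounds the IC stalk at a point $p$ only by the cohomology of the \emph{fibre} of the resolution over $p$, not by the cohomology of the total space. Your identification of the supports is also incorrect: the relevant supports are the higher discriminants $\Theta_r + {\bf 2}\Theta_s$, involving the multiplication-by-$2$ map on $J$ (arising from the $2R_\cap$ in the canonical decomposition); your $W_a - W_b - e\mathfrak{g}$ misses this doubling. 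The natural map $C^{(r)}\times C^{(s)} \to \Theta_r + {\bf 2}\Theta_s$ does not have projective-space fibres over special points, and no soft argument bounds those fibres' Betti numbers exponentially in $g$. The recursion you sketch does not visibly terminate with an exponential bound rather than a factorial one --- which is exactly the quantitative distinction the conjecture is about.

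The paper fills this gap by an entirely different mechanism. Massey's Morse-type inequalities bound $\dim\Hh^{-i}(K)_p$ for a perverse sheaf $K$ by the polar multiplicities $\gamma^i_{V_\alpha}(p)$ of the varieties $V_\alpha$ appearing in the Euler-obstruction expansion $\chi(K_p)=\sum n_\alpha\,\mathrm{Eu}_{V_\alpha}(p)$. The work then becomes: (i) identifying the $V_\alpha$ as the higher discriminants $\Theta_r + {\bf 2}\Theta_s$ of $\underline{A}$ via the theory of \cite{MS2}; (ii) bounding the coefficients $n_\alpha$ by $10^g$ through an explicit local computation showing the relevant singularities are ordinary double points; and (iii) bounding the polar multiplicities of these supports by $g^2\cdot 96^g$ via an intersection-theoretic calculation in the Jacobian. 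None of (i)--(iii) is replaced by anything in your sketch. Finally, your diagnosis of the characteristic-$p$ obstruction is off: the decomposition theorem and the explicit symmetric-product resolutions are available in any characteristic; what is genuinely missing in positive characteristic is the Euler-obstruction and polar-multiplicity machinery (integration against Euler characteristic, the hyperplane formula for $\mathrm{Eu}$) on which Massey's inequalities rest.
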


Our approach to bound the cohomology rests on the theory of higher discriminants \cite{MS2} and the relation between
vanishing cycles and polar varieties \cite{LT, Ma}.  The key tool in these works is integration with respect to
Euler characteristic in general and  the hyperplane formula for the local Euler obstruction in particular.  Unfortunately
this theory has not been extended to characteristic $p$, and the ignorance of the authors does not allow us to
indicate whether this is a technical or essential limitation.

The organization of this paper is as follows.
In Section \ref{sec:geom} we recall basic facts about the Abel-Jacobi map, noting in particular that for hyperelliptic curves,
the Gauss map  $C^{(d)} \dashrightarrow \mathrm{Gr}(d, T_0 J(C))$  extends to a morphism.
In Section \ref{sec:top}, which is the technical heart of the paper,
we study the cohomology of intersections in the Jacobian of loci of special divisors. This Section contains the proof of
Theorems \ref{thm:comp} and \ref{thm:bound}.
In Section \ref{sec:eq} we prove Theorem \ref{thm:theta1}, and, assuming Conjecture \ref{ibound}, prove \ref{conj:thetatheta}.
Finally,  Appendix \ref{sec:adeles} explains the relation between the adelic and geometric formulations of the equidistribution statements.


\vspace{2mm} \noindent {\bf Acknowledgements}.  We thank Ali Altug, Pierre Deligne, Alexandru Dimca, Steven Kleiman, Robert Lazarsfeld, Luca Migliorini, J\"org Sch\"urmann, and the user
``ulrich'' on mathoverflow for helpful conversations, correspondence, and/or counterexamples.

\section{Geometry of special divisors on hyperelliptic curves} \label{sec:geom}

\subsection{The Abel-Jacobi map}
This subsection contains well known facts about curves, as can be found in \cite[Chapter 1]{ACGH}.

Let $C$ be a smooth curve. We write $C^{(d)} = C^d / \mathfrak{S}_d$ for the $d$'th symmetric product, and $J_d$ for the moduli space of
degree $d$ line bundles on $C$.  Their tangent spaces are given by $T_D C^{(d)} =
\HC^0(D, \oO_D(D))$ and $T_L J = \HC^1(C, \oO_C)$.

\begin{lemma}\label{lem:A} Let $A: C^{(k)}\rightarrow J_k$ be the Abel-Jacobi map $D \mapsto \oO_C(D)$. Then $dA: T_{D} C^{(k)} \to T_{A(D)} J_k$ is the natural map $\HC^0(D, \oO_D(D)) \to
\HC^1(C, \oO_C)$ obtained by taking cohomology of the sequence $$0 \to \oO_C \to \oO_C(D) \to \oO_D(D) \to 0$$ thus its image is the kernel of the surjective map $\HC^1(C, \oO_C) \to
\HC^1(C, \oO_C(D))$ and has dimension $$\dim dA(T_D C^{(k)}) =  g - \dim \HC^1(C,D) = k+1 - \dim \HC^0(C,D).$$
\end{lemma}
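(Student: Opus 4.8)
The plan is to identify $dA$ with the connecting homomorphism $\delta\colon\HC^0(D,\oO_D(D))\to\HC^1(C,\oO_C)$ of the sheaf sequence $0\to\oO_C\to\oO_C(D)\to\oO_D(D)\to0$; once this is done, the description of the image is exactness of the associated long exact sequence and the dimension formula is Riemann--Roch. I would begin by recalling the deformation-theoretic content of the identifications $T_DC^{(k)}=\HC^0(D,\oO_D(D))$ and $T_LJ_k=\HC^1(C,\oO_C)$ quoted above. A tangent vector $v\in T_DC^{(k)}$ is a morphism $\mathrm{Spec}\,k[\epsilon]\to C^{(k)}$ carrying the closed point to $D$, equivalently an effective relative Cartier divisor $\mathcal D\subset C\times\mathrm{Spec}\,k[\epsilon]$, flat over $k[\epsilon]$, with $\mathcal D|_{\epsilon=0}=D$. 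Fixing an affine cover $\{U_\alpha\}$ of $C$ with $D\cap U_\alpha=\{f_\alpha=0\}$, the divisor $\mathcal D$ is cut out over $U_\alpha$ by $f_\alpha+\epsilon g_\alpha$ for suitable $g_\alpha\in\oO(U_\alpha)$, and the section of $\oO_D(D)$ corresponding to $v$ is the one represented, in the trivialization $1/f_\alpha$ of $\oO_C(D)|_{U_\alpha}$, by $g_\alpha \bmod f_\alpha$; the local functions $g_\alpha/f_\alpha$ have matching principal parts along $D$ on overlaps exactly because $\mathcal D$ is a well-defined subscheme, so they glue. Likewise a tangent vector to $J_k$ at $L$ is a line bundle on $C\times\mathrm{Spec}\,k[\epsilon]$ restricting to $L$, and such deformations are classified by $\HC^1(C,\oO_C)$ through the exact sequence of sheaves of abelian groups on $C$ with middle term $\oO^\times_{C\times k[\epsilon]}$, the sub being $\oO_C$ embedded by $u\mapsto 1+\epsilon u$.

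Next I would compute $dA(v)$ in these coordinates. The line bundle $A(\mathcal D)=\oO_{C\times k[\epsilon]}(\mathcal D)$ has transition functions
$$\frac{f_\alpha+\epsilon g_\alpha}{f_\beta+\epsilon g_\beta}=\frac{f_\alpha}{f_\beta}\Bigl(1+\epsilon\bigl(\tfrac{g_\alpha}{f_\alpha}-\tfrac{g_\beta}{f_\beta}\bigr)\Bigr),$$
so its class in $\HC^1(C,\oO_C)$ is the \v{C}ech $1$-cocycle $\{g_\alpha/f_\alpha-g_\beta/f_\beta\}$, whose entries lie in $\oO_C$ because the polar parts along $D$ cancel on overlaps. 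On the other hand, $\delta$ is computed \v{C}ech-theoretically by lifting a section of $\oO_D(D)$ to local sections of $\oO_C(D)$ and taking the coboundary: our section lifts on $U_\alpha$ to $g_\alpha/f_\alpha$, so $\delta$ sends it to $\{g_\alpha/f_\alpha-g_\beta/f_\beta\}$ as well. Hence $dA=\delta$. Since $D$ is zero-dimensional we have $\HC^1(D,\oO_D(D))=0$, so the long exact sequence attached to $0\to\oO_C\to\oO_C(D)\to\oO_D(D)\to0$ shows that $\HC^1(C,\oO_C)\to\HC^1(C,\oO_C(D))$ is surjective with kernel $\mathrm{im}(\delta)=\mathrm{im}(dA)$, proving the second assertion; in particular $\dim\mathrm{im}(dA)=\dim\HC^1(C,\oO_C)-\dim\HC^1(C,\oO_C(D))=g-\dim\HC^1(C,D)$. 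Finally Riemann--Roch, $\dim\HC^0(C,D)-\dim\HC^1(C,D)=k+1-g$, rewrites the last expression as $k+1-\dim\HC^0(C,D)$.

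The one step that requires genuine care is the middle one: matching the section of $\oO_D(D)$ produced by a first-order deformation of $D$ with the section that $\delta$ carries to the \v{C}ech class of the deformed line bundle $\oO(\mathcal D)$, i.e.\ keeping the trivializations and signs consistent. Everything else is formal. Alternatively, since this is standard, one could simply cite \cite[Ch.~1]{ACGH} for the equality $dA=\delta$ and supply only the Riemann--Roch dimension count.
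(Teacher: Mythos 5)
Your proof is correct and is exactly the standard argument: the paper itself gives no proof of this lemma, merely filing it under ``well known facts \dots as can be found in \cite[Chapter 1]{ACGH},'' and the \v{C}ech computation you carry out (identifying $dA$ with the connecting homomorphism via the transition functions $\frac{f_\alpha+\epsilon g_\alpha}{f_\beta+\epsilon g_\beta}$, then invoking $\HC^1(D,\oO_D(D))=0$ and Riemann--Roch) is precisely the argument that reference supplies. Your closing remark that one could simply cite \cite[Ch.~1]{ACGH} is in fact what the authors do.
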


For a collection of effective divisors $D_\alpha = \sum_P n_{P,\alpha} \cdot P$ we write \begin{eqnarray*} \bigcap D_\alpha & := & \sum_P (\min_\alpha n_{\alpha,P}) \cdot P \\ \bigcup
D_\alpha & := & \sum_P (\max_\alpha n_{\alpha,P}) \cdot P \end{eqnarray*}

\begin{corollary} \label{cor:A} Consider the addition map $A: C^{(d_1)} \times \cdots \times C^{(d_n)} \to J$. Let $(D_1, \ldots, D_n) \in C^{(d_1)} \times \cdots \times C^{(d_n)}$. Then
\[dA( T_{(D_1, \ldots, D_n)} \prod C^{(d_i)}  ) = dA(T_{\bigcup D_i} C^{(\deg \bigcup D_i)}) \] \end{corollary}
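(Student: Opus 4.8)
The plan is to reduce the corollary, via the chain rule and Lemma~\ref{lem:A}, to an elementary identity about spaces of holomorphic differentials vanishing along divisors. For an effective divisor $D$ on $C$ set
$$K_D := \ker\!\bigl(\HC^1(C,\oO_C)\to \HC^1(C,\oO_C(D))\bigr)\subseteq \HC^1(C,\oO_C),$$
so that Lemma~\ref{lem:A} says precisely $dA\bigl(T_D C^{(\deg D)}\bigr)=K_D$, where $A$ denotes the Abel--Jacobi map on the relevant symmetric product. First I would note that the addition map factors as
$$C^{(d_1)}\times\cdots\times C^{(d_n)}\;\xrightarrow{\;A\times\cdots\times A\;}\;J_{d_1}\times\cdots\times J_{d_n}\;\xrightarrow{\;\otimes\;}\;J,$$
and that the differential of the tensor--product map, read through the canonical identifications $T_L J_d=\HC^1(C,\oO_C)$, is the sum map $\HC^1(C,\oO_C)^{\oplus n}\to \HC^1(C,\oO_C)$ --- this being the differential of the group law on $\Pic$, for which the tensor map is equivariant over $J_0^n\to J_0$. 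By the chain rule, $dA$ of the addition map at $(D_1,\dots,D_n)$ sends $(v_1,\dots,v_n)$ to $\sum_i dA(v_i)$, so its image is $\sum_{i=1}^n K_{D_i}$; meanwhile, again by Lemma~\ref{lem:A}, the right-hand side of the corollary equals $K_{\bigcup_i D_i}$. Thus the corollary reduces to the identity of subspaces of $\HC^1(C,\oO_C)$
$$\sum_{i=1}^{n}K_{D_i}\;=\;K_{\bigcup_i D_i}.\qquad(\star)$$

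To prove $(\star)$ I would dualize via Serre duality: $\HC^1(C,\oO_C)=\HC^0(C,\omega_C)^{\vee}$, $\HC^1(C,\oO_C(D))=\HC^0(C,\omega_C(-D))^{\vee}$, and the restriction map $\HC^1(C,\oO_C)\to \HC^1(C,\oO_C(D))$ is transpose to the inclusion $\HC^0(C,\omega_C(-D))\hookrightarrow \HC^0(C,\omega_C)$ of the differentials vanishing along $D$. Hence $K_D=\mathrm{Ann}\,\HC^0(C,\omega_C(-D))$, and since $\sum_i\mathrm{Ann}(V_i)=\mathrm{Ann}\bigl(\bigcap_i V_i\bigr)$ for subspaces of a finite-dimensional vector space, $(\star)$ is equivalent, after taking annihilators once more, to
$$\bigcap_{i=1}^{n}\HC^0(C,\omega_C(-D_i))\;=\;\HC^0\bigl(C,\omega_C(-{\textstyle\bigcup_i} D_i)\bigr)$$
as subspaces of $\HC^0(C,\omega_C)$. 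This last identity is immediate from the definition of $\bigcup_i D_i$: a differential $s$ lies in $\HC^0(C,\omega_C(-D_i))$ exactly when $\mathrm{div}(s)\ge D_i$, hence lies in all of them exactly when $\mathrm{div}(s)\ge \bigcup_i D_i$, i.e.\ when $s\in \HC^0(C,\omega_C(-\bigcup_i D_i))$.

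The only step carrying real content is $(\star)$, and within it only the inclusion $\supseteq$: the inclusion $\subseteq$ is formal, since $\oO_C(D_i)\hookrightarrow \oO_C(\bigcup_j D_j)$ factors $\HC^1(C,\oO_C)\to \HC^1(C,\oO_C(D_i))$ through $\HC^1(C,\oO_C)\to \HC^1(C,\oO_C(\bigcup_j D_j))$, so $K_{D_i}\subseteq K_{\bigcup_j D_j}$. The only point I would take care to cite precisely is the Serre-duality compatibility --- that the restriction map on $\HC^1$ is transpose to the inclusion on $\HC^0$ of the dual sheaves; beyond that the argument is linear algebra together with the chain rule, so I anticipate no serious obstacle.
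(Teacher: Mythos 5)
Your proof is correct, and it is exactly the deduction the paper leaves implicit: the paper states this corollary with no proof, treating it as immediate from Lemma \ref{lem:A}, and your argument (image of the differential of the sum map is $\sum_i K_{D_i}$, then $\sum_i K_{D_i}=K_{\bigcup_i D_i}$ by Serre duality and the identity $\sum_i\mathrm{Ann}(V_i)=\mathrm{Ann}(\bigcap_i V_i)$) is the standard way to fill in that gap.
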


\begin{proposition} \label{prop:finite} Let $p_1, \ldots, p_n$ be distinct points such that $\dim \HC^0(C, \oO(\sum p_i) ) = 1$.  Let $d_1, \ldots, d_n$ be any positive integers.  Then any
nontrivial deformation of $(p_1, \ldots, p_n)$ induces a nontrivial deformation of the line bundle $\oO(\sum d_i p_i)$. \end{proposition}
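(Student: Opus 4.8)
I plan to prove the equivalent infinitesimal assertion: that the differential at $(p_1,\dots,p_n)$ of
$$\psi\colon C^{n}\longrightarrow J,\qquad (q_1,\dots,q_n)\longmapsto \oO_C\Bigl(\sum_i d_i q_i\Bigr)$$
is injective. Since $T_{(p_i)}C^{n}=\bigoplus_i T_{p_i}C$ records the first-order deformations of the tuple and $T_{\psi(p_i)}J=\HC^1(C,\oO_C)$ those of the line bundle, injectivity of $d\psi$ is exactly the statement; it moreover shows that $(p_i)$ is isolated in the fibre of $\psi$ over $\psi(p_i)$, so the conclusion also covers families over an arbitrary reduced base.

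The first step is to compute $d\psi_{(p_i)}$. As $\oO_C(\sum_i d_i q_i)=\bigotimes_i\oO_C(q_i)^{\otimes d_i}$, the morphism $\psi$ is the composite of $\prod_i\psi_i\colon C^n\to J^n$ with the group-addition map $J^n\to J$, where $\psi_i\colon C\to J$ is $q\mapsto\oO_C(q)^{\otimes d_i}$; since passing to the $d_i$-th tensor power acts as multiplication by $d_i$ on $\HC^1(C,\oO_C)$, the differential of $\psi_i$ at $p_i$ is $d_i\cdot(d\alpha)_{p_i}$, where $\alpha\colon q\mapsto\oO_C(q)$ is the Abel--Jacobi map. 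As the differential of an addition map is the sum of the factors' differentials, we obtain
$$d\psi_{(p_i)}\colon (v_1,\dots,v_n)\longmapsto \sum_i d_i\,(d\alpha)_{p_i}(v_i)\ \in\ \HC^1(C,\oO_C).$$
By Lemma \ref{lem:A} (with $k=1$, $D=p_i$) the map $(d\alpha)_{p_i}$ has image the line $\ell_i:=\ker\bigl(\HC^1(C,\oO_C)\to\HC^1(C,\oO_C(p_i))\bigr)$, which is one-dimensional because $\oO(p_i)\subseteq\oO(\sum_j p_j)$ forces $h^0(\oO(p_i))=1$; so $(d\alpha)_{p_i}$ is an isomorphism onto $\ell_i$.

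The crux is that $\ell_1,\dots,\ell_n$ are linearly independent in $\HC^1(C,\oO_C)$. Their span $\ell_1+\cdots+\ell_n$ is the image of the differential at $(p_i)$ of the addition map $\beta\colon C^n\to J$, $(q_i)\mapsto\oO_C(\sum_i q_i)$; by Corollary \ref{cor:A} (with all $d_i=1$ and $D_i=p_i$, so $\bigcup p_i=\sum_i p_i$ as the $p_i$ are distinct) this image equals $dA\bigl(T_{\sum_i p_i}C^{(n)}\bigr)$, which by Lemma \ref{lem:A} has dimension $n+1-h^0(\oO(\sum_i p_i))=n$. Thus $n$ nonzero lines span an $n$-dimensional space and hence are independent. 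Now if each $d_i$ is invertible in the ground field, a relation $\sum_i d_i(d\alpha)_{p_i}(v_i)=0$ has $i$-th term lying in $\ell_i$, so independence forces every term to vanish, whence $v_i=0$ for all $i$ and $d\psi$ is injective. (The hypothesis also forces $n\le g$ and is vacuous when $g=0$.)

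The step I expect to be the real obstacle is removing the assumption that the $d_i$ are invertible in the ground field. In characteristic $p$ dividing some $d_i$ the differential genuinely degenerates --- already $q\mapsto\oO_C(pq)$ has vanishing differential --- so the infinitesimal formulation fails, and one must instead show directly that the fibre of $\psi$ through $(p_i)$ is zero-dimensional. The natural route is to factor $\psi$ through the finite multiplication-by-$d_i$ isogenies and reduce to bounding, near the origin, the dimension of the intersection of the product of the curves $\psi_i(C)\subset J$ with the kernel of $J^n\to J$; the difficulty is that inseparability makes the tangent cones of these curves at the origin too small to read off transversality, so one needs a genuinely global estimate (for instance on the linear systems $|\oO_C(\sum_i d_i q_i)|$) rather than an infinitesimal one. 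For the applications in this paper this can be sidestepped, since $q$ --- hence $p$ --- is taken large relative to the bounded integers $d_i$ that occur, so the invertibility hypothesis is automatic; I would simply record that restriction.
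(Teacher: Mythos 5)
Your argument is exactly the paper's: the proof given there is the two-line observation that multiplication by $d_i$ on $J$ scales the tangent space at the identity by $d_i$, and that the hypothesis $h^0(\oO(\sum p_i))=1$ forces the tangent lines $\ell_{p_i}=\mathrm{im}\,(dA)_{p_i}$ to be linearly independent (via Lemma \ref{lem:A} and Corollary \ref{cor:A}), which is precisely your decomposition of $d\psi$. The caveat you flag about characteristic $p$ dividing some $d_i$ is real, but it is a gap in the paper's own proof just as much as in yours --- the scaling-by-$d_i$ step silently assumes $d_i\neq 0$ in the ground field --- so recording the invertibility restriction (harmless for the applications, where the $d_i$ are $1$ or $2$ and one could instead argue via quasi-finiteness of $[d_i]$) is the right thing to do.
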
 \begin{proof} On $J(C)$, multiplication by $d_i$
scales the tangent space to the identity by $d_i$.  The hypothesis ensures that the tangent direction to deforming the distinct $p_i$ are linearly independent. \end{proof}

\begin{corollary} \label{cor:finite} Let $D_1,\ldots, D_n$ be any divisors such that $\dim \HC^0(C, \bigcup D_i) = 1$, and let $\Ll$ be any line bundle.  Then only finitely many points of
$|\Ll |$ are of the form $\sum n_i D_i$. \end{corollary}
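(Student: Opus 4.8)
The plan is to recast the statement as the quasi-finiteness of a morphism to the Jacobian and then feed in Proposition~\ref{prop:finite}. Write $d_i:=\deg D_i$; a point of $|\Ll|$ ``of the form $\sum_i n_i D_i$'' means an effective divisor $E\sim\Ll$ admitting an expression $E=\sum_i n_i E_i$ with positive integers $n_i$, effective $E_i$ of degree $d_i$, and $\dim\HC^0(C,\bigcup_i E_i)=1$. Since such an $E$ has degree $\sum_i n_i d_i$, only finitely many tuples $(n_1,\dots,n_n)$ can occur, so fix one; the same argument applies to each. Consider the morphism
\[
\phi\colon C^{(d_1)}\times\cdots\times C^{(d_n)}\longrightarrow J_{\deg\Ll},\qquad (E_1,\dots,E_n)\longmapsto \oO_C\Bigl(\textstyle\sum_i n_i E_i\Bigr),
\]
and the open locus $U\subseteq\prod_i C^{(d_i)}$ on which $\dim\HC^0(C,\bigcup_i E_i)=1$; it is nonempty, containing $(D_1,\dots,D_n)$. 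Every $\Ll$-point of the asserted form arises as $\sum_i n_i E_i$ for some $(E_i)\in\phi^{-1}([\Ll])\cap U$, so it suffices to prove that $\phi|_U$ is quasi-finite.

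Next I would note why the differential alone does not suffice. The differential of the addition map $\prod_i C^{(d_i)}\to J$ is the sum of the differentials of the Abel--Jacobi maps on the factors, so by Corollary~\ref{cor:A} and Lemma~\ref{lem:A} its image at $(E_i)$ has dimension $\deg(\bigcup_i E_i)+1-\dim\HC^0(C,\bigcup_i E_i)$, equal on $U$ to $\deg(\bigcup_i E_i)$; and since $[n_i]\colon J\to J$ acts on $T_0J$ by the scalar $n_i$ (the mechanism in the proof of Proposition~\ref{prop:finite}), the same holds for $\phi$ in characteristic zero. Hence $\phi$ is unramified at points of $U$ with pairwise disjoint supports, but along the ``partial diagonals'' where some $E_i$ share points it is genuinely ramified --- for instance the fibre of $C\times C\to J$ over $\oO_C(2P)$, with $P$ a Weierstrass point of a hyperelliptic curve, is an entire copy of $C$, meeting $U$ only in the finitely many $(Q,Q)$ with $Q$ a Weierstrass point. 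So quasi-finiteness of $\phi|_U$ must be read off by hand.

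To do so, suppose $\phi^{-1}([\Ll])$ contained an irreducible curve meeting $U$, and choose a nonconstant map from a smooth affine curve, $t\mapsto(E_1(t),\dots,E_n(t))$, into it whose generic point lies in $U$. After a finite base change and a stratification by combinatorial type --- the common support $p_1,\dots,p_s$ of the $E_i$ together with the multiplicities $m_{ij}$ such that $E_i=\sum_j m_{ij}p_j$ --- we may assume the $m_{ij}$ are constant and the $p_j(t)$ distinct. Because $\sum_j p_j(t)\le\bigcup_i E_i(t)$, the condition $\dim\HC^0(C,\bigcup_i E_i(t))=1$ forces $\dim\HC^0(C,\sum_j p_j(t))=1$; so Proposition~\ref{prop:finite}, applied to the distinct points $p_1,\dots,p_s$ and the positive integers $b_j:=\sum_i n_i m_{ij}$, shows that any nontrivial motion of $(p_1,\dots,p_s)$ moves the line bundle $\oO_C(\sum_j b_j p_j)=\oO_C(\sum_i n_i E_i)$. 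Along our curve this bundle is $\Ll$, hence constant, so the $p_j(t)$ --- and therefore the $E_i(t)$ --- are constant, a contradiction. Since $\prod_i C^{(d_i)}$ has only finitely many combinatorial strata, $\phi|_U$ is quasi-finite.

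The main obstacle is exactly this last point: because $\phi$ ramifies along all the partial diagonals, quasi-finiteness is not visible infinitesimally, and one has to see that the ``extra'' tangent directions of the fibres are obstructed by the jump in $\dim\HC^0(C,\bigcup_i E_i)$ --- which is why reducing to the common support and using $\sum_j p_j\le\bigcup_i E_i$ is what lets Proposition~\ref{prop:finite} bite. One caveat: in characteristic $p$ the scalar $n_i$ on $T_0J$ may vanish, but $[n_i]$ remains an isogeny and so does not collapse curves; this is all that Proposition~\ref{prop:finite} --- and hence the argument above --- actually needs, so everything goes through.
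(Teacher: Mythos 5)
The paper offers no proof of this corollary --- it is meant to be read off from Proposition~\ref{prop:finite} exactly as you do: an infinite set of solutions would produce a positive-dimensional family, and after stratifying by combinatorial type one obtains a nonconstant family of distinct points $p_1(t),\dots,p_s(t)$ with $\dim\HC^0(C,\sum_j p_j(t))=1$ and $\oO_C(\sum_j b_j p_j(t))$ constant, contradicting the proposition. Your reduction to the common support via $\sum_j p_j\le\bigcup_i E_i$ is precisely the step that makes Proposition~\ref{prop:finite} applicable, and your remark that quasi-finiteness is not visible from the differential of $\phi$ alone (because of the partial diagonals) is a correct and worthwhile observation. In characteristic zero the argument is complete and is, in substance, the paper's.

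The characteristic-$p$ patch at the end, however, is a genuine gap. First, when $p\mid b_j$ the failure is worse than the vanishing of the scalar $b_j$ on $T_0J$: the map $C\to C^{(b_j)}$, $q\mapsto b_jq$, already has identically vanishing differential (in local coordinates it is $x\mapsto(b_jx,\binom{b_j}{2}x^2,\dots)$), so the entire infinitesimal mechanism behind Proposition~\ref{prop:finite} is unavailable, not merely weakened. Second, the observation that each $[b_j]$ is an isogeny controls only the individual maps $[b_j]\circ A\colon C\to J$; it says nothing about the fibres of the \emph{sum} $(q_1,\dots,q_s)\mapsto\sum_j[b_j]A(q_j)$, since a nonconstant term can a priori be cancelled by another nonconstant term. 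So ``everything goes through'' is not justified; a characteristic-$p$ proof needs a non-infinitesimal input (for the hyperelliptic curves to which Corollary~\ref{cor:dimestimate} applies the statement one can exploit Lemma~\ref{hyperpair}, since a moving divisor must acquire a hyperelliptic part, which is incompatible with $\dim\HC^0(C,\sum_j p_j)=1$ except on the finitely many Weierstrass points). To be fair, the paper's own proof of Proposition~\ref{prop:finite} is equally characteristic-zero-bound, so this is a defect you share with, rather than introduce into, the text.
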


The canonical bundle is base point free: vanishing at
any $p \in C$ is a codimension one condition on sections of $H^0(C, K_C)$, or in other words
containing $p$ is a codimension one on divisors in $|K_C|$.  This gives a map
$C \to \p H^0(C, K_C)^\vee = \p H_1(C, \oO_C) = \p T_0 J$, which is identified
with the differential of the Abel map $\p dA: C \to \p T_0 J$.

\begin{definition}
For $p \in C$, we write $\ell_p:=\p dA(p) \in \p T_0 J$.
\end{definition}

More generally, letting $ G(d, \, \cdot \,)$ denote the Grassmannian of $d$ dimensional
subspaces,
for $d \le g$ the map $A: C^{(d)}\rightarrow J$ induces a Gauss map, defined on the locus of divisors
with $|D| = \{D\}$:
\begin{eqnarray*}
G(dA):C^{(d)} & \dashrightarrow  & G(d, H^1(C, \oO_C)) \\
D & \mapsto & [ dA(T_D C^{(d)}) ] 
\end{eqnarray*}

\subsection{Special divisors on hyperelliptic curves}

Let $\pi: C \to \p^1$ be a hyperelliptic curve.  Let $\tau:C \to C$ be the hyperelliptic involution, and let $\kappa = \pi^* \Oo(1)$.

\begin{notation} On a hyperelliptic curve $\pi: C \to \PP^1$, we say a divisor is hyperelliptic if it is the pullback of a divisor on $\PP^1$. For any effective divisor $D$, we write $D^h$
for the maximal effective hyperelliptic divisor such that $D^r:= D - D^h$ is effective. \end{notation}

\begin{lemma}
\label{lem:kappasections}
 For $0 \le h < g$, we have $\HC^0(C, h\kappa) = \HC^0(\p^1, \Oo(h))$.  In other words, for any $D \in |h\kappa|$, we have $D^r = 0$.  Moreover, $(g-1)\kappa$ is the canonical
bundle of $C$. \end{lemma}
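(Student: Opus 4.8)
The plan is to compute $\HC^0(C,h\kappa)$ directly via the pushforward sheaf $\pi_*\Oo_C$ and the projection formula, and then to read off both assertions as immediate consequences. \emph{Step 1: the structure of $\pi_*\Oo_C$.} Since $\pi$ is finite and flat of degree $2$, the sheaf $\pi_*\Oo_C$ is locally free of rank $2$ on $\PP^1$, hence $\pi_*\Oo_C\cong\Oo(a)\oplus\Oo(b)$ for integers $a\ge b$ by the Grothendieck--Birkhoff splitting theorem. As $C$ is geometrically integral and proper we have $h^0(C,\Oo_C)=1$; since $h^0(\Oo(a)\oplus\Oo(b))=h^0(\Oo(a))+h^0(\Oo(b))$, this forces $a=0$ and $b\le-1$. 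Comparing Euler characteristics --- $\chi(\Oo(a)\oplus\Oo(b))=a+b+2$ while $\chi(\pi_*\Oo_C)=\chi(\Oo_C)=1-g$ --- gives $b=-g-1$. Thus $\pi_*\Oo_C\cong\Oo_{\PP^1}\oplus\Oo_{\PP^1}(-g-1)$ (the familiar fact that the double cover $\pi$ has branch divisor of degree $2g+2$).

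\emph{Step 2: cohomology of $h\kappa$, and the statement about $D^r$.} By the projection formula, $\pi_*(h\kappa)=\pi_*\pi^*\Oo(h)=\Oo(h)\otimes\pi_*\Oo_C\cong\Oo(h)\oplus\Oo(h-g-1)$, so $\HC^0(C,h\kappa)\cong\HC^0(\PP^1,\Oo(h))\oplus\HC^0(\PP^1,\Oo(h-g-1))$. For $0\le h<g$ we have $h-g-1\le-2$, so the second summand vanishes and $\HC^0(C,h\kappa)=\HC^0(\PP^1,\Oo(h))$; moreover every global section of $h\kappa$ is then of the form $\pi^* t$ for some $t\in\HC^0(\PP^1,\Oo(h))$. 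Hence for $D\in|h\kappa|$ we have $D=\operatorname{div}(\pi^* t)=\pi^*\operatorname{div}(t)$, the pullback of an effective divisor on $\PP^1$, so $D$ is hyperelliptic; since $D^h\le D$ always, this gives $D^h=D$ and $D^r=0$.

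\emph{Step 3: $(g-1)\kappa=K_C$.} Applying Step 2 with $h=g-1$ gives $h^0(C,(g-1)\kappa)=h^0(\PP^1,\Oo(g-1))=g$. Since $\deg (g-1)\kappa=2(g-1)=2g-2$, Riemann--Roch yields $h^1(C,(g-1)\kappa)=h^0-(2g-2)+(g-1)-1$, i.e. $h^1(C,(g-1)\kappa)=1$; by Serre duality $h^0(C,K_C-(g-1)\kappa)=1$, and as $\deg(K_C-(g-1)\kappa)=0$ and a degree-$0$ line bundle with a nonzero global section is trivial, we conclude $K_C=(g-1)\kappa$.

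\emph{Main obstacle.} There is no genuine difficulty here; the only input beyond pure formalism is the identification of $\pi_*\Oo_C$ in Step 1 --- equivalently, that the branch divisor of $\pi$ has degree $2g+2$ --- which in the argument above simply drops out of $\chi(\Oo_C)=1-g$ together with $h^0(\Oo_C)=1$. Everything else is the projection formula and Riemann--Roch on $\PP^1$ and on $C$.
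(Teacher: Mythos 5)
Your proof is correct, but it takes a genuinely different route from the paper's. The paper argues top-down on $C$ itself: it first handles $h=g-1$ by noting that $\HC^0(\p^1,\Oo(g-1))\subset \HC^0(C,(g-1)\kappa)$ and that no degree-$(2g-2)$ line bundle has more than $g$ sections (with equality only for $K_C$), and then runs a downward induction using the exact sequences $0\to h\kappa\to (h+1)\kappa\to \Oo_p\oplus\Oo_{\tau(p)}\to 0$ obtained by twisting down by a fibre of $\pi$ over a non-branch point. You instead compute $\pi_*\Oo_C\cong\Oo\oplus\Oo(-g-1)$ once via the Grothendieck splitting theorem, $h^0(\Oo_C)=1$ and $\chi(\Oo_C)=1-g$, after which the projection formula gives all the statements for all $h$ simultaneously; this is arguably cleaner, makes the ``every section is a pullback, hence $D^r=0$'' step completely transparent, and also hands you the $\HC^1$'s for free, at the cost of invoking flatness of $\pi$ and the splitting theorem rather than staying entirely on the curve. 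Your identification of $(g-1)\kappa$ with $K_C$ via Serre duality plus ``a degree-$0$ bundle with a section is trivial'' is the same underlying fact the paper uses, just packaged differently. One cosmetic slip: your displayed Riemann--Roch formula $h^1=h^0-(2g-2)+(g-1)-1$ evaluates to $0$ as written (it should be $h^1=h^0-(2g-2)+g-1$), but the value $h^1=1$ that you actually use is correct, so nothing downstream is affected.
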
 \begin{proof} Certainly $\C^g = \HC^0(\p^1, \Oo(g-1)) \subset \HC^0(C,(g-1)\kappa)$.  But no degree $2g-2$ line bundle on a curve has more than $g$ sections, so
the inclusion must be an equality.  Moreover the only such bundle is the canonical bundle.  Now we induct downward.  Assuming the statement for $h+1$, consider the sequence $0 \to h\kappa
\to (h+1)\kappa \to \Oo_p \oplus \Oo_{\tau(p)} \to 0$, obtained by pulling back a section of $\Oo(1)$ vanishing away from a ramification point of $\pi$.  Then taking cohomology we have \[ 0
\to \HC^0(C, h\kappa) \to \HC^0(C, (h+1)\kappa) \to \C^2 \to \HC^1(C,h\kappa) \to \HC^1(C,(h+1)\kappa) \to 0\] By hypothesis $\HC^0(C, (h+1)\kappa) = \HC^0(\p^1, \Oo(h+1))$.  Therefore the
image of $\HC^0(C, (h+1)\kappa) \to \C^2$ is the diagonal $\C$, and $\dim \HC^0(C,h\kappa) = h+1$.  As this space contains $\HC^0(\p^1, \Oo(h))$, the containment must be an equality.
\end{proof}

\begin{corollary}
The hyperelliptic involution acts trivially on $\p H^0(C, K_C) = \p T_0 J^\vee$ and consequently
trivially on $\p H^1(C, \oO_C) = \p  T_0 J$.  \end{corollary}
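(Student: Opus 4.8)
The plan is to show that the hyperelliptic involution $\tau$ acts on the vector space $\HC^0(C,K_C)$ by a scalar; it then acts trivially on the projectivization $\PP\HC^0(C,K_C) = \PP T_0 J^\vee$, and, since $\HC^1(C,\oO_C) = \HC^0(C,K_C)^\vee$ by Serre duality, it also acts by a scalar on $\HC^1(C,\oO_C)$ and hence trivially on $\PP\HC^1(C,\oO_C) = \PP T_0 J$. This is exactly the ``consequently'' clause, so it costs nothing extra.

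To see that $\tau$ acts by a scalar on $\HC^0(C,K_C)$, I would invoke Lemma \ref{lem:kappasections}: it identifies $K_C$ with $(g-1)\kappa = \pi^*\oO(g-1)$ and, more importantly, shows that every global section of $K_C$ is of the form $\pi^* s$ for a section $s$ of $\oO(g-1)$ on $\PP^1$. Since $\pi\circ\tau = \pi$, the line bundle $\pi^*\oO(g-1)$ carries a canonical $\tau$-linearization under which pulled-back sections are fixed, so $\tau$ acts trivially on $\HC^0(C,K_C)$ with respect to this linearization. The tautological linearization of $K_C = \Omega^1_C$ (pullback of $1$-forms) can differ from this one only by a character of $\langle\tau\rangle \cong \Z/2$, i.e.\ by a global sign — concretely, in a hyperelliptic model $y^2 = f(x)$ one has $\tau^*(x^i\,dx/y) = -x^i\,dx/y$, so in fact $\tau$ acts as $-1$. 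In either case $\tau$ acts as a scalar, which is all that is needed.

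Alternatively, and more in keeping with the Gauss map discussion just above, one can argue via the canonical map $\phi_{K_C}\colon C \to \PP\HC^0(C,K_C)^\vee = \PP T_0 J$, which is identified with $\PP dA$. By Lemma \ref{lem:kappasections} this map factors through $\pi$, so it is constant on $\tau$-orbits; on the other hand it is $\tau$-equivariant for the natural action on the target. Hence $\tau$ fixes every point of the image $\phi_{K_C}(C)$, and that image spans $\PP T_0 J$ (a linear form vanishing on it would be a nonzero section of $K_C$ vanishing identically on $C$). A projective-linear automorphism fixing a spanning set of points is the identity, so $\tau$ acts trivially on $\PP T_0 J$, and dually on $\PP T_0 J^\vee = \PP\HC^0(C,K_C)$.

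There is essentially no real obstacle here; the only point requiring a moment's care is the bookkeeping in the first argument — that the ``$\pi\circ\tau = \pi$'' linearization of $\pi^*\oO(g-1)$ and the tautological linearization of $K_C = \Omega^1_C$ agree up to a sign — and this is precisely what the coordinate-free second argument sidesteps.
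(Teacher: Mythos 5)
Your proposal is correct and matches the paper's (implicit) argument: the corollary is stated without proof precisely because Lemma \ref{lem:kappasections} shows every section of $K_C=(g-1)\kappa$ is pulled back from $\PP^1$, so $\tau$ acts by a scalar (in fact $-1$) on $\HC^0(C,K_C)$ and hence trivially on both projectivizations, exactly as in your first argument. Your second, coordinate-free argument via the canonical map is a pleasant alternative but not needed.
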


\begin{corollary}
The canonical morphism
$\p(dA): C \to \PP T_0 J$ factors through the hyperelliptic involution, and is in fact
the composition of $C \to C/\tau = \p^1$ with the Veronese embedding
$\p^1 \to \p H^0(\p^1, \oO_{\p^1}(g))^\vee = \p^{g-1}$.
\end{corollary}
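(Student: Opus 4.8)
The plan is to deduce the statement directly from Lemma \ref{lem:kappasections}, which already does the real work. Recall that the canonical morphism $\p(dA)\colon C \to \p H^0(C,K_C)^\vee = \p T_0 J$ is the morphism attached to the complete linear system $|K_C|$: it sends a point $p$ to the hyperplane of those sections of $K_C$ that vanish at $p$. First I would invoke Lemma \ref{lem:kappasections} to identify $K_C \cong (g-1)\kappa = \pi^*\oO_{\p^1}(g-1)$ and, via pullback, $H^0(C,K_C) \cong H^0(\p^1,\oO_{\p^1}(g-1))$; for $g \ge 2$ the latter is a complete linear system of projective dimension $g-1$, matching $\dim \p T_0 J = g-1$. (In particular the relevant Veronese is the one attached to $\oO_{\p^1}(g-1)$, i.e.\ of degree $g-1$, landing in $\p^{g-1}$.)

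Next I would observe that for $s = \pi^* t$ with $t \in H^0(\p^1, \oO_{\p^1}(g-1))$, the section $s$ vanishes at $p \in C$ exactly when $t$ vanishes at $\pi(p)$ — the only subtlety, at a ramification point, being that $\pi^* t$ then vanishes there to order at least two, which is irrelevant since the canonical morphism only records the zero locus. Hence, under the isomorphism $\p H^0(C,K_C)^\vee \cong \p H^0(\p^1, \oO_{\p^1}(g-1))^\vee$ induced by $\pi^*$, the point $\p(dA)(p)$ corresponds to the hyperplane of sections of $\oO_{\p^1}(g-1)$ vanishing at $\pi(p)$. In other words $\p(dA) = \nu \circ \pi$, where $\nu\colon \p^1 \to \p H^0(\p^1, \oO_{\p^1}(g-1))^\vee = \p^{g-1}$ is the morphism of the complete linear system $|\oO_{\p^1}(g-1)|$.

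Finally I would recall that $|\oO_{\p^1}(g-1)|$ is very ample (since $g-1 \ge 1$) and that $\nu$ is precisely the degree-$(g-1)$ Veronese embedding, i.e.\ the rational normal curve in $\p^{g-1}$, and that $\pi\colon C \to \p^1$ is the quotient by the hyperelliptic involution $\tau$. This yields the claimed factorization $\p(dA) = \nu \circ \pi$ through $C \to C/\tau = \p^1$. I expect no genuine obstacle beyond this bookkeeping: the lone delicate point is the behaviour of $\pi^*$ at the branch points, and it is harmless. If one prefers to avoid even that, an alternative is to use the $\tau$-equivariance of the canonical morphism together with the triviality of the $\tau$-action on $\p T_0 J$ established in the preceding corollary to see immediately that $\p(dA)$ factors through $C/\tau = \p^1$, and then pin down the induced map $\p^1 \to \p^{g-1}$ as $\nu$ by noting that it pulls $\oO(1)$ back to a line bundle of degree $(2g-2)/2 = g-1$ and has non-degenerate image (the image of a morphism of a complete linear system spans).
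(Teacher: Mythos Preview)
Your argument is correct. In the paper this corollary is stated without proof, as an immediate consequence of Lemma~\ref{lem:kappasections} (which gives $K_C \cong \pi^*\oO_{\p^1}(g-1)$ and $\HC^0(C,K_C) \cong \HC^0(\p^1,\oO_{\p^1}(g-1))$) and of the preceding corollary on the $\tau$-action; your write-up simply fills in these omitted details, and does so along exactly the lines the paper intends. You also correctly catch that the statement should read $\oO_{\p^1}(g-1)$ rather than $\oO_{\p^1}(g)$, since $\p \HC^0(\p^1,\oO_{\p^1}(g-1))^\vee = \p^{g-1}$.
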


\begin{corollary} \label{cor:Gauss}
For $d \le g$,
the Gauss map $G(dA): C^{(d)} \dashrightarrow G(d, T_0 J)$ extends to a morphism
\begin{eqnarray*}
\overline{G}:C^{(d)} & \rightarrow  & G(d, H^1(C, \oO_C)) \\
D & \mapsto &  [ dA(T_{\pi^* \pi_* D} C^{(2d)})]
\end{eqnarray*}
\end{corollary}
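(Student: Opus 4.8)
The plan is to take the formula in the statement as the \emph{definition} of $\overline{G}$, and then verify two things: that it defines a morphism, and that it agrees with the rational map $G(dA)$ on a dense open subset of $C^{(d)}$. Since $C^{(d)}$ is irreducible and the Grassmannian $G(d,T_0J)$ is separated, the second point is exactly the claim that $\overline{G}$ extends $G(dA)$. The basic input is a linear-algebraic restatement of Lemma \ref{lem:A}: dualizing its conclusion under Serre duality $\HC^1(C,\oO_C)=\HC^0(C,K_C)^\vee$, one gets, for every effective divisor $F$ on $C$,
\[ dA\bigl(T_FC^{(\deg F)}\bigr)=\HC^0(C,K_C-F)^{\perp}\subseteq\HC^1(C,\oO_C)=T_0J, \]
where $\HC^0(C,K_C-F)\subseteq\HC^0(C,K_C)$ is the space of sections of $K_C$ vanishing along $F$, of dimension $\deg F+1-\dim\HC^0(C,\oO(F))$. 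Applying this with $F=\pi^*\pi_*D=D+\tau D$ for $D\in C^{(d)}$: since $\pi_*D$ is effective of degree $d$ on $\p^1$, we have $\oO(F)=d\kappa$ and $K_C-F=\pi^*\bigl(\oO_{\p^1}(g-1)(-\pi_*D)\bigr)$, using $K_C=(g-1)\kappa$. Because $\pi_*\oO_C=\oO_{\p^1}\oplus\oO_{\p^1}(-g-1)$, pullback gives $\HC^0(C,\pi^*\mathcal F)=\HC^0(\p^1,\mathcal F)\oplus\HC^0(\p^1,\mathcal F(-g-1))$ for every line bundle $\mathcal F$ on $\p^1$; for $\mathcal F=\oO_{\p^1}(g-1)(-\pi_*D)$ the second summand vanishes since $\deg\mathcal F(-g-1)<0$, so $\pi^*$ identifies $\HC^0(C,K_C-F)$ with $\HC^0(\p^1,\oO_{\p^1}(g-1)(-\pi_*D))$, of dimension $g-d$ by Lemma \ref{lem:kappasections} when $d\le g-1$ (the case $d=g$ being vacuous, since then $G(d,T_0J)$ is a point). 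Hence $dA\bigl(T_{\pi^*\pi_*D}C^{(2d)}\bigr)$ has dimension $d$ for \emph{every} $D\in C^{(d)}$: the formula really does take values in $G(d,T_0J)$, with $\overline{G}(D)$ equal to the annihilator of $\pi^*\HC^0(\p^1,\oO_{\p^1}(g-1)(-\pi_*D))\subseteq\HC^0(C,K_C)$.

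To see that $\overline{G}$ is a morphism, I factor it. The map $D\mapsto\pi_*D$ is a morphism $C^{(d)}\to(\p^1)^{(d)}$, and on $(\p^1)^{(d)}$ the assignment $E\mapsto\HC^0(\p^1,\oO_{\p^1}(g-1)(-E))$ is the classifying morphism to $G(g-d,\HC^0(\p^1,\oO_{\p^1}(g-1)))$: twisting the ideal sheaf sequence of the universal divisor $\mathcal D\subset\p^1\times(\p^1)^{(d)}$ by $\oO_{\p^1}(g-1)$ and pushing forward to $(\p^1)^{(d)}$ yields a short exact sequence $0\to\mathcal S\to\HC^0(\p^1,\oO_{\p^1}(g-1))\otimes\oO\to\mathcal Q\to 0$ of locally free sheaves with $\mathcal Q$ of rank $d$, the pertinent higher direct image vanishing precisely because $d\le g$ forces $\HC^1(\p^1,\oO_{\p^1}(g-1-d))=0$ (equivalently, any at most $g$ points of $\p^1$ impose independent conditions on $|\oO_{\p^1}(g-1)|$; compare \cite[Chapter~1]{ACGH}). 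Composing with the isomorphism $\HC^0(\p^1,\oO_{\p^1}(g-1))^\vee\cong T_0J$ --- the identification through which, by the preceding corollaries, $\p dA$ factors as $\pi$ followed by the degree-$(g-1)$ Veronese --- and passing to annihilators realizes $\overline{G}$ as a morphism $C^{(d)}\to G(d,T_0J)$.

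It remains to check that $\overline{G}$ extends $G(dA)$. On the open set $V=\{D\in C^{(d)}:\dim\HC^0(C,\oO(D))=1\}$ the differential $dA$ is fibrewise injective with locally free image of rank $d$, so $D\mapsto[dA(T_DC^{(d)})]=\HC^0(C,K_C-D)^{\perp}$ is a genuine morphism $V\to G(d,T_0J)$ --- this is $G(dA)$. Let $U'\subseteq C^{(d)}$ be the dense open locus of reduced divisors $P_1+\cdots+P_d$ whose $2d$ points $P_1,\dots,P_d,\tau P_1,\dots,\tau P_d$ are pairwise distinct. For $D\in U'$, every section of $K_C=(g-1)\kappa$ is pulled back from $\p^1$, hence $\tau$-invariant, hence vanishes at $P_i$ if and only if it vanishes at $\tau P_i$; therefore $\HC^0(C,K_C-D)=\HC^0(C,K_C-D-\tau D)=\pi^*\HC^0(\p^1,\oO_{\p^1}(g-1)(-\pi_*D))$. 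By Riemann--Roch this forces $\dim\HC^0(C,\oO(D))=1$, so $U'\subseteq V$, and together with the previous paragraph it shows $G(dA)(D)=\overline{G}(D)$ for all $D\in U'$. Two morphisms to the separated scheme $G(d,T_0J)$ that agree on the dense subset $U'$ of the irreducible variety $V$ agree on all of $V$; hence $\overline{G}$ restricts to $G(dA)$ on its domain, which is what ``extends'' means.

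The only genuinely non-formal point is the uniform dimension count in the first paragraph. The rational map $G(dA)$ really does drop rank --- and so is undefined --- exactly along the loci where $\oO(D)$ moves in a positive-dimensional linear system, and it is not automatic that precomposing with $D\mapsto\pi^*\pi_*D$ removes this indeterminacy. What makes it work is that the classes $d\kappa$ with $d\le g$ are precisely those whose associated linear spans of Veronese images have the expected dimension, since the rational normal curve in $\p T_0J$ through which $\p dA$ factors has no deficient secant planes --- this is Lemma \ref{lem:kappasections} in disguise.
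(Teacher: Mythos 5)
Your proof is correct and follows essentially the same route as the paper: the key point in both is that $\oO_C(\pi^*\pi_*D)\cong d\kappa$ has exactly $d+1$ sections by Lemma \ref{lem:kappasections}, so $dA(T_{\pi^*\pi_*D}C^{(2d)})$ has constant dimension $d$, and the formula therefore lands in the Grassmannian everywhere. The only real difference is in checking agreement with $G(dA)$: the paper uses the inclusion $dA(T_DC^{(d)})\subset dA(T_{\pi^*\pi_*D}C^{(2d)})$ together with equality of dimensions wherever the Gauss map is defined, whereas you verify it on the locus of reduced divisors without hyperelliptic pairs via $\tau$-invariance of canonical sections and then invoke separatedness; both are valid.
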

\begin{proof}
By Lemma \ref{lem:kappasections}, we have $\dim \HC^0(C, \oO_C(\pi^* \pi_* D)) = d + 1$.
Thus from the long exact sequence
$$ 0 \to \HC^0(C, \oO_C) \to \HC^0(C, \oO_C(D + \overline{D}))
\to \HC^0(C, \oO_{D + \overline{D}}(D + \overline{D})) \xrightarrow{dA} \HC^1(C,\oO_C) $$
we see that $\dim dA(T_{\pi^* \pi_* D} C^{(2d)}) = d$.
On the other hand
$dA(T_D C^{(d)}) \subset dA(T_{\pi^* \pi_* D} C^{(2d)})$,
and thus we have equality where the Gauss map was originally defined.
\end{proof}

The map $\overline{G}$ is evidently invariant under the hyperelliptic involution, and so descends
to $\overline{G}/\tau :\PP^d\rightarrow G(d,\HC^1(C,\oO_C))$.

\begin{lemma}\label{embedding}
For $d < g$, the map $\overline{G}/\tau :\PP^d\rightarrow G(d, T_0 J)$ is an embedding.
In particular, when $d = g-1$ we find an isomorphism
$\overline{G}/\tau: (C/\tau)^{(g-1)} \cong \PP T_0 J ^\vee$.
\end{lemma}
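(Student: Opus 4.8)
The plan is to compute $\overline{G}/\tau$ explicitly as a morphism of projective varieties and then verify by elementary commutative algebra on $\PP^1$ that it is a closed embedding. By Lemma \ref{lem:kappasections} we have $K_C = (g-1)\kappa$ and $\HC^0(C,K_C) = \pi^*\HC^0(\PP^1,\Oo(g-1))$; put $V := \HC^0(\PP^1,\Oo(g-1))$, so $\dim V = g$, $T_0 J = \HC^1(C,\Oo_C) = V^\vee$, and taking orthogonal complements gives an isomorphism $G(d,T_0J)\cong G(g-d,V)$. A point of $\PP^d = (C/\tau)^{(d)} = (\PP^1)^{(d)}$ is an effective divisor $E$ of degree $d$ on $\PP^1$, i.e.\ a degree $d$ form $s$ up to scalar. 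The first step is to identify $\overline{G}/\tau$: for $D\in C^{(d)}$ with $\pi_*D = E$, running the definition of $\overline{G}(D)$ through the long exact sequence used in the proof of Corollary \ref{cor:Gauss} and dualizing by Serre duality shows that $dA(T_{\pi^*\pi_*D}C^{(2d)})$ is the annihilator in $V^\vee$ of $\HC^0(C,K_C(-\pi^*E))\subseteq \HC^0(C,K_C) = V$. Since a pulled-back degree $g-1$ form vanishes along $\pi^*E$ exactly when the form vanishes along $E$, this subspace is $W_s := s\cdot\HC^0(\PP^1,\Oo(g-1-d))$, of dimension $g-d$. Thus, identifying $G(d,T_0J)$ with $G(g-d,V)$ via $U\mapsto U^\perp$, the map $\overline{G}/\tau$ becomes $\gamma:\PP^d\to G(g-d,V)$, $[s]\mapsto [W_s]$.

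Next I would show $\gamma$ is injective on $K$-points for every field $K$. If $W_s = W_{s'}$ with $\deg s = \deg s' = d$, then $s$ divides both $s'x^{g-1-d}$ and $s'y^{g-1-d}$ in $K[x,y]$; writing $f := \gcd(s,s')$, $s = fu$, $s' = fv$ with $\gcd(u,v)=1$, divisibility forces $u\mid x^{g-1-d}$ and $u\mid y^{g-1-d}$, hence $u$ is a unit, and symmetrically $v$ is a unit, so $[s] = [s']$.

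Then I would check that $d\gamma$ is injective at every point. The tangent space of $\PP^d$ at $[s]$ is $\HC^0(\PP^1,\Oo(d))/\langle s\rangle$, and a deformation $s + \epsilon t$ carries $W_s$ to the first-order subspace associated to the homomorphism $W_s\to V/W_s$, $sh\mapsto th \bmod W_s$; so, up to the identification $W_s\cong \HC^0(\PP^1,\Oo(g-1-d))$, $d\gamma$ is the multiplication map $\bar t\mapsto (h\mapsto \overline{th})$. If it kills $\bar t$ then $tx^{g-1-d}, ty^{g-1-d}\in W_s$, and the same gcd argument gives $s\mid t$, whence $\bar t = 0$. Since $\PP^d$ is projective and $\gamma$ is injective on $K$-points for all $K$ with injective differential everywhere, $\gamma$ is a closed embedding. (For $d=0$ the source is a point; for $1\le d\le g-2$ the exponent $g-1-d$ is positive and the above applies; for $d = g-1$ one has $\HC^0(\PP^1,\Oo(0)) = K$, so $\gamma$ is literally the identity of $\PP\HC^0(\PP^1,\Oo(g-1))$, and since $(C/\tau)^{(g-1)} = \PP\HC^0(\PP^1,\Oo(g-1))$ while $G(g-1,T_0J) = \PP(T_0J^\vee) = \PP\HC^0(\PP^1,\Oo(g-1))$, this yields the asserted isomorphism $\overline{G}/\tau:(C/\tau)^{(g-1)}\cong\PP T_0J^\vee$; alternatively the last case follows from the closed embedding together with $\dim(C/\tau)^{(g-1)} = g-1 = \dim\PP T_0J^\vee$.)

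The step I expect to be the main obstacle is the first one: correctly transporting the intrinsic description of $\overline{G}$ — spans of images of $dA$ on tangent spaces to symmetric products of $C$ — into the polynomial model on $\PP^1$, keeping the Serre-duality bookkeeping and the identification $G(d,T_0J)\cong G(g-d,V)$ straight so that ``span of $dA$'' matches ``degree $g-1$ forms divisible by $s$''. Once the map is known to be $[s]\mapsto[W_s]$, the remaining content is just the fact that over the UFD $K[x,y]$ a degree $d$ form is determined, together with its differential, by the ideal it generates in degree $g-1$.
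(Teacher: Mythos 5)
Your proof is correct and is essentially the paper's argument seen in the dual picture: the paper identifies $C/\tau$ with the rational normal curve in $\PP T_0 J$ and appeals to non-degeneracy of the Vandermonde determinant for the span of $d$ points, which is exactly your statement that $[s]\mapsto s\cdot\HC^0(\PP^1,\Oo(g-1-d))$ lands in $G(g-d,V)$ and separates points. Your version is somewhat more complete, since you also verify injectivity of the differential and invoke the proper-plus-injective-plus-unramified criterion for a closed immersion, steps the paper leaves implicit.
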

\begin{proof}
As we have seen, $\p dA/\tau = \overline{G}/\tau: (C/\tau) \to \PP T_0 J$ identifies
  $(C/\tau) = \PP^1$ as the rational normal curve in $ \PP T_0 J$.  More generally,
the map $\overline{G}/\tau$ takes $d$ points on the rational normal curve to the
$d-1$-dimensional plane passing through them; there always exists a unique such
plane by the non-degeneracy of the Vandermonde determinant.
\end{proof}

\begin{corollary} \label{tangents} For $k \le g$ and $p_1, \ldots, p_k \in C$,
the dimension of the linear subspace of $\p T_0 J$ spanned by
$\{\ell_{p_1}, \ldots, \ell_{p_k} \}$ is equal to
 $\# \{\pi(p_1), \ldots, \pi(p_k)\} - 1$.
 \end{corollary}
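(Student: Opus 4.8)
The plan is to reduce the statement to the classical fact that points on a rational normal curve lie in linearly general position --- essentially the Vandermonde computation already used in the proof of Lemma~\ref{embedding}.

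First I would record that $\ell_p = \p dA(p)$ depends only on $\pi(p)$. By the corollary above, $\p dA\colon C\to\p T_0 J$ is the composite of $\pi\colon C\to C/\tau=\p^1$ with the degree $g-1$ Veronese embedding, and by (the proof of) Lemma~\ref{embedding} this realizes $C/\tau=\p^1$ as the rational normal curve $R\subset\p T_0 J$, so in particular $\p^1\to R$ is an isomorphism. Setting $m:=\#\{\pi(p_1),\dots,\pi(p_k)\}$, it follows that $\{\ell_{p_1},\dots,\ell_{p_k}\}$ is a set of exactly $m$ pairwise distinct points, all lying on $R$; "exactly $m$" uses injectivity of $\p^1\to R$, so that $\ell_{p_i}=\ell_{p_j}$ precisely when $\pi(p_i)=\pi(p_j)$. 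The assertion to be proven thus becomes: any $m$ distinct points of $R$ span an $(m-1)$-plane in $\p T_0 J$.

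Now $\p T_0 J=\p^{g-1}$ and $R$ has degree $g-1$, while the hypothesis $k\le g$ forces $m\le g$. So it remains to see that any $m\le g$ points on the degree $g-1$ rational normal curve in $\p^{g-1}$ are linearly independent. This is the Vandermonde non-degeneracy already invoked for Lemma~\ref{embedding}: parametrizing $R$ as $t\mapsto[1:t:\dots:t^{g-1}]$, the $m\times g$ matrix recording $m$ distinct points of $R$ contains an $m\times m$ Vandermonde minor of determinant $\prod_{i<j}(t_j-t_i)\neq 0$, hence has rank $m$, so the points span a $\p^{m-1}$. The parameter value $t=\infty$ is accommodated by a projective change of coordinate on $\p^1$ (or by symmetry of the statement in the points).

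I do not anticipate a genuine obstacle: everything rests on the already established description of $\p dA$ as a Veronese re-embedding of $C/\tau$, together with the Vandermonde determinant. The only point requiring care is the bookkeeping in the second paragraph --- verifying that the number of distinct points among the $\ell_{p_i}$ is exactly $m$ rather than merely at most $m$ --- and observing that the bound $m\le g$ is precisely what keeps us within the range where the required Vandermonde minor is available.
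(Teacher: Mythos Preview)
Your proposal is correct and is precisely the argument the paper has in mind: the corollary is stated without proof because it follows immediately from the preceding identification of $\p dA/\tau$ with the degree $g-1$ Veronese embedding and the Vandermonde non-degeneracy already invoked in Lemma~\ref{embedding}. Your write-up faithfully unpacks this, including the bookkeeping that $\ell_{p_i}=\ell_{p_j}$ iff $\pi(p_i)=\pi(p_j)$ and the observation that $m\le k\le g$ is exactly the range in which the Vandermonde minor exists.
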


\begin{lemma}\label{hyperpair} Let $D$ be an effective divisor.  Then the following are equivalent: \begin{itemize}
 \item $D$ is special, i.e. $\mathrm{h}^1(D) > 0$.
 \item $\deg D^h/2 + \deg D^r \le g-1$
\end{itemize} Moreover, if $\deg D^h/2 + \deg D^r \le g$, then
the inclusion $|D| \supset D^r + |D^h| = D^r + |(\deg D^h/2 )\kappa|$ is an equality. \end{lemma}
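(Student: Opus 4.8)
\emph{Proof proposal.} The plan is to translate everything through Serre duality and Lemma~\ref{lem:kappasections}. Set $h := \deg D^h/2$ and $r := \deg D^r$. Since $D^h$ is hyperelliptic it is $\pi^*E$ for an effective $E$ on $\PP^1$ of degree $h$, so $D^h \sim h\kappa$; and Lemma~\ref{lem:kappasections} identifies $K_C = (g-1)\kappa$. Hence $K_C - D \sim (g-1-h)\kappa - D^r$, and by Serre duality $D$ is special iff $(g-1-h)\kappa - D^r$ is linearly equivalent to an effective divisor. The whole lemma thus reduces to understanding, for an integer $m$, when $m\kappa - D^r$ is effective and (for the ``moreover'') how many sections it has; I will take $m = g-1-h$.

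The one genuinely geometric input is the structure of $D^r$ forced by maximality of $D^h$: over each point of $\PP^1$ the divisor $D^r$ contains at most one point of $C$, and with multiplicity at most $1$ if that point is a ramification point — otherwise a nonzero multiple of a fibre of $\pi$, which is a hyperelliptic divisor, could be transferred from $D^r$ into $D^h$. Writing $D^r = \sum_q n_q\, p_q$ over the distinct images $q = \pi(p_q)$, I claim that for effective $E''$ on $\PP^1$ one has $\pi^*E'' \ge D^r$ iff $E'' \ge E''_0 := \sum_q n_q\, q$, where $\deg E''_0 = \deg D^r = r$; here the unramified and ramified cases are checked separately, the point being that at a ramification point $n_q = 1$, so the factor $2$ coming from ramification gains nothing. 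Feeding this into Lemma~\ref{lem:kappasections} — which says every effective divisor in $|m\kappa|$ is a pullback for $m<g$ — gives: for $0\le m<g$, the system $|m\kappa - D^r|$ is nonempty iff $m \ge r$, in which case its members are precisely $\pi^*(E''_0 + F) - D^r$ for $F$ effective of degree $m-r$ on $\PP^1$, so $\mathrm{h}^0(m\kappa - D^r) = m - r + 1$; and $\mathrm{h}^0(m\kappa-D^r)=0$ when $0\le m<r$ (and trivially when $m<0$).

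Taking $m = g-1-h$, which lies in $[0,g)$ once $h\le g-1$ (the case $h\ge g$ makes $K_C-D$ of negative degree, so both sides of the equivalence fail), the criterion ``$m\ge r$'' is exactly $\deg D^h/2 + \deg D^r \le g-1$; this proves the equivalence. For the ``moreover'' assume $\deg D^h/2 + \deg D^r \le g$, i.e. $h + r \le g$; the subcase $h=g$ forces $r=0$ and is trivial, so assume $h\le g-1$ and $m = g-1-h\ge 0$. The inclusion $D^r + |D^h| = D^r + |h\kappa| \subseteq |D|$ is immediate, and by Lemma~\ref{lem:kappasections} its left side is a projective space of dimension $\mathrm{h}^0(h\kappa)-1 = h$, while $|D|$ has dimension $\mathrm{h}^0(D)-1\ge h$; so it is enough to see $\mathrm{h}^0(D) \le h+1$. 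Riemann--Roch and Serre duality give $\mathrm{h}^0(D) = \deg D + 1 - g + \mathrm{h}^0(m\kappa - D^r) = 2h + r + 1 - g + \mathrm{h}^0(m\kappa - D^r)$; since $h+r\le g$ forces $m\ge r-1$, the last term is $m-r+1$ if $m\ge r$ and $0$ if $m=r-1$, and in either case one computes $\mathrm{h}^0(D)=h+1$. An inclusion of projective spaces of equal dimension is an equality, so $|D| = D^r + |D^h|$.

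The main (really the only) obstacle is the structural claim about $D^r$ and the consequent equality $\deg E''_0 = \deg D^r$: one must verify that a ramification point cannot occur in $D^r$ with multiplicity $\ge 2$, since that is precisely what makes the two constants $g-1$ and $g$ in the statement come out correctly. Everything else is Riemann--Roch, Serre duality, and bookkeeping with Lemma~\ref{lem:kappasections}.
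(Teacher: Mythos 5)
Your proposal is correct and takes essentially the same route as the paper: both arguments rest on Serre duality together with Lemma~\ref{lem:kappasections}, and your structural claim that $D^r$ meets each fibre of $\pi$ in at most one point (with multiplicity one at ramification points) is precisely the paper's use of the fact that $D^r$ contains no hyperelliptic sub-divisor. The only real difference is in the ``moreover'': you conclude by a dimension count, showing $\mathrm{h}^0(D)=\deg D^h/2+1$ uniformly in both the special and the boundary case, whereas the paper shows directly that every member of $|D|$ has the same reduced part $D^r$ and then treats the boundary case $\deg D^h/2+\deg D^r=g$ separately via Riemann--Roch.
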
 \begin{proof} As $\HC^1(C,D)=\HC^0(C,K-D)$, there is
some effective divisor $E$ with $D+E = K$. Since $\HC^0(C,K)\cong\HC^0(\p^1,\Oo(g-1))$, we have $(D + E)^r = 0$ and thus $D^r = \tau(E^r)$.  In any case $D^r + \tau(D^r) + D^h + E^h = K$, so
$\deg D^r + \deg D^h / 2 \le g-1$. Conversely in this case, $D + \tau(D^r)$ is a section $n \kappa$ for $n = \deg D^r + \deg D^h / 2 \le g-1$, so we may find an effective divisor $E \in
\tau(D^r) + |(g-1-n)\kappa|$ such that $D + E = K$.  Finally if $\widetilde{D}$ is linearly equivalent to $D$, then since $\widetilde{D}+E = K$, we must have $\widetilde{D}^r = \tau(E^r) =
\tau(\tau(D^r)) = D^r$.

It remains to treat the case $\deg D^h/2 + \deg D^r = g$.  In this case, by what was just proven, $D$ is not special and the result follows from Riemann-Roch.
 \end{proof}

\begin{remark} The general divisor $D$ of
degree $g+1$ has $\HC^0(C,D) = 2$ but $\HC^0(C, \kappa^{-1}(D)) = 0$. \end{remark}

%
%
%
%
%

\begin{notation}
Twisting by $\kappa$, we identify $\otimes \kappa: J_d(C) \equiv J_{d+2}(C)$.
We denote by $\Theta_d$ the image of $C^{(d)}$.  By Lemma \ref{hyperpair},
we have
\[ \Theta_d(C) = \{\Ll \in J_{d+2r}(C)\,|\, \dim \HC^0(C,\Ll) > r \} \,\,\,\,\,\,\,\,\,\,\,\,\,\,\,\, \mbox{for $d+2r < g$} \]
\end{notation}

We record the following interesting fact, which is not however used in the paper.

%
%
%

\begin{proposition} \label{nash}
$C^{(d)}$ is the Nash blowup of $\Theta_{d}$.
\end{proposition}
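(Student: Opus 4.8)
The plan is to recall the universal characterization of the Nash blowup and match it with the Gauss map data already constructed. Recall that for a variety $Y$ of pure dimension $n$, the Nash blowup $\widetilde{Y} \to Y$ is the closure in $Y \times G(n, T Y|_{Y^{\mathrm{sm}}})$ of the graph of the tangent-plane map on the smooth locus $Y^{\mathrm{sm}}$; it is characterized by the universal property that a morphism $Z \to Y$ factors through $\widetilde Y$ iff the pullback of the sheaf $\Omega_Y$ (or rather its relevant quotient) is locally free of rank $n$, equivalently the tangent-plane map extends over $Z$. So I would first observe that $\Theta_d$ has pure dimension $d$ (its smooth locus is the image of the open set of $C^{(d)}$ where $|D| = \{D\}$, by Lemma \ref{lem:A}), and that the Abel--Jacobi map $A: C^{(d)} \to \Theta_d$ is a proper birational morphism, an isomorphism over $\Theta_d^{\mathrm{sm}}$ and with fibers $|D| = \PP^{\dim \HC^0(C,D)-1}$ over the singular points.

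Next I would produce the morphism $C^{(d)} \to \widetilde{\Theta_d}$. By the universal property it suffices to give a morphism $C^{(d)} \to G(d, T J)$ extending the tangent-plane map on the smooth locus and compatible with $A$. This is precisely the content of Corollary \ref{cor:Gauss}: the Gauss map $G(dA)$, a priori only rational, extends to an honest morphism $\overline{G}: C^{(d)} \to G(d, \HC^1(C,\oO_C)) = G(d, T_0 J)$ on a hyperelliptic curve (after translating $T_{A(D)}J$ back to $T_0 J$, which is harmless since $J$ is an abelian variety). Pairing $(A, \overline{G}): C^{(d)} \to \Theta_d \times G(d, TJ)$ thus lands in the closure of the graph, i.e. factors through $\widetilde{\Theta_d}$; call the resulting map $\psi: C^{(d)} \to \widetilde{\Theta_d}$. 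Since $\psi$ is compatible with the projections to $\Theta_d$, and both $C^{(d)} \to \Theta_d$ and $\widetilde{\Theta_d} \to \Theta_d$ are proper birational and isomorphisms over $\Theta_d^{\mathrm{sm}}$, the map $\psi$ is proper and birational.

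Finally I would argue $\psi$ is an isomorphism, which amounts to showing it is injective on points, or equivalently that $\overline{G}$ separates the points of each fiber $|D| = \PP(\HC^0(C,D))$ of $A$. This is the main obstacle, and it is exactly where hyperellipticity is used: a fiber over a singular point of $\Theta_d$ is, by Lemma \ref{hyperpair}, of the form $D^r + |D^h| = D^r + |(\deg D^h/2)\kappa|$, so it is a linear system of hyperelliptic divisors shifted by the fixed part $D^r$. Now Corollary \ref{cor:Gauss} computes $\overline{G}(E) = [dA(T_{\pi^*\pi_* E} C^{(2\deg E)})]$, and by Corollary \ref{tangents} and Lemma \ref{embedding} the image subspace is determined by the set of branch points $\{\pi(p) : p \in E\}$ via the embedding $\overline{G}/\tau$ of $(C/\tau)^{(d)}$ into the Grassmannian. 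Thus two effective divisors $E_1, E_2$ in the same fiber of $A$ with $\overline{G}(E_1) = \overline{G}(E_2)$ must have $\pi_* E_1 = \pi_* E_2$ as divisors on $\PP^1$; since both are of the form $D^r + (\text{pullback})$ with the same linear-equivalence class, and the non-hyperelliptic part $D^r$ is rigid inside the class (again Lemma \ref{hyperpair}), this forces $E_1 = E_2$. Hence $\psi$ is a bijective proper birational morphism to the normal-away-from-nothing... — more carefully, $\widetilde{\Theta_d}$ need not be normal, but a bijective proper birational morphism from $C^{(d)}$, which is smooth hence normal, to $\widetilde{\Theta_d}$ which maps finitely and birationally to $\Theta_d$: one concludes $\psi$ is an isomorphism by the universal property in the other direction, namely that $C^{(d)}$ itself satisfies the defining property of the Nash blowup (the tangent-plane pullback is locally free of rank $d$ because $C^{(d)}$ is smooth and $\overline G$ realizes the Gauss lift), and the Nash blowup is the terminal such object, so $\psi$ is forced to be the identity. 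I expect the delicate point to be bookkeeping the fixed part $D^r$ correctly and confirming $\overline{G}$ genuinely separates the hyperelliptic linear system, but both reduce to the Vandermonde non-degeneracy already exploited in Lemma \ref{embedding}.
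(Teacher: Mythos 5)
Your construction of the comparison map $\psi: C^{(d)} \to \widetilde{\Theta_d}$ via Corollary \ref{cor:Gauss}, and your argument that it separates points (reducing, via Lemma \ref{embedding} and Lemma \ref{hyperpair}, to the fact that $\overline{G}(E_1)=\overline{G}(E_2)$ forces $\pi_*E_1=\pi_*E_2$ and hence $E_1=E_2$ within a fiber of $A$), match the first half of the paper's proof. But the final step has a genuine gap. A proper bijective birational morphism onto a possibly non-normal target need not be an isomorphism --- you correctly flag this --- and the repair you offer does not work: the universal property of the Nash blowup says that any $Z \to \Theta_d$ over which the Gauss map extends factors \emph{through} $\widetilde{\Theta_d}$. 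Observing that $C^{(d)}$ has this property only reproduces the existence of $\psi$; it does not produce an inverse, because $\widetilde{\Theta_d}$ being terminal among such objects says nothing about a given object mapping to it being terminal itself. (Compare: the normalization of a cuspidal curve is bijective, proper, birational, and satisfies all sorts of lifting properties, yet is not an isomorphism onto the curve.) What is actually needed, and what you omit, is that $\psi$ is an immersion, i.e.\ that $A \times \overline{G}$ separates tangent vectors in addition to points.

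The paper supplies exactly this missing step. It rewrites $A \times \overline{G}$ as $A \times \pi^{(d)}: C^{(d)} \to J \times (C/\tau)^{(d)}$ using Lemma \ref{embedding}, and then shows injectivity on tangent spaces: a tangent vector killed by $dA$ comes from a section $f \in H^0(C, \oO_C(D))$, viewed as a meromorphic function with poles along $D$; using the residue pairing $\mathrm{Res}(f\,\pi^*\omega) = \mathrm{Res}\bigl((d\pi^{(d)}_* f)\,\omega\bigr)$ together with Lemma \ref{hyperpair} (which forces $f$ to be pulled back from $\PP^1$) and nondegeneracy of the residue pairing on $\pi_*D$, one sees $f \notin \ker(d\pi^{(d)})$. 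Hence $A \times \pi^{(d)}$ is a closed embedding whose image is the closure of the graph of the Gauss map, which identifies $C^{(d)}$ with $\widetilde{\Theta_d}$. To complete your proof you would need to add this tangent-vector computation (or some substitute for it); without it the argument does not close.
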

\begin{proof}

Corollary \ref{cor:Gauss} asserts that the map $A \times \overline{G}: C^{(d)} \to J \times G(d, T_0 J)$
maps $C^{(d)}$ to the Nash blowup of $\Theta_{d}$.  According to Lemma \ref{embedding}, this
factors as
$$C^{(d)} \xrightarrow{A \times \pi^{(d)}} J \times (C/\tau)^{(d)} \hookrightarrow J \times G(d, T_0 J).$$

Thus it suffices to show $A \times \pi^{(d)}$ is an embedding.
We first show it separates points.  By Lemma \ref{hyperpair},
we have $A(D_1) = A(D_2) \iff D_1 \in |D_2| \iff D_1^{r} = D_2^{r}$. If in addition
we had $\pi^{(d)}(D_1) = \pi^{(d)}(D_2)$ we would necessarily have
$D_1^h = D_2^h$ and hence $D_1 = D_2$.

Next, we show that $\phi$ separates tangent vectors. The derivative of the maps are
given by  $$dA: \HC^0(D,\oO_D(D))\rightarrow \HC^1(C,\oO_C)$$ and
$$ d \pi^{(d)}: \HC^0(D,\oO_D(D))\rightarrow \HC^0(\pi_*D,\oO_{\pi_*D}(\pi_*D )),$$

An element of $\mathrm{ker}(dA)$ must be in the image of $H^0(C,\oO_C(D))$; let
$f$ be such an element viewed as a nonconstant meromorphic function with poles
in $D$.  For any
$\omega \in \HC^0(E, \Omega_C|_E)$ we have the nondegenerate residue pairing
$\textrm{Res}( f\pi^*(\omega)) = \mathrm{Res}( (d \pi^{(d)}_*f) \omega)$.  Thus to show
$f\not\in\mathrm{ker}(d \pi^{(d)})$ it suffices to find $\omega \in \HC^0(\pi_*D, \Omega_C|_{\pi_* D})$
such that $\textrm{Res}( f\pi^*(\omega))\neq 0$. Since $\HC^0(C,K_C-D)\neq 0$, by lemma \ref{hyperpair} we know that $f=\bar{f}$ and thus $f=\pi^{-1}(g)$ for some $g\in \HC^0(\PP^1,\oO(E))$.  The existence of the desired $\omega$ now follows from
nondegeneracy of the residue pairing on $\pi_*D$.
\end{proof}

\subsection{Pairs of special divisors} \label{subsec:pairs}

Consider the following commutative diagram.  The maps $\Sigma$ add line bundles or divisors, and
the maps $A$ are the Abel map.  All quadrilaterals which do not contain squiggly edges
are cartesian.

$$
\xymatrix{
(A \times A)^{-1} \Sigma^{-1} (\Ll) \ar[ddd]_{(A\times A)_{\Ll}} \ar@{~>}[rrr] \ar[rd] & & &  |\Ll| \ar[ld] \ar[ddd] \\ 
& C^{(g-a)}\times C^{(g-b)} \ar@{~>}[r]^{\,\,\,\,\,\,\,\,\,\,\,\,\,\,\Sigma}  \ar[d]_{A \times A}
\ar@/_1pc/@{~>}[rd]^{\underline{A}} &C^{(2g-a-b)}\ar[d]^{A} &\\
& \Theta_{g-a} \times\Theta_{g-b} \ar[r]_{\Sigma} &  J_{2g-a-b} & \\
\Theta_{g-a} \cap \Ll - \Theta_{g-b} \ar[rrr] \ar[ru]
& & &  \{\Ll\}  \ar[ul]
}
$$


We are  interested in the space $\Theta_{g-a} \cap \Ll - \Theta_{g-b}$.
Unfortunately, $(A
\times A)^{-1} \circ s^{-1}(\Ll)$ need not be nonsingular.  We characterize its singular locus:

\begin{proposition} \label{prop:tangents} Consider $\underline{A} = s \circ (A \times A): C^{(g-a)} \times C^{(g-b)} \to J$.
Then $(D_1, D_2)$ is a singular point of the fibre
containing it iff $D_1 \cup D_2$ is special.  More generally, \[d\underline{A}(T_{(D_1, D_2)} C^{(g-a)} \times C^{(g-b)}) = dA(T_{D_1 \cup D_2} C^{(\deg D_1 \cup D_2)})\] has dimension
$\min(g, \deg (D_1 \cup D_2)^h /2 + \deg (D_1 \cup D_2)^r)$
\end{proposition}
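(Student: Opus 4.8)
The plan is to reduce the statement to the differential computation already carried out in Lemma \ref{lem:A} and Corollary \ref{cor:A}, and then to translate the resulting dimension count into the hyperelliptic language of $D^h$ and $D^r$ via Lemma \ref{hyperpair}. First I would observe that $\underline{A}$ factors as $C^{(g-a)}\times C^{(g-b)} \xrightarrow{\Sigma} C^{(2g-a-b)} \xrightarrow{A} J$, and that the differential of $\Sigma$ at $(D_1,D_2)$, as a map $\HC^0(D_1,\oO_{D_1}(D_1)) \oplus \HC^0(D_2,\oO_{D_2}(D_2)) \to \HC^0(D_1+D_2, \oO_{D_1+D_2}(D_1+D_2))$, has image exactly the subspace of deformations supported set-theoretically on $D_1 \cup D_2$; this is the content of Corollary \ref{cor:A}, which identifies $d\underline{A}(T_{(D_1,D_2)}\prod C^{(d_i)})$ with $dA(T_{D_1\cup D_2} C^{(\deg D_1\cup D_2)})$. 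So the image of $d\underline{A}$ equals the image of $dA$ restricted to the symmetric-power tangent space at the reduced-union divisor $D_1\cup D_2$.

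Next I would apply Lemma \ref{lem:A} to the divisor $E := D_1 \cup D_2$: its conclusion gives $\dim dA(T_E C^{(\deg E)}) = \deg E + 1 - \dim \HC^0(C, \oO_C(E))$ whenever this is at most $g$, and equals $g$ once $E$ becomes nonspecial in the relevant range. To evaluate $\dim \HC^0(C,\oO_C(E))$ I would invoke Lemma \ref{hyperpair}: writing $E = E^h + E^r$, we have $|E| = E^r + |E^h| = E^r + |(\deg E^h/2)\kappa|$ provided $\deg E^h/2 + \deg E^r \le g$, and Lemma \ref{lem:kappasections} gives $\dim |(\deg E^h/2)\kappa| = \deg E^h/2$; hence $\dim\HC^0(C,\oO_C(E)) = \deg E^r + \deg E^h/2 + 1$ in that range, so $\deg E + 1 - \dim\HC^0(C,\oO_C(E)) = (\deg E^h + \deg E^r) + 1 - (\deg E^r + \deg E^h/2 + 1) = \deg E^h/2 + \deg E^r$. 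Combining, $\dim d\underline{A}(T_{(D_1,D_2)}) = \min(g,\ \deg(D_1\cup D_2)^h/2 + \deg(D_1\cup D_2)^r)$, which is the asserted formula. The statement about singular points of the fibre then follows immediately: $(D_1,D_2)$ is a smooth point of its fibre precisely when $d\underline{A}$ is surjective, i.e. when the above quantity equals $g = \dim T_{\underline{A}(D_1,D_2)} J$, which by Lemma \ref{hyperpair} happens exactly when $D_1\cup D_2$ is \emph{not} special.

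The main obstacle I anticipate is purely bookkeeping around the boundary case, where $\deg(D_1\cup D_2)^h/2 + \deg(D_1\cup D_2)^r$ equals or exceeds $g$: here Lemma \ref{hyperpair} only directly controls $|E|$ when the quantity is $\le g$, and when it strictly exceeds $g$ one needs that $E$ is nonspecial and that $dA$ is already surjective at that stage, so that truncating by $\min(g,\cdot)$ is correct rather than an artifact. This is handled by Riemann–Roch together with the fact that $dA(T_E C^{(\deg E)})$ is the kernel of $\HC^1(C,\oO_C)\to\HC^1(C,\oO_C(E))$ (Lemma \ref{lem:A}), which becomes all of $\HC^1(C,\oO_C)$ once $\HC^1(C,\oO_C(E)) = 0$; monotonicity of the images as we enlarge $E$ finishes it. A minor additional point is to confirm that passing from $D_1+D_2$ (the divisor one lands on in $C^{(2g-a-b)}$) to its reduction $D_1\cup D_2$ does not change the image of the composed differential — but this is exactly Corollary \ref{cor:A}, so no new argument is needed.
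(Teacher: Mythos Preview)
Your approach is exactly the paper's: invoke Corollary~\ref{cor:A} for the identification of tangent images, Lemma~\ref{lem:A} for the dimension formula $\deg E + 1 - \dim\HC^0(C,\oO_C(E))$, and Lemma~\ref{hyperpair} to rewrite this hyperelliptically. The structure is sound and the conclusion is right.

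There is, however, a slip in your intermediate computation. From $|E| = E^r + |E^h|$ one gets $\dim|E| = \dim|E^h| = \deg E^h/2$ (adding the fixed divisor $E^r$ is a bijection of linear systems, it does not add $\deg E^r$ to the dimension), so
\[
\dim\HC^0(C,\oO_C(E)) = \deg E^h/2 + 1,
\]
not $\deg E^r + \deg E^h/2 + 1$ as you wrote. With your stated value the subtraction $(\deg E^h + \deg E^r) + 1 - (\deg E^r + \deg E^h/2 + 1)$ actually equals $\deg E^h/2$, not $\deg E^h/2 + \deg E^r$; you have two errors cancelling. Use the correct value of $\dim\HC^0$ and the arithmetic is honest. (Also, minor wording: $D_1 \cup D_2$ here is the pointwise-max divisor defined in the paper, not the reduced divisor; your use of Corollary~\ref{cor:A} is correct, but calling it the ``reduced-union divisor'' is misleading.)
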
 \begin{proof} The equality follows from Lemma \ref{lem:A} and
Corollary \ref{cor:A}.  According to Lemma \ref{lem:A}, either the tangent map is surjective, or $D_1 \cap D_2$ is special, and in the latter case, according to Lemmas \ref{lem:A} and \ref{hyperpair}, has image of the
stated dimension. \end{proof}

In order to understand $(D_1 \cup D_2)^r$ and $(D_1 \cup D_2)^h$, we will keep track of shared points between $D_1$ and $D_2$, and also of hyperelliptic pairs.

\begin{definition} Given a pair $(D_1, D_2)$ of hyperelliptic divisors, we define its canonical decomposition to be the septuple $(H_\cap,H_1,H_2,S, R_{\cap} ,R_1,R_2)$ determined by the
following formulas: \begin{itemize} \item $H_\cap = D_1^h \cap D_2^h$ \item $H_i = D_i^h - H_\cap$ \item $S = D_1^r \cap ( D_1^r + D_2^r )^h$ \item $R_\cap = (D_1^r  - S) \cap (D_2^r -
\overline{S})$ \item $R_1 = D_1^r - S - R_\cap$ and $R_2 = D_2^r - \overline{S} - R_\cap$ \end{itemize} or equivalently by the following conditions \begin{itemize} \item $H_\cdot$ are hyperelliptic.
\item $R_1 +R_2 + 2R_{\cap}$, $S + R_1 + R_\cap$, and $\overline{S} + R_2 + R_\cap$ each contain no hyperelliptic divisor. \item $H_1 \cap H_2 = \emptyset = R_1 \cap R_2$ \item
$D_1=H_\cap+H_1+S+R_{\cap}+R_1$ \item $D_2=H_\cap+H_2+\overline{S}+R_{\cap}+R_2$ \end{itemize} \end{definition}

Note that \begin{eqnarray*} (D_1 + D_2)^r &  = & 2R_{\cap} + R_1+R_2\\ (D_1 + D_2)^h & = & 2 H_\cap + H_1 + H_2 + S + \overline{S} \end{eqnarray*}

\begin{corollary} \label{cor:tdim} The dimension of the image of $d \underline{A}$ at $(D_1, D_2)$ can be expressed in terms of the canonical decomposition as \[\min(g,\frac{\deg H_\cap+\deg H_1+\deg H_2}{2}
+\deg S + \deg R_\cap + \deg R_1+ \deg R_2)\] \end{corollary}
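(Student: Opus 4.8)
The plan is to simply combine Proposition \ref{prop:tangents} with the formulas recorded immediately before the corollary, which express $(D_1+D_2)^r$ and $(D_1+D_2)^h$ in terms of the canonical decomposition. The only subtlety is that Proposition \ref{prop:tangents} is phrased in terms of the \emph{union} $D_1 \cup D_2$, not the sum $D_1 + D_2$, so the first step is to reconcile these two. I would note that the quantity appearing in the dimension formula, namely $\deg(D)^h/2 + \deg(D)^r$, can be read off directly from the multiplicities of $D$ at each point together with the pairing structure induced by $\tau$, and then check that replacing $D_1+D_2$ by $D_1\cup D_2$ changes neither $\deg(\cdot)^r$ nor $\deg(\cdot)^h/2$ in a way that affects the minimum with $g$ — or, more cleanly, observe that $\underline{A}$ factors so that its differential at $(D_1,D_2)$ only sees $D_1\cup D_2$, which is exactly the content of the displayed equality in Proposition \ref{prop:tangents}.

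Granting that, the core of the proof is bookkeeping. From the canonical decomposition we have $D_1 = H_\cap + H_1 + S + R_\cap + R_1$ and $D_2 = H_\cap + H_2 + \overline{S} + R_\cap + R_2$, so $D_1 \cup D_2$ is obtained by taking pointwise maxima of multiplicities. The disjointness conditions $H_1 \cap H_2 = \emptyset = R_1 \cap R_2$ ensure that the $H_i$ and $R_i$ contribute with multiplicity one each to the union, while the shared parts $H_\cap$ and $R_\cap$ contribute once rather than twice; the parts $S$ and $\overline{S}$, being supported on $D_1$ and $D_2$ respectively (and $\tau$-conjugate to each other), both survive into the union. So I would argue $D_1 \cup D_2 = H_\cap + H_1 + H_2 + S + \overline{S} + R_\cap + R_1 + R_2$. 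Next I would separate the hyperelliptic part from the residual part: by the characterization of the canonical decomposition, $H_\cap + H_1 + H_2 + S + \overline{S}$ is the maximal hyperelliptic subdivisor of $D_1 \cup D_2$ — one must use here that $S + \overline{S}$ is hyperelliptic (since $S$ and $\overline{S}$ are $\tau$-conjugate effective divisors) and that $R_1 + R_2 + 2R_\cap$, $S + R_1 + R_\cap$, and $\overline{S} + R_2 + R_\cap$ contain no hyperelliptic divisor, so nothing further can be absorbed. Hence $(D_1 \cup D_2)^h = H_\cap + H_1 + H_2 + S + \overline{S}$ and $(D_1 \cup D_2)^r = R_\cap + R_1 + R_2$.

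Plugging these into the dimension formula of Proposition \ref{prop:tangents} gives
\[
\dim d\underline{A}(T_{(D_1,D_2)}) = \min\!\left(g,\ \frac{\deg H_\cap + \deg H_1 + \deg H_2 + \deg S + \deg \overline{S}}{2} + \deg R_\cap + \deg R_1 + \deg R_2\right),
\]
and since $\deg \overline{S} = \deg S$ we may rewrite $\tfrac12(\deg S + \deg \overline{S}) = \deg S$, which yields exactly the asserted expression. The main obstacle — really the only place requiring care — is the identification of $(D_1\cup D_2)^h$ and $(D_1\cup D_2)^r$ with the pieces of the canonical decomposition: one has to be attentive that $S$ lies in $D_1$ but $\overline{S}$ lies in $D_2$, so that both appear (not just one) in the union, and that the ``no hyperelliptic subdivisor'' conditions are genuinely strong enough to prevent $R_\cap + R_1 + R_2$ from contributing anything extra to the hyperelliptic part. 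Everything else is formal, using the degree additivity of the canonical decomposition formulas already displayed.
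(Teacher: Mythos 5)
Your proposal is correct and is essentially the paper's own (implicit) argument: the corollary is stated without proof, the intended derivation being exactly to substitute the canonical decomposition of $D_1\cup D_2$ — namely $(D_1\cup D_2)^h=H_\cap+H_1+H_2+S+\overline{S}$ and $(D_1\cup D_2)^r=R_\cap+R_1+R_2$ — into the dimension formula of Proposition \ref{prop:tangents}. You correctly noted the one real subtlety, that the proposition uses the union rather than the sum (the displayed formulas before the corollary concern $D_1+D_2$ and would give the wrong coefficients on $H_\cap$ and $R_\cap$), and your bookkeeping with $\deg\overline{S}=\deg S$ reproduces the stated expression.
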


\begin{definition}
We write $e(L) := \max \{m\, | \,  \HC^0(C, \kappa^{-m} L) \ne 0 \}$.
\end{definition}

In $J^{2g-a-b}$, we have $e(L) \ge (2g-a-b -1)/2$.  Taking the convention that $\Theta_k = A(C^{(k)})$ even for $k \ge g$,
we have
$\Theta_{k-2E} = \{L \in J^{k}: e(L) \ge E\}$.

\begin{cor} \label{cor:dimestimate}
The irreducible components of the singular locus of $\underline{A}^{-1}(L)$ are enumerated by finitely many expressions of the form
$L = \mathcal{O}(R_1
+ R_2 + 2 R_\cap) \otimes \kappa^{n}$.  Each is of dimension at most $e(L)$.
Moreover, if $e(L) \le g - a - b$, then $\underline{A}^{-1}(L)$ and thus also $\Theta_{g-a} \cap L - \Theta_{g-b}$ have the expected dimension
$g- a - b$.
\end{cor}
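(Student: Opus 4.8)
The plan is to split $\underline A^{-1}(L)$ into the smooth locus of the fibre and the singular locus, controlling each separately. By Proposition~\ref{prop:tangents}, $(D_1,D_2)$ is a smooth point of $\underline A^{-1}(L)$ exactly when $d\underline A$ is surjective there, in which case the fibre is locally smooth of dimension $\dim(C^{(g-a)}\times C^{(g-b)})-\dim J=g-a-b$. So every irreducible component of $\underline A^{-1}(L)$ that is not contained in the singular locus
\[
\Sigma=\{(D_1,D_2)\in\underline A^{-1}(L):\ D_1\cup D_2\ \text{is special}\}
\]
has dimension exactly $g-a-b$, and everything reduces to bounding $\Sigma$ and enumerating its components.

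I would stratify $\Sigma$ by the degrees occurring in the canonical decomposition $(H_\cap,H_1,H_2,S,R_\cap,R_1,R_2)$ of $(D_1,D_2)$. Since $(D_1+D_2)^h=2H_\cap+H_1+H_2+S+\overline{S}$ is hyperelliptic, $\mathcal O((D_1+D_2)^h)=\kappa^{n}$ with $n:=\tfrac12\deg(D_1+D_2)^h$, whence
\[
L=\mathcal O(D_1+D_2)=\mathcal O(2R_\cap+R_1+R_2)\otimes\kappa^{n},\qquad\text{so}\qquad \mathcal O(2R_\cap+R_1+R_2)=\kappa^{-n}L.
\]
As $\kappa^{-n}L$ carries the effective representative $(D_1+D_2)^r$, we have $0\le n\le e(L)$, which already restricts $n$ to a finite set. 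Fix $n$. The surviving data are then the hyperelliptic pieces $H_\cap,H_1,H_2$ (each of prescribed degree, hence a point of a symmetric power of $\PP^1$), the divisor $S$ (of prescribed degree, a point of a symmetric power of $C$ cut by open conditions), and a triple $(R_\cap,R_1,R_2)$ with $\mathcal O(2R_\cap+R_1+R_2)=\kappa^{-n}L$; the $(H_\cap,H_1,H_2,S)$-part contributes dimension $\tfrac12(\deg H_\cap+\deg H_1+\deg H_2)+\deg S=n$.

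The substance is that the triples $(R_\cap,R_1,R_2)$ are finite in number, which simultaneously yields the finite list of ``expressions'' $L=\mathcal O(R_1+R_2+2R_\cap)\otimes\kappa^{n}$ in the statement and shows each stratum has dimension $\le n\le e(L)$. Indeed $2R_\cap+R_1+R_2=(D_1+D_2)^r$ contains no hyperelliptic divisor, and a short case check on a would-be hyperelliptic pair $\{p,\tau p\}$ (or $2p$ over a branch point) occurring in $R_\cap\cup R_1\cup R_2$ shows that then it would already occur in $2R_\cap+R_1+R_2$; so $R_\cap\cup R_1\cup R_2$ contains no hyperelliptic divisor either. By Corollary~\ref{cor:tdim} the hypothesis $(D_1,D_2)\in\Sigma$ forces $\deg R_\cap+\deg R_1+\deg R_2<g$, so $R_\cap\cup R_1\cup R_2$ has degree $<g$ and no hyperelliptic part; Lemma~\ref{hyperpair} then makes it rigid, $\dim\HC^0(C,R_\cap\cup R_1\cup R_2)=1$. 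Corollary~\ref{cor:finite}, applied with coefficients $2,1,1$, now says only finitely many divisors of $|\kappa^{-n}L|$ are of the form $2R_\cap+R_1+R_2$, and each determines $(R_\cap,R_1,R_2)$ up to finitely many combinatorial choices. Hence each stratum is finite over an at most $n$-dimensional base; taking closures, the irreducible components of $\Sigma$ are finite in number and of dimension $\le e(L)$.

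Finally, assembling the pieces: if $e(L)\le g-a-b$ then $\Sigma$ has dimension $\le e(L)\le g-a-b$, while the complementary smooth part of $\underline A^{-1}(L)$ has dimension $g-a-b$; since $\underline A$ is dominant in the relevant range, every component of $\underline A^{-1}(L)$ has dimension $\ge g-a-b$, so $\underline A^{-1}(L)$ is pure of dimension $g-a-b$. Passing through the birational map $A\times A$ and the projection identifying $(A\times A)(\underline A^{-1}(L))$ with $\Theta_{g-a}\cap(L-\Theta_{g-b})$ transfers the conclusion to the latter. The main obstacle is precisely this dimension bound on the singular strata: the naive estimate, letting $2R_\cap+R_1+R_2$ range over the full linear system $|\kappa^{-n}L|$, is far too weak, and what makes the argument go is the observation that the doubled multiplicity of $R_\cap$ together with the absence of hyperelliptic pairs forces the parametrizing configuration $(R_\cap,R_1,R_2)$ to be rigid, so that Corollary~\ref{cor:finite} applies.
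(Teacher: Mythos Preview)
Your argument is correct and follows the same route as the paper: stratify the singular locus by the canonical decomposition, observe that $\mathcal O(R_1+R_2+2R_\cap)=\kappa^{-n}L$ with $n\le e(L)$, use the absence of hyperelliptic pairs in $R_1\cup R_2\cup R_\cap$ together with the degree bound coming from Corollary~\ref{cor:tdim} to get $\dim\HC^0(C,R_1\cup R_2\cup R_\cap)=1$, and then invoke Corollary~\ref{cor:finite} to pin down finitely many $(R_\cap,R_1,R_2)$, leaving only the $(H_\cdot,S)$-moduli of dimension at most $n$.  One small correction: the equality $\tfrac12(\deg H_\cap+\deg H_1+\deg H_2)+\deg S=n$ is not quite right; from $2\deg H_\cap+\deg H_1+\deg H_2+2\deg S=2n$ one gets $\tfrac12(\deg H_\cap+\deg H_1+\deg H_2)+\deg S=n-\tfrac12\deg H_\cap\le n$, but you only use the inequality anyway.
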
 \begin{proof} Let $(D_1, D_2)$ be a point in the singular locus.  Denote the pieces of the canonical decomposition as above. Then $R_1 + R_2 + R_\cap$ is a divisor of
degree at most $g-1$ containing no hyperelliptic pairs; it follows that $\dim \HC^0(C, \oO(R_1 + R_2 + R_\cap)) = 1$.  A fortiori the same is true of their union.  On the other hand we have
$\oO(R_1 + R_2 + 2 R_\cap) = \kappa^{-m} \Ll$ for some $m \leq e(L)$.  By Corollary \ref{cor:finite}, only finitely many choices of the $R_\cdot$ are possible.  Moreover the possibilities for the
corresponding $H_\cdot, S$ vary in a family of dimension $\deg (H_\cap + H_1 + H_2 + S) / 2 \le (\deg \Ll - \deg  \kappa^{-m} \Ll) / 2 = m$.
\end{proof}

\section{Topology of special divisors on hyperelliptic curves} \label{sec:top}

In this Section we take \'etale cohomology with $\ell$-adic coefficients for some
$\ell$ prime to the characteristic of our base field $k$, as in \cite{SGA4, SGA45, D}.
We also require the theory of perverse sheaves \cite{BBD}.

\subsection{Semismall maps and IC sheaves} \label{subsec:semismall}

\begin{lemma} \label{lem:bm1} \cite{IY}\footnote{The origin of this result is slightly mysterious.  The case $i=g-1$ is used in \cite{N} who alleges that it is proved in \cite{BB}.  However
in \cite{BB} the authors deal exclusively with the {\em non}-hyperelliptic case, where the statement is not true and not claimed.  Indeed we asked Bressler, who confirmed that no such
statement appeared in the paper and moreover claimed not to know a proof of the result.  In \cite{IY}, the result is attributed to \cite{BB} for $i = g-1$ and proven in general.} The
$\Theta_i$ are homology manifolds, i.e.,  $\mathrm{IC}_{\Theta_i} = \Qlbar[\dim \Theta_i]$. \end{lemma}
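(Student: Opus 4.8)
The plan is to realize the Abel--Jacobi map $A\colon C^{(d)}\to\Theta_d$ (treating $d\le g-1$; the cases $d\ge g$ are trivial since then $\Theta_d=J$) as a \emph{semismall} resolution, and to read off the statement from the decomposition theorem by induction on $d$. First I would use Lemma~\ref{hyperpair} to describe the map: for $\mathcal{L}\in\Theta_d$ the fibre $A^{-1}(\mathcal{L})=|\mathcal{L}|$ is a projective space $\PP^r$ with $r+1=\dim\HC^0(C,\mathcal{L})$, and the locus $S_r\subset\Theta_d$ on which $r$ is constant is exactly $\Theta_{d-2r}\setminus\Theta_{d-2r-2}$, of dimension $d-2r$ and hence codimension $2r=2\dim\PP^r$. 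Thus $A$ is semismall, with every nonempty stratum $S_r$ relevant. I would then check that each $S_r$ is smooth: writing $Z_m\subset C^{(m)}$ for the open locus of divisors with $D^h=0$, Lemmas~\ref{lem:A} and~\ref{hyperpair} show that $A$ restricts on $Z_{d-2r}$ to a proper, injective, unramified morphism, hence a closed immersion, onto $S_r$; so $S_r$ is a dense smooth open subset of $\Theta_{d-2r}$. Finally I would observe that the local system $R^{2r}(A|_{A^{-1}(S_r)})_*\Qlbar$ entering the decomposition is the top cohomology of the $\PP^r$-fibres, of rank one with trivial monodromy, hence equal to $\Qlbar(-r)$; concretely $|\mathcal{L}|=D^r+\pi^*|\oO_{\PP^1}(r)|$ with $\pi^*|\oO_{\PP^1}(r)|$ a fixed $\PP^r$, so $A^{-1}(S_r)\to S_r$ is even a \emph{trivial} $\PP^r$-bundle.

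Next, the decomposition theorem for the semismall map $A$ gives
\[
RA_*\Qlbar[d]\;\cong\;\bigoplus_{r\ge 0}\mathrm{IC}_{\Theta_{d-2r}}\bigl(\Qlbar(-r)\bigr),
\]
the $r$-th summand being the intermediate extension of $\Qlbar(-r)[d-2r]$ from $S_r$. I would prove $\mathrm{IC}_{\Theta_d}=\Qlbar_{\Theta_d}[d]$ by induction on $d$, the base cases $\Theta_0$ (a point) and $\Theta_1\cong C$ being smooth. Assuming the result for all $\Theta_{d-2r}$ with $r\ge 1$, each such summand becomes the constant sheaf $\Qlbar_{\Theta_{d-2r}}[d-2r](-r)$. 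Taking stalks at a point $x\in S_r$: the left side has stalk $R\Gamma(\PP^r,\Qlbar)[d]=\bigoplus_{j=0}^{r}\Qlbar(-j)[d-2j]$; on the right, the summands with $r'>r$ have vanishing stalk at $x$ (since $x\notin\Theta_{d-2r-2}\supseteq\Theta_{d-2r'}$), those with $1\le r'\le r$ contribute $\Qlbar(-r')[d-2r']$ by the inductive hypothesis, and the $r'=0$ summand contributes $i_x^*\mathrm{IC}_{\Theta_d}$. Matching the two sides degree by degree (equivalently by weight) forces $i_x^*\mathrm{IC}_{\Theta_d}=\Qlbar[d]$.

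To finish I would argue that constant stalks imply the constant sheaf. By the previous step $\mathrm{IC}_{\Theta_d}$ is concentrated in cohomological degree $-d$, so $\mathrm{IC}_{\Theta_d}=\mathcal{F}[d]$ with $\mathcal{F}=\mathcal{H}^{-d}(\mathrm{IC}_{\Theta_d})$ a constructible sheaf restricting to $\Qlbar$ on the smooth locus $j\colon U\hookrightarrow\Theta_d$. Since $\mathcal{H}^{-\dim X}(\mathrm{IC}_X)$ is always the direct image $j_*$ of the local system on $U$, we get $\mathcal{F}=j_*\Qlbar_U$; and $\Theta_d$ is normal (classical, cf.~\cite{ACGH}; alternatively $A$ has connected fibres from a normal source, so $\Theta_d$ is geometrically unibranch), whence $j_*\Qlbar_U=\Qlbar_{\Theta_d}$ and $\mathrm{IC}_{\Theta_d}=\Qlbar_{\Theta_d}[d]$, as claimed.

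I expect the main obstacle to be the first step: correctly identifying the stratification of the Abel map and verifying that the local systems carried by the relevant strata are trivial. Once the fibres are known to be projective spaces --- which is precisely what Lemma~\ref{hyperpair} delivers --- the decomposition-theorem bookkeeping and the final passage from constant stalks to the constant sheaf are essentially formal.
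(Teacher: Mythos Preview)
Your proposal is correct and follows essentially the same approach as the paper: both use the Borho--Macpherson method, exploiting that the Abel--Jacobi map $C^{(d)}\to\Theta_d$ is semismall with relevant strata the $\Theta_{d-2r}^\circ$ and $\PP^r$-fibres, then argue inductively via the decomposition theorem that each IC summand is a shifted constant sheaf. You supply somewhat more justification than the paper does at a few points---notably the smoothness of the strata, the triviality of the $\PP^r$-bundle, and the final passage from ``stalks concentrated in one degree'' to ``constant sheaf'' via normality of $\Theta_d$---but the architecture of the argument is the same.
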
 \begin{proof} We recall
from \cite{IY} the proof. One applies the method of Borho and Macpherson \cite{BM}. The Abel-Jacobi map $A: C^{(i)} \mapsto \Theta_i$ has fibres $\p^j$ over  $\Theta_{i-2j}^\circ := \Theta_{i-2j}
\setminus \Theta_{< i-2j}$.  It follows that $A$ is {\em semi-small}, i.e., the locus $\Delta_\delta$ where the fibre dimension is $\ge \delta$ is of codimension $\ge 2\delta$.  This
implies first of all that $A_* \Qlbar[i]$ is perverse, and moreover that the terms which may appear in its Beilinson-Bernstein-Deligne \cite{BBD} decomposition are IC sheaves whose
supports are the components of $\Delta_\delta$ of codimension exactly $2\delta$.  We see that $\Delta_{j} = \Theta_{i-2j}$.  We now argue by induction that the contribution of the stratum
$\Theta_{i-2j}$ is precisely $\mathrm{IC}_{\Theta_{i-2j}}$.  It suffices to check this on $\Theta_{i-2j}^\circ$.  Here,
the perverse sheaf $\pi_* \Qlbar[i]$ restricts to a local system with fibre $\Qlbar[i] \oplus \Qlbar[i-2] \oplus \ldots \Qlbar[i-2j]$; the local system is moreover constant because the cohomology of
$\p^j$ has a canonical generator.  As a point $p \in \Theta_{i-2j}^\circ$ lies in the closure of all $\Theta_{i-2k}^\circ$ for $k < j$, we have $\Qlbar[i] \oplus \Qlbar[i-2] \oplus \ldots \Qlbar[i-2j] =
F[i-2j]_p \oplus \bigoplus_{k < j} \mathrm{IC}_{\Theta_{i-2k}}|_p$, where $F$ is the local system giving the contribution of $\Theta_{i-2j}$.  But since evidently
$\mathcal{H}^{-i+2k}(\mathrm{IC}_{\Theta_{i-2k}}) |_p = \Qlbar$,
all summands are accounted for by these lowest cohomology groups and we moreover have $F = \Qlbar$.  Additionally we see that the IC sheaves have no stalk
cohomology except in minimum degree, and hence are (trivial) local systems. \end{proof}

\begin{lemma} \label{thetaislci}
For each $m\gg 0$, $\Theta_i \subset \Theta_{i+1}$ is set-theoretically
the zero section of an ample line bundle defined over $\F_{q^m}$.
\end{lemma}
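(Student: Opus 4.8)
The plan is to realize the inclusion $\Theta_i\subset\Theta_{i+1}$ as the zero locus of a section of an explicit line bundle, and then show that line bundle is ample over a suitable finite extension. First, recall from the Notation preceding the statement that we may identify $J_i(C)\cong J_{i+2}(C)$ via $\otimes\kappa$, so that $\Theta_i$ and $\Theta_{i+1}$ both sit inside a fixed abelian variety $J$, with $\Theta_i=A(C^{(i)})$ the image of the Abel--Jacobi map. The key classical fact (see \cite[Ch.~1]{ACGH}) is that for $d<g$ the image $\Theta_d\subset\Theta_{d+1}$ is a theta-divisor-like locus: more precisely, after translating, $\Theta_{i+1}$ is a translate of the usual theta divisor when $i+1=g-1$, and in general $\Theta_d$ can be cut out inside $\Theta_{d+1}$ by one equation because $C^{(d)}\hookrightarrow C^{(d+1)}$ is a divisor (add a fixed point) and the codimension of $\Theta_d$ in $\Theta_{d+1}$ is one. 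So the first step is: identify $\Theta_i$ inside $\Theta_{i+1}$ as the vanishing of a section $s$ of a line bundle $\mathcal{L}$, namely the restriction to $\Theta_{i+1}$ of the line bundle on $J$ associated to the divisor $\Theta_{i+1}$ itself (a translate of $\Theta_{g-1}$, hence a principal polarization, hence ample on $J$), suitably translated so its restriction to $\Theta_{i+1}$ has $\Theta_i$ as a component of its zero locus.

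More concretely, I would argue as follows. For a fixed point $p\in C$, translation by $-A(p)$ maps $\Theta_{i+1}$ into $\Theta_{i+1}$ shifted, and the preimage of $\Theta_i$ under the inclusion $C^{(i)}\xhookrightarrow{+p}C^{(i+1)}$ shows set-theoretically that $\Theta_i+A(p)\subset\Theta_{i+1}\cap(\Theta_{i+1}+A(p)-A(q))$ for generic $q$, or better: choose an ample divisor class $\Xi$ on $J$ (e.g. a translate of $\Theta_{g-1}$) such that its intersection with $\Theta_{i+1}$ is set-theoretically supported on $\Theta_i$. The existence of such $\Xi$ follows from the fact that $\Theta_i$ is an irreducible ample divisor inside the projective variety $\Theta_{i+1}$ — indeed $\Theta_i$ generates a ray in the effective cone, and after scaling by a suitable integer $m\gg0$ the class $m[\Theta_i]$ on $\Theta_{i+1}$ is the restriction of an ample class from $J$ (here I use that $\mathrm{Pic}(J)\to\mathrm{Pic}(\Theta_{i+1})$ need not be surjective, which is exactly why we only get the result up to a multiple $m$ and over an extension $\F_{q^m}$). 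This is where the phrase "for each $m\gg0$" in the statement enters: we replace the (possibly non-rational, or not restricting from $J$) class $[\Theta_i]$ by $m[\Theta_i]$ for $m$ large and divisible enough that it both becomes very ample on $\Theta_{i+1}$ and extends to $J$; over $\F_{q^m}$ the point $A(p)$ used to rigidify becomes rational.

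The main obstacle, I expect, is the passage from "$\Theta_i$ is a codimension-one subvariety of $\Theta_{i+1}$" to "$\Theta_i$ is \emph{set-theoretically} the zero section of an honest line bundle \emph{on $\Theta_{i+1}$} that is moreover ample and defined over a controlled finite field". The codimension-one and irreducibility facts are standard, but controlling the Picard group of the singular variety $\Theta_{i+1}$ — in particular knowing that some multiple of $[\Theta_i]$ lifts from $J$ and is ample — requires care. I would handle this by working on $J$ throughout: take $N\gg0$ so that $\mathcal{O}_J(N\cdot\Theta_{g-1})$ is very ample, pick two translates whose scheme-theoretic intersection with $\Theta_{i+1}$ agrees, away from $\Theta_i$, with a fixed effective divisor avoiding $\Theta_i$, and subtract. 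Concretely, one uses that $\Theta_{i+1}\cap(\Theta_{i+1}+v)$ for generic $v$ is $\Theta_i$-plus-extra, and chooses $v$ in the countable-but-dense set of torsion-plus-$A(C)$ classes so that everything is defined over some $\F_{q^m}$; enlarging $m$ absorbs the finitely many rationality constraints, giving the result for all $m$ in an arithmetic progression, which is what "for each $m\gg0$" means here. The rest — that the zero section of an ample line bundle pulls back the set-theoretic equation $\Theta_i\subset\Theta_{i+1}$ — is then immediate.
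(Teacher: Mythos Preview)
Your proposal does not close the main gap. You correctly isolate the task: find an ample divisor $\Xi$ on $J$ whose intersection with $\Theta_{i+1}$ is set-theoretically $\Theta_i$. But then you claim ``the existence of such $\Xi$ follows from the fact that $\Theta_i$ is an irreducible ample divisor inside $\Theta_{i+1}$'', and try to reach it by Picard-group and numerical-class considerations (lifting $m[\Theta_i]$ to $J$, subtracting translates, etc.). This kind of argument at best puts you in the right numerical or linear equivalence class; it does not hand you an actual effective divisor on $J$ whose intersection with $\Theta_{i+1}$ is \emph{supported exactly on} $\Theta_i$. A generic section of the correct ample bundle will vanish on some other codimension-one locus in $\Theta_{i+1}$, not on $\Theta_i$. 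You also conflate the $m$ in the statement with a multiple of a divisor class; in the paper $m$ is purely the degree of a field extension over which a suitable rational point exists.

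The paper's proof is not abstract at all: it writes down a specific translate of $\Theta_{g-1}$ (which is already ample on $J$) and checks, using the hyperelliptic structure, that it cuts $\Theta_{i+1}$ precisely in a translate of $\Theta_i$. Concretely, choose a non-Weierstrass point $Q\in C(\F_{q^m})$ (this is where $m\gg 0$ enters) and verify
\[
(\Theta_{g-1}+Q)\ \cap\ \bigl(\Theta_{i+1}+(g-i-1)\tau(Q)\bigr)\ =\ \Theta_i+\kappa+(g-i-2)\tau(Q).
\]
The verification is a one-line application of Lemma~\ref{hyperpair}: if $D_{i+1}+(g-i-1)\tau(Q)$ is linearly equivalent to an effective divisor containing $Q$, then either $Q\in D_{i+1}$ or $D_{i+1}$ contains a hyperelliptic pair; either way $D_{i+1}$ lies in the stated translate of $\Theta_i$. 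This hyperelliptic-specific identity is the missing idea in your proposal; without it, no amount of abstract ampleness or rationality bookkeeping produces the required section.
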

\begin{proof}
It is well known that $\Theta_{g-1}$ is an ample divisor. For $m\gg_c 0$ there exists a non-Weierstrass point
$Q \in C(\F_{q^m})$.  For $i< g-1$,
\begin{equation}\label{ThetaLCI}
(\Theta_{g-1}+Q)\cap (\Theta_{i+1}+(g-i-1)\tau(Q))=\Theta_i+\kappa+(g-i-2)\tau(Q).
\end{equation}
Indeed, consider an effective divisor $D_{i+1}$ of degree $i+1$
such that $D_{i+1}+(g-i-1)\tau(Q)$ is linearly equivalent to a divisor containing $Q$. By Lemma $\ref{hyperpair}$ we
know that either $Q\in D_{i+1}$, or $D_{i+1}$ contains a hyperelliptic pair. In either case the conclusion is easily seen to follow.
\end{proof}

From the Lefschetz hyperplane theorem, we deduce:

\begin{cor}\label{stability1} We have $\HC^j(\Theta_i \otimes \overline{F}_q,\Qlbar)\cong \HC^j(J \otimes \overline{F}_q,\Qlbar)$ for all $0\leq j<i$, and the isomorphism preserves the generalized eigenspaces of Frobenius. \end{cor}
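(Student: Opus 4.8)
The plan is to iterate the Lefschetz hyperplane theorem along the chain of inclusions produced by Lemma \ref{thetaislci}. By that lemma, for $m \gg 0$ each inclusion $\Theta_i \subset \Theta_{i+1}$ (after the identification $\otimes \kappa$ and translation) is set-theoretically the zero locus of a section of an ample line bundle $\mathcal{M}_{i+1}$ on $\Theta_{i+1}$, all defined over a common finite extension $\F_{q^m}$. Since \'etale cohomology with $\Qlbar$-coefficients is insensitive to passing from a scheme to its reduction, and since we may work over $\overline{\F}_q$ where the line bundle becomes genuinely the ambient one, I would like to invoke the \'etale Lefschetz hyperplane theorem: if $Y \subset X$ is an ample divisor in a projective variety $X$ of dimension $n$, then $\HC^j(X, \Qlbar) \to \HC^j(Y, \Qlbar)$ is an isomorphism for $j < n - 1$ and injective for $j = n-1$. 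The care needed here is that $X = \Theta_{i+1}$ is singular, so one should use the version of the Lefschetz hyperplane theorem valid for singular varieties (e.g. via the fact established in Lemma \ref{lem:bm1} that $\Theta_{i+1}$ is a homology manifold, so that $\mathrm{IC}$ coincides with the constant sheaf and the Artin–Grothendieck–Lefschetz vanishing for affine complements applies cleanly: $X \setminus Y$ is affine of dimension $n$, hence $\HC^j_c(X\setminus Y) = 0$ for $j < n$, and the long exact sequence of the pair gives the claimed range).

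Concretely, the steps are: (1) Fix $c$, hence $i$, and choose $m$ large enough that Lemma \ref{thetaislci} applies simultaneously to every inclusion $\Theta_i \subset \Theta_{i+1} \subset \cdots \subset \Theta_{g-1}$; note $\Theta_{g-1}$ has dimension $g-1$ and $\Theta_{g-1} \subset J$ is an ample divisor as well. (2) Base change everything to $\overline{\F}_q$, where each $\Theta_i$ (as a reduced scheme) is cut out in $\Theta_{i+1}$ by an honest ample divisor. (3) Apply the Lefschetz hyperplane theorem to $\Theta_{g-2} \subset \Theta_{g-1} \subset J$: for $\Theta_{g-1} \subset J$ one gets $\HC^j(J) \cong \HC^j(\Theta_{g-1})$ for $j < g-1$; for $\Theta_{g-2} \subset \Theta_{g-1}$ one gets $\HC^j(\Theta_{g-1}) \cong \HC^j(\Theta_{g-2})$ for $j < g-2$. (4) Iterate downward: at each stage $\Theta_i \subset \Theta_{i+1}$ contributes an isomorphism in degrees $j < i$, and composing all of these yields $\HC^j(J, \Qlbar) \cong \HC^j(\Theta_i, \Qlbar)$ for $j < i$, which is the asserted range. (5) Finally, observe Frobenius-equivariance: all the varieties and the ample divisors are defined over $\F_{q^m}$, and the Lefschetz isomorphisms are induced by the pullback maps on cohomology of $\F_{q^m}$-morphisms, hence commute with the geometric Frobenius; therefore they preserve generalized eigenspaces of Frobenius, and the eigenvalues on $\HC^j(\Theta_i)$ are exactly those on $\HC^j(J)$ (possibly up to the implicit scaling built into the normalization, which is trivial here since the maps are closed immersions, not finite covers).

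The main obstacle is making the Lefschetz hyperplane theorem genuinely apply despite the singularities of the $\Theta_i$ and the fact that the divisor is only \emph{set-theoretically} a zero section. For the set-theoretic issue, the resolution is clean: reducedness doesn't matter for \'etale $\Qlbar$-cohomology, and ampleness of the line bundle is what is actually used, so one simply replaces ``zero scheme of a section'' by ``support of a section of an ample line bundle'', which is still an ample divisor in the reduced structure. For the singularity issue, I would cite the standard fact that Artin's affine vanishing theorem ($\HC^j_c(U, \Qlbar) = 0$ for $j < \dim U$, $U$ affine) holds for arbitrary varieties, so that the long exact sequence of the open-closed decomposition $\Theta_i \hookrightarrow \Theta_{i+1} \hookleftarrow \Theta_{i+1} \setminus \Theta_i$ gives the Lefschetz range directly, with no smoothness hypothesis; alternatively one can cite Goresky–MacPherson / the fact that $\Theta_{i+1}$ is a homology manifold (Lemma \ref{lem:bm1}) to get Poincar\'e–Lefschetz duality if one wants the surjectivity/injectivity at the edge degree. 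Since the statement only claims isomorphism for $j < i$ strictly, the affine vanishing argument suffices and is the cleanest route; I expect this to be a short paragraph rather than a real difficulty, the genuinely delicate points having been absorbed into Lemma \ref{thetaislci} and Lemma \ref{lem:bm1}.
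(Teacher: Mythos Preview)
Your approach is the same as the paper's: iterate Lefschetz hyperplane along the chain furnished by Lemma~\ref{thetaislci}, and deduce the Frobenius compatibility from the fact that everything is defined over $\F_{q^m}$.  Your first paragraph is exactly right, including the observation that Lemma~\ref{lem:bm1} (the $\Theta_k$ are homology manifolds) is what makes the affine-complement argument go through.

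There is, however, a genuine slip in your final paragraph.  The assertion that ``Artin's affine vanishing theorem $\HC^j_c(U,\Qlbar)=0$ for $j<\dim U$ holds for arbitrary affine varieties'' is false.  Artin's theorem bounds $\HC^j(U,\Qlbar)$ from above by $\dim U$; converting this into a lower bound for $\HC^j_c$ requires Verdier duality, and for singular $U$ the dualizing complex is not a shift of the constant sheaf.  A concrete counterexample: let $U$ be two copies of $\A^2$ glued at the origin (an irreducible-component-wise smooth, but globally singular, affine surface).  Its compactification $X$ is two $\PP^2$'s meeting at a point in $\PP^4$, with hyperplane section $Y$ two disjoint lines; one computes $\HC^1_c(U,\Qlbar)\cong\Qlbar\ne 0$ although $\dim U=2$, and correspondingly $\HC^0(X)=\Qlbar\not\cong\Qlbar^2=\HC^0(Y)$.

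So the homology-manifold input from Lemma~\ref{lem:bm1} is not an optional refinement for the edge degree; it is exactly what buys you $\omega_{\Theta_{i+1}\setminus\Theta_i}\cong\Qlbar[2(i+1)](i+1)$, hence Poincar\'e duality on the affine complement, hence the compactly-supported vanishing in the full range $j<i+1$.  Once you use it as you did in your first paragraph, the argument is complete and coincides with the paper's.
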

\begin{proof}

Lemma \ref{thetaislci} and the Lefschetz hyperplane theorem give the isomorphism.  Moreover,
we know that  for $m\gg 0$, the actions of $\mathrm{Frob}_q^m$ agree.  The fact that the
generalized eigenspaces coincide follows formally.
%
%
\end{proof}

By Lemma \ref{lem:bm1}, we can apply Poincare duality to  conclude

\begin{theorem} \label{thm:thetatop} $\HC^j(\Theta_i, \Qlbar)\cong \HC^{j+2g-2i}(J,\Qlbar)(g-i)$ for all $i<j$.
\end{theorem}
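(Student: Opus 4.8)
The plan is to deduce Theorem \ref{thm:thetatop} from Corollary \ref{stability1} via Poincar\'e duality, using crucially that $\Theta_i$ is a homology manifold (Lemma \ref{lem:bm1}). First I would recall that since $\mathrm{IC}_{\Theta_i} = \Qlbar[\dim \Theta_i]$ and $\dim \Theta_i = i$ (the Abel-Jacobi map $A \colon C^{(i)} \to \Theta_i$ is generically an isomorphism for $i < g$, and more generally $C^{(i)} \to \Theta_i$ is birational onto its image), the variety $\Theta_i$ satisfies Poincar\'e duality: there is a perfect pairing
\[
\HC^j(\Theta_i \otimes \overline{\F}_q, \Qlbar) \times \HC^{2i - j}(\Theta_i \otimes \overline{\F}_q, \Qlbar) \to \Qlbar(-i).
\]
Concretely this is because $\Qlbar[i]$ being the IC sheaf forces the Verdier dual of $\Qlbar$ on $\Theta_i$ to be $\Qlbar[2i](i)$, so $\HC^j(\Theta_i, \Qlbar)^\vee \cong \HC^{2i-j}(\Theta_i, \Qlbar)(i)$ compatibly with Frobenius.

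Next I would invoke Corollary \ref{stability1}: for $0 \le k < i$ we have $\HC^k(\Theta_i \otimes \overline{\F}_q, \Qlbar) \cong \HC^k(J \otimes \overline{\F}_q, \Qlbar)$, compatibly with the generalized Frobenius eigenspaces. Now fix $j$ with $i < j \le 2i$ (the range $j > 2i$ is vacuous since $\dim \Theta_i = i$, so both sides vanish once one checks $\HC^{>2i}(\Theta_i) = 0$ and the claimed group $\HC^{j+2g-2i}(J)$ also vanishes for $j + 2g - 2i > 2g$, i.e. $j > 2i$). Set $k = 2i - j$, so $0 \le k < i$. Applying Poincar\'e duality on $\Theta_i$ and then Corollary \ref{stability1} and then Poincar\'e duality on $J$ (which is smooth projective of dimension $g$):
\[
\HC^j(\Theta_i, \Qlbar) \cong \HC^{2i-j}(\Theta_i, \Qlbar)^\vee(-i) \cong \HC^{k}(J, \Qlbar)^\vee(-i) \cong \HC^{2g-k}(J, \Qlbar)(g)(-i) = \HC^{j + 2g - 2i}(J, \Qlbar)(g-i).
\]
Here I use that $2g - k = 2g - (2i-j) = j + 2g - 2i$, and I track the Tate twists: Poincar\'e duality on $J$ gives $\HC^k(J)^\vee \cong \HC^{2g-k}(J)(g)$, and the extra $(-i)$ from the $\Theta_i$-duality combines to $(g-i)$. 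The Frobenius-eigenspace compatibility in Corollary \ref{stability1} ensures the composite isomorphism is Frobenius-equivariant as stated.

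The main obstacle — and the reason Lemma \ref{lem:bm1} is invoked — is establishing that $\Theta_i$ genuinely satisfies Poincar\'e duality, since $\Theta_i$ is singular and a naive application of duality for a non-manifold would fail. This is precisely what "$\Theta_i$ is a homology manifold" buys us, and it has already been proven, so the remaining work is just bookkeeping: checking the degree ranges match up (in particular that the boundary cases $j = i$, $j = 2i$, and $j > 2i$ behave correctly, using $\HC^{<i}(\Theta_i) \cong \HC^{<i}(J)$ to handle any overlap and the vanishing $\HC^{>2i}(\Theta_i) = 0$), and verifying that the Tate twists and Frobenius actions compose correctly. One subtlety worth a sentence: the isomorphism of Corollary \ref{stability1} is stated for $j < i$ while the conclusion here concerns $j > i$, so there is genuinely no circularity — we are feeding the low-degree cohomology of $\Theta_i$ into duality to extract its high-degree cohomology.
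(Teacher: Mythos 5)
Your proposal is correct and is essentially the paper's own argument: the paper's proof of Theorem \ref{thm:thetatop} consists precisely of invoking Lemma \ref{lem:bm1} to get Poincar\'e duality on $\Theta_i$ and combining it with Corollary \ref{stability1}. Your Tate-twist and degree bookkeeping, which the paper omits, is accurate.
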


We now study the extent to which this remains true for pairs of special divisors.  We preserve the notation
of Section \ref{subsec:pairs}, in particular the notation for the following maps:

$$
\xymatrix{
(A \times A)^{-1} \Sigma^{-1} (\Ll) \ar[ddd]_{(A\times A)_{\Ll}} \ar@{~>}[rrr] \ar[rd] & & &  |\Ll| \ar[ld] \ar[ddd] \\ 
& C^{(g-a)}\times C^{(g-b)} \ar@{~>}[r]^{\,\,\,\,\,\,\,\,\,\,\,\,\,\,\Sigma}  \ar[d]_{A \times A}
\ar@/_1pc/@{~>}[rd]^{\underline{A}} &C^{(2g-a-b)}\ar[d]^{A} &\\
& \Theta_{g-a} \times\Theta_{g-b} \ar[r]_{\Sigma} &  J_{2g-a-b} & \\
\Theta_{g-a} \cap \Ll - \Theta_{g-b} \ar[rrr] \ar[ru]
& & &  \{\Ll\}  \ar[ul]
}
$$

The intersections remain set theoretically lci.  In fact,

\begin{lemma} \label{thetathetaislci}
Fix $L$ with $e(L)\leq g-a-b$, and view $\Sigma^{-1}(L)$  as a subvariety of $J$ by projection to the first co-ordinate.
For $m\gg0$, we have that $\Sigma^{-1}(L)$
 is set theoretically the intersection of $a+b$ translates of $\Theta_{g-1}$ defined over $\F_{q^m}$.
\end{lemma}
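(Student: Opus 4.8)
I would prove Lemma \ref{thetathetaislci} by exhibiting $\Sigma^{-1}(L)$, viewed in $J_{g-a}$ via the first coordinate, as the set-theoretic intersection of $a+b$ ample divisors each of which is a translate of $\Theta_{g-1}$, following the same strategy as Lemma \ref{thetaislci}. Concretely, $D_1 \in \Sigma^{-1}(L)$ (after the identification $C^{(g-a)} \to \Theta_{g-a}$, $C^{(g-b)}\to \Theta_{g-b}$) precisely when $\mathcal{O}(D_1) \in \Theta_{g-a}$ and $L\otimes\mathcal{O}(-D_1)\in\Theta_{g-b}$, i.e. when $\mathcal{O}(D_1)$ lies in $\Theta_{g-a}\cap(L-\Theta_{g-b})$. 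So I want to write this intersection set-theoretically as an intersection of translates of $\Theta_{g-1}$. The idea is to iterate the identity \eqref{ThetaLCI} from the proof of Lemma \ref{thetaislci}: choosing $m\gg 0$ so that there are enough non-Weierstrass $\F_{q^m}$-points $Q$, and applying \eqref{ThetaLCI} repeatedly, one shows $\Theta_{g-a}$ (up to a fixed translate) is set-theoretically the intersection of $a$ translates of $\Theta_{g-1}$, and likewise $\Theta_{g-b}$ is an intersection of $b$ translates of $\Theta_{g-1}$. Translating the latter collection by $L$ and combining gives $a+b$ translates of $\Theta_{g-1}$ whose common zero locus contains $\Sigma^{-1}(L)$.

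**Key steps, in order.** First, I would set up the iterated version of \eqref{ThetaLCI}: for $i < g-1$ and a non-Weierstrass point $Q$, \eqref{ThetaLCI} shows $\Theta_i$ (shifted by $\kappa + (g-i-2)\tau(Q)$) is the scheme-of-points intersection of $\Theta_{g-1}+Q$ with $\Theta_{i+1}$ shifted appropriately. Downward induction on $i$ from $g-1$ to $g-a$, using a fresh non-Weierstrass point at each stage (available over $\F_{q^m}$ for $m\gg 0$ since $C$ has $\sim q^m$ points and only $2g+2$ Weierstrass points), expresses $\Theta_{g-a}$, up to an explicit translation, as a set-theoretic intersection of $a$ translates of $\Theta_{g-1}$, all defined over $\F_{q^m}$. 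Second, the same argument gives $\Theta_{g-b}$, up to translation, as a set-theoretic intersection of $b$ translates of $\Theta_{g-1}$. Third, I would translate the second family by $L$: since $L-\Theta_{g-1}$ is again a translate of $\Theta_{g-1}$ (as $\Theta_{g-1}=-\Theta_{g-1}$ up to translation by a theta characteristic, or more simply because any translate of an ample divisor is ample), the locus $L-\Theta_{g-b}$ is, up to translation, an intersection of $b$ translates of $\Theta_{g-1}$. Fourth, I would combine: $\Theta_{g-a}\cap(L-\Theta_{g-b})$ is set-theoretically the intersection of the resulting $a+b$ translates of $\Theta_{g-1}$, and by Corollary \ref{cor:dimestimate} the hypothesis $e(L)\le g-a-b$ guarantees this intersection has the expected dimension $g-a-b$, i.e. codimension $a+b$, so it is set-theoretically lci — the $a+b$ divisors cut it out with the right codimension. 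Finally, I would transport this back through the isomorphism $C^{(g-a)}\cong\Theta_{g-a}$ to identify $\Sigma^{-1}(L)$ (projected to $J$) with this intersection.

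**The main obstacle.** The genuinely delicate point is bookkeeping the translations so that at each stage the shifted $\Theta_{i+1}$ appearing on the right of \eqref{ThetaLCI} is precisely the one produced by the previous inductive step, and verifying that the accumulated shift is by a divisor defined over $\F_{q^m}$ — this forces the choice of the $Q$'s to be $\F_{q^m}$-rational and is the reason for the hypothesis $m\gg 0$. One must also check that the combined intersection really does have dimension $g-a-b$ rather than something larger: a priori intersecting $a+b$ ample divisors could fail to drop dimension by the full amount, but this is exactly what Corollary \ref{cor:dimestimate} provides under the assumption $e(L)\le g-a-b$, so I would cite it rather than reprove it. The remaining verifications — that each application of \eqref{ThetaLCI} is an equality of \emph{sets} (which is what the cited proof of Lemma \ref{thetaislci} already establishes via Lemma \ref{hyperpair}), and that "non-Weierstrass point over $\F_{q^m}$" exists for $m\gg 0$ — are routine.
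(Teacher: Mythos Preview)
Your proposal is correct and follows essentially the same approach as the paper: identify $\Sigma^{-1}(L)$ with $\Theta_{g-a}\cap(L-\Theta_{g-b})$, invoke Lemma~\ref{thetaislci} to write each of $\Theta_{g-a}$ and $\Theta_{g-b}$ as a set-theoretic intersection of $a$ (resp.\ $b$) translates of $\Theta_{g-1}$, and use Corollary~\ref{cor:dimestimate} for the dimension check. The paper's proof is just the terse version of what you wrote; the only superfluous remark in your outline is the final ``transport back through the isomorphism $C^{(g-a)}\cong\Theta_{g-a}$,'' which is unnecessary (and not literally an isomorphism) since you are already working inside $J$ via the first projection.
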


\begin{proof}
By Lemma \ref{thetaislci}, $\Theta_{g-a}$ and $\Theta_{g-b}$ are set theoretically the intersection of $a$ and $b$ translates of $\Theta_{g-1}$ respectively. The lemma then follows from the dimension estimate in Corollary \ref{cor:dimestimate} and the fact that $\Sigma^{-1}(L)=\Theta_{g-a}\cap L-\Theta_{g-b}$.
\end{proof}

\begin{corollary} \label{thetathetalef}
Assume $e(L)\leq g-a-b$.
We have
$$\HC^j(\Theta_{g-a} \cap \Ll - \Theta_{g-b} \otimes \overline{F}_q,\Qlbar)\cong \HC^j(J \otimes \overline{F}_q,\Qlbar) \,\,\,\,\,\,\,\,\,\,\,\,\,\, \mbox{for}\,\,\,\,\, 0\leq j<g - a - b$$
The isomorphism preserves the generalized eigenspaces of Frobenius.
\end{corollary}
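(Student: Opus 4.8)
The plan is to deduce Corollary~\ref{thetathetalef} from Lemma~\ref{thetathetaislci} in exactly the way Corollary~\ref{stability1} was deduced from Lemma~\ref{thetaislci}. Namely, by Lemma~\ref{thetathetaislci}, once $e(L) \le g-a-b$, the variety $\Sigma^{-1}(L) = \Theta_{g-a} \cap L - \Theta_{g-b}$ is, for $m \gg 0$, set-theoretically the transverse-dimensional intersection inside $J$ of $a+b$ translates of the ample divisor $\Theta_{g-1}$, all defined over $\F_{q^m}$. Since $\Theta_{g-1}$ is ample, each such translate is ample, so $\Sigma^{-1}(L)$ is set-theoretically cut out by a section of an ample line bundle on $J$ that is itself an $(a+b)$-fold "intersection'' — more precisely one can take the successive intersections $J \supset H_1 \supset H_1 \cap H_2 \supset \cdots$, and by Corollary~\ref{cor:dimestimate} each stage drops dimension by exactly one, so the Lefschetz hyperplane theorem for singular varieties (as used already for Corollary~\ref{stability1}) applies at each step. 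Iterating the Lefschetz hyperplane theorem $a+b$ times gives the restriction isomorphism $\HC^j(J \otimes \overline{\F}_q, \Qlbar) \cong \HC^j(\Sigma^{-1}(L) \otimes \overline{\F}_q, \Qlbar)$ for $j < \dim \Sigma^{-1}(L) = g-a-b$, which is exactly the claimed range.

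Concretely, I would first observe that the relevant ample divisors and the variety $\Sigma^{-1}(L)$ all descend to $\F_{q^m}$ for suitable $m$; apply the Lefschetz hyperplane theorem for the first translate $H_1$ of $\Theta_{g-1}$ to get $\HC^j(J) \cong \HC^j(H_1)$ for $j < g-1$; then intersect with the second translate, noting by the dimension count in Corollary~\ref{cor:dimestimate} that $H_1 \cap H_2$ has codimension $2$ in $J$ and that $H_2|_{H_1}$ is ample, so Lefschetz gives $\HC^j(H_1) \cong \HC^j(H_1 \cap H_2)$ for $j < g-2$; and continue, so that after $a+b$ steps one has $\HC^j(J) \cong \HC^j(\bigcap_{k=1}^{a+b} H_k)$ for $j < g - a - b$. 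Since $\bigcap H_k$ has the same underlying reduced scheme as $\Sigma^{-1}(L)$ and étale cohomology is insensitive to nilpotents, this is the desired isomorphism.

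The statement that the isomorphism preserves the generalized eigenspaces of Frobenius follows exactly as in the proof of Corollary~\ref{stability1}: the isomorphism is realized by restriction maps which, after base change to $\F_{q^m}$, commute with $\mathrm{Frob}_q^m$; since a $\mathrm{Frob}_q^m$-equivariant isomorphism of the cohomology groups is in particular compatible with the semisimplification, and since the eigenvalues of $\mathrm{Frob}_q$ are determined up to $m$-th roots of unity by those of $\mathrm{Frob}_q^m$ together with the Galois action, the generalized eigenspace decompositions for $\mathrm{Frob}_q$ must match. This "follows formally'' argument is identical to the one already invoked for Corollary~\ref{stability1}, so I would simply cite that.

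The main obstacle is making sure the Lefschetz hyperplane theorem is being applied legitimately at each of the intermediate stages — that is, that at step $k$ the intersection $H_1 \cap \cdots \cap H_{k-1}$ is a (possibly singular) projective variety on which the restriction of $H_k$ is ample and the intersection with $H_k$ genuinely drops the dimension by one rather than acquiring excess-dimensional or embedded components. The first two points are automatic from ampleness of $\Theta_{g-1}$ on the abelian variety $J$; the dimension-drop is precisely what Corollary~\ref{cor:dimestimate} guarantees under the hypothesis $e(L) \le g-a-b$, which is why that hypothesis appears. One should also confirm that the singular-variety version of Lefschetz being used (for arbitrary closed subvarieties of projective space and a hyperplane section) is the same statement already used in Corollary~\ref{stability1}, so no new input is required beyond what the paper has already committed to.
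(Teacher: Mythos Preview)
Your proposal is correct and follows exactly the same approach as the paper: the paper's proof is simply ``The isomorphism comes from Lemma~\ref{thetathetaislci} and the Lefschetz hyperplane theorem. The equality of Frobenius eigenvalues follows as in Corollary~\ref{stability1}.'' Your write-up is a faithful unpacking of this, and your final paragraph correctly identifies and dispatches the one nontrivial verification (that each successive intersection drops dimension by exactly one, which is forced once the final intersection has the expected dimension $g-a-b$ from Corollary~\ref{cor:dimestimate}).
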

\begin{proof}
The isomorphism comes from Lemma \ref{thetathetaislci} and the Lefschetz hyperplane theorem. The equality of Frobenius eigenvalues follows as in Corollary \ref{stability1}.
\end{proof}

For a line bundle $L$, recall we write

\[e(L)  := \max \{m \, | \, \HC^0(C, \kappa^{-m} L) \ne 0 \}\]

\begin{lemma}\label{Esemismall}
Fix $L$.  Outside a set of dimension  $< e(L)$ on $\Sigma^{-1}(\Ll) = \Theta_{g-a} \cap \Ll - \Theta_{g-b}$, the map
$(A \times A)_{\Ll}: (A \times A)^{-1} \Sigma^{-1} (\Ll) \to \Sigma^{-1}(\Ll)$ is semismall and the relevant loci are
$\Theta_{g-a-2l} \cap L -  \Theta_{g-b-2r}$.
\end{lemma}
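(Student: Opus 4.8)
The plan is to analyze the fibers of $(A\times A)_\Ll$ over a point of $\Sigma^{-1}(\Ll)$ using the description of $\Sigma^{-1}(\Ll)$ as $\Theta_{g-a}\cap \Ll - \Theta_{g-b}$. A point of $\Sigma^{-1}(\Ll)$ is a line bundle $M$ with $M\in\Theta_{g-a}$ and $\Ll M^{-1}\in\Theta_{g-b}$; the fiber of $A\times A$ over it is $|M|\times|\Ll M^{-1}|$, a product of projective spaces $\PP^l\times\PP^r$ where (using Lemma \ref{hyperpair}) the numbers $l,r$ are governed by how far $M$, respectively $\Ll M^{-1}$, sits inside the $\Theta$-filtration — concretely, the locus where the fiber is $\PP^l\times\PP^r$ is $\Theta_{g-a-2l}\cap \Ll-\Theta_{g-b-2r}$, exactly the ``relevant loci'' named in the statement. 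So the combinatorial bookkeeping is the same as in Lemma \ref{lem:bm1}, now done in two variables simultaneously.

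First I would set up the stratification: stratify $\Sigma^{-1}(\Ll)$ by the pairs $(l,r)$, so the stratum $\Sigma^{-1}(\Ll)_{l,r}$ is (the locally closed part of) $\Theta_{g-a-2l}\cap\Ll-\Theta_{g-b-2r}$ on which $h^0(M)=l+1$ and $h^0(\Ll M^{-1})=r+1$ exactly. The fiber dimension over this stratum is $l+r$. Semismallness over a Zariski-open $U\subseteq\Sigma^{-1}(\Ll)$ then means: for every stratum meeting $U$, $\mathrm{codim}(\text{stratum in }\Sigma^{-1}(\Ll)) \ge 2(l+r)$. The second step is the dimension estimate for $\Theta_{g-a-2l}\cap\Ll-\Theta_{g-b-2r}$: by Corollary \ref{cor:dimestimate} applied with $a$ replaced by $a+2l$ and $b$ by $b+2r$, this intersection has the expected dimension $g-(a+2l)-(b+2r)$ as long as $e(L)\le g-(a+2l)-(b+2r)$, i.e.\ as long as $l+r \le (g-a-b-e(L))/2$; outside that range the intersection (and hence the bad stratum) still has dimension at most $\max(e(L),\,g-a-b-2l-2r)$, which is $<e(L)$ once $2(l+r) > g-a-b-e(L)$ — wait, more carefully: when $e(L)\le g-a-b-2l-2r$ the stratum has dimension $g-a-b-2l-2r$, giving codimension (in $\Sigma^{-1}(\Ll)$, which has dimension $g-a-b$ by Corollary \ref{cor:dimestimate}) equal to $2l+2r$, precisely the semismall bound. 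The only strata that can fail the codimension bound are those with $g-a-b-2l-2r < e(L)$, and for these the dimension estimate of Corollary \ref{cor:dimestimate} pins them inside a locus of dimension $\le e(L)$; but we must do slightly better and show this exceptional set has dimension strictly $< e(L)$, which I would extract by noting that equality $\dim = e(L)$ in Corollary \ref{cor:dimestimate} forces $l=r=0$ (the generic stratum, where the map is an iso and there is nothing to exclude), so every stratum with $(l,r)\neq(0,0)$ lying in the bad range has dimension $<e(L)$; taking the union of finitely many such strata (finiteness from Corollary \ref{cor:dimestimate}) keeps dimension $<e(L)$.

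The third step is to remove that exceptional set: let $Z\subset\Sigma^{-1}(\Ll)$ be the union of the finitely many bad strata, so $\dim Z < e(L)$, and set $U=\Sigma^{-1}(\Ll)\setminus \overline{Z}$ — this is the required open set (still a dense open of each component of the expected dimension provided $g-a-b \ge e(L)$, which is the only interesting case). Over $U$ every stratum satisfies the codimension-$\ge 2(l+r)$ bound, so $(A\times A)_\Ll$ restricted over $U$ is semismall, and by construction the loci over which the fiber jumps are exactly the $\Theta_{g-a-2l}\cap L-\Theta_{g-b-2r}$ as claimed. I expect the main obstacle to be the careful bookkeeping in the second step: making sure that the dimension count from Corollary \ref{cor:dimestimate} is applied with the correct shifted indices, and checking that the boundary case $e(L) = g-a-b-2l-2r$ lands on (rather than violates) the semismallness inequality, so that the only genuinely excluded strata are those with $e(L)$ strictly larger, where Corollary \ref{cor:dimestimate} already gives dimension $<e(L)$. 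Everything else — the identification of the fiber with $\PP^l\times\PP^r$ and of the jump loci with the stated intersections — is immediate from Lemma \ref{hyperpair} and the definitions.
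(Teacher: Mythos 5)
Your proposal follows the paper's proof essentially step for step: stratify $\Sigma^{-1}(\Ll)$ by the loci $\Theta_{g-a-2l}\cap \Ll-\Theta_{g-b-2r}$ over which the fibre is $\PP^l\times\PP^r$, invoke Corollary \ref{cor:dimestimate} with shifted indices to get the expected dimension (hence exactly the semismall codimension bound) on the good strata, and discard the remaining strata after checking they have dimension $<e(L)$. The one slip is that the shifted problem involves $L\otimes\kappa^{-(l+r)}$, whose $e$-invariant is $e(L)-l-r$, so the sharp condition for expected dimension is $e(L)\le g-a-b-l-r$ rather than your $e(L)\le g-a-b-2l-2r$; your more restrictive threshold still yields the lemma, since the extra strata you excise have expected dimension $g-a-b-2(l+r)<e(L)$, but it is worth noting that the strict inequality on the genuinely bad strata comes precisely from this drop of $e$ by $l+r>0$.
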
 \begin{proof} Define $\Theta^0_i = \Theta_i\backslash \Theta_{\le i}$. This gives a stratification of $\Theta$.
The fibre of $(A \times A)_{\Ll}$ over any point of $\Theta^0_{g-a-2l}\times
\Theta^0_{g-b-2r} \cap \Sigma^{-1}(L)$
is $\p^r\times\p^l$.

By Corollary \ref{cor:dimestimate},  as long as
$$e(L - (r+s) \kappa) = e(L) - l - r \le g-a-2l - b - 2r \iff e(L) \le g - a - b - l - r$$
the space $\Theta_{g-a-2l}\times \Theta_{g-b-2r} \cap s^{-1}(L)$ will be of the expected dimension $g-a-b-2(r+l)$.
In particular, in this case the condition for semismallness is not violated on $\Theta^0_{g-a-2l}\times
\Theta^0_{g-b-2r}$.

On the other hand, if the inequality fails to hold, then
\begin{eqnarray*}
e(L) & > & g - a - b - l - r \\ & \ge & \frac{(g-2a-2l) + (g-2b-2r)}{2} \\ & \ge & \min(\dim \Theta_{g-2a-2l}, \dim \Theta_{g-2b-2r}) \\
& \ge & \dim \Theta_{g-2a-2l} \cap L -  \Theta_{g-2b-2r}
\end{eqnarray*}
\end{proof}

\begin{proposition} \label{lem:bm2}
Assume $e(L) \le g-a-b$.  In the abelian category of perverse sheaves on $\Theta_{g-a} \cap L - \Theta_{g-b}$,
there is a morphism $\Qlbar[g-a-b] \to \mathrm{IC}_{\Theta_{g-a} \cap L - \Theta_{g-b}}$
and its kernel is supported
in dimension $\le e(L)$.
\end{proposition}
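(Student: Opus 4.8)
The plan is to produce the morphism $\Qlbar[g-a-b] \to \mathrm{IC}_{\Theta_{g-a}\cap L - \Theta_{g-b}}$ from the decomposition of $(A\times A)_{L,*}\Qlbar$ along the lines of the Borho--MacPherson argument recalled in the proof of Lemma~\ref{lem:bm1}, but now relativized over the stratification supplied by Lemma~\ref{Esemismall}. First I would set $X := \Theta_{g-a}\cap L - \Theta_{g-b}$, $\widetilde X := (A\times A)^{-1}\Sigma^{-1}(L)$, and $f := (A\times A)_L$, and let $U\subset X$ be the open set (with complement of dimension $< e(L)$, by Lemma~\ref{Esemismall}) over which $f$ is semismall with strata the $\Theta^0_{g-a-2l}\times\Theta^0_{g-b-2r}\cap\Sigma^{-1}(L)$ and fibres $\p^l\times\p^r$. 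Over $U$, semismallness implies $f_*\Qlbar[g-a-b]|_U$ is perverse, so it decomposes by \cite{BBD} into a direct sum of IC sheaves of the closed strata $\overline{\Theta^0_{g-a-2l}\times\Theta^0_{g-b-2r}}\cap\Sigma^{-1}(L)$ (these being exactly $\Theta_{g-a-2l}\cap L-\Theta_{g-b-2r}$), with multiplicities governed by the local systems obtained from the cohomology of the fibres $\p^l\times\p^r$.

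The key point is that the top stratum $l=r=0$ contributes exactly $\mathrm{IC}_{X}|_U$ with multiplicity one: the generic fibre is a single reduced point, so $f_*\Qlbar[g-a-b]$ restricted to the open dense stratum is just $\Qlbar[g-a-b]$, which is $\mathrm{IC}_X$ there since $X$ is irreducible of dimension $g-a-b$ (Corollary~\ref{cor:dimestimate}, using $e(L)\le g-a-b$). Hence on $U$ we get a canonical split inclusion $\mathrm{IC}_X|_U \hookrightarrow f_*\Qlbar[g-a-b]|_U$ and, dually, a split surjection $f_*\Qlbar[g-a-b]|_U \twoheadrightarrow \mathrm{IC}_X|_U$. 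Adjunction for the map $\Qlbar[g-a-b]\to f_*\Qlbar[g-a-b]$ on $\widetilde X$ (recall $\widetilde X$ is smooth, being a product of symmetric products intersected with the linear system $|L|$ appropriately — actually it suffices that $\Qlbar[g-a-b]$ on $\widetilde X$ maps to $f^*f_*\Qlbar[g-a-b]$) then yields, after composing with the projection to the top summand, a morphism of perverse sheaves $\Qlbar[g-a-b]|_U\to \mathrm{IC}_X|_U$. One checks this is the natural ``restriction to the smooth locus and corestrict'' map, which is an isomorphism over the smooth locus of $X$; in particular it is nonzero.

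It remains to extend this from $U$ to all of $X$ and to control the kernel. For the extension I would use that $j_{!*}$ from the open stratum of $X$ (where $\Qlbar[g-a-b]$ is already $\mathrm{IC}$) produces $\mathrm{IC}_X$ globally, while $\Qlbar[g-a-b]$ on all of $X$ maps to $j_*j^*\Qlbar[g-a-b]$; composing with the adjunction $\Qlbar[g-a-b]\to j_*\mathrm{IC}_X|_U$ and using that $\mathrm{IC}_X$ is the image of $j_!\mathrm{IC}_X|_U \to j_*\mathrm{IC}_X|_U$ (computed within perverse sheaves, which works since $X\setminus U$ has dimension $< e(L)\le g-a-b$, so $j_{!*}$ behaves well), I obtain the desired map $\Qlbar[g-a-b]\to\mathrm{IC}_X$ over $X$. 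Its kernel $K$ is a perverse subsheaf of $\Qlbar[g-a-b]$ that vanishes on $U$, hence is supported on $X\setminus U$, which has dimension $\le e(L)$. I expect the main obstacle to be the bookkeeping at the boundary: verifying that the semismall decomposition over $U$ really is compatible with the two competing ways of producing $\mathrm{IC}_X$ (via $f_*$ and via $j_{!*}$ from the generic point), i.e. that the ``multiplicity-one'' summand is canonically $\mathrm{IC}_X|_U$ and not merely abstractly isomorphic to it, and that this identification glues across $U$ rather than only holding stratum-by-stratum; this is exactly where one must use that the fibre cohomology of $\p^l\times\p^r$ has canonical generators, as in the proof of Lemma~\ref{lem:bm1}.
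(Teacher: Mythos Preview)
Your proposal has the right cast of characters but misses the one step that actually proves the kernel bound, and it overcomplicates the existence of the map.

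\textbf{Existence of the map.}  You work hard to manufacture $\Qlbar[g-a-b]\to \mathrm{IC}_X$ via the pushforward $f_*$ and an extension from $U$ to $X$.  In the paper this is a one-liner: by Lemma~\ref{thetathetaislci} the variety $X=\Theta_{g-a}\cap L-\Theta_{g-b}$ is set-theoretically a local complete intersection of the expected dimension $g-a-b$ (using $e(L)\le g-a-b$), so $\Qlbar[g-a-b]$ is already perverse on all of $X$.  The map to $\mathrm{IC}_X$ then exists by generalities on perverse sheaves; no extension procedure from an open set is needed.  Your route through $j_{!*}$ and the adjunctions $j_!\to j_*$ is not wrong in spirit, but it is unnecessary and the details you sketch are incomplete.

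\textbf{The kernel bound: the actual gap.}  You assert that the kernel $K$ ``vanishes on $U$, hence is supported on $X\setminus U$''.  But nothing in your argument establishes that the map $\Qlbar[g-a-b]|_U\to\mathrm{IC}_X|_U$ is injective.  You only identify $\mathrm{IC}_X|_U$ as the summand of $f_*\Qlbar[g-a-b]|_U$ coming from the open stratum $l=r=0$, and then build a map by composing the adjunction unit $\Qlbar[g-a-b]\to f_*\Qlbar[g-a-b]$ with projection to that summand.  This composite is an isomorphism on the \emph{smooth} locus of $X$, but you give no reason it should be injective on all of $U$, which is what you need.

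The paper closes this gap by running the Borho--MacPherson count over \emph{every} stratum, not just the top one.  The stratum $\Theta^0_{g-a-2l}\cap L-\Theta^0_{g-b-2r}$ has codimension $2(l+r)$ and fibre $\p^l\times\p^r$, so the local system of fibre cohomologies is trivial of rank $(l+1)(r+1)$.  On the other hand, exactly $(l+1)(r+1)$ strata (those indexed by $l'\le l$, $r'\le r$) contain this one in their closure.  The same induction as in Lemma~\ref{lem:bm1} then forces the contribution of each stratum closure to be its constant sheaf, and in particular $\mathrm{IC}_X|_U=\Qlbar[g-a-b]|_U$.  That equality is what makes the map an isomorphism over $U$ and pushes the kernel into $X\setminus U$, whose dimension is $\le e(L)$.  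You allude to the canonical generators of $\HC^*(\p^l\times\p^r)$ at the very end, but you never actually use them to carry out this count; that is precisely the missing step.
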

\begin{proof}
The assumption guarantees that $\Theta_{g-a} \cap L - \Theta_{g-b}$ has the expected dimension $g-a-b$.
Lemma \ref{thetathetaislci} asserts this locus is set theoretically l.c.i. variety, hence the constant sheaf is perverse, from which the existence of the map follows.

Lemma \ref{Esemismall} asserts that outside of dimension $< e(L)$ the map $A \times A: (A \times A)^{-1} s^{-1} (L) \to s^{-1}(L)$
is semismall; Corollary \ref{cor:dimestimate} asserts that outside of dimension $\le e(L)$ the source is nonsingular.  In other words,
restricting to the complement of a locus of dimension $\le e(L)$, we have a semismall
resolution of singularities for which the ``relevant'' loci are precisely the $\Theta_{g-a-2l} \times \Theta_{g-b-2r} \cap \phi^{-1}(L)$.
Over the open locus of each of these, the fibre of
is $\p^l\times\p^r$; note moreover that even if $r = l$ the factors are distinguished from each other.
Thus the local system of cohomologies is trivial of dimension $(l+1)(r+1)$.
Observe moreover that $\Theta_{g-a-2l} \times \Theta_{g-b-2r} \cap s^{-1}(L)$ lies in the closure of $\Theta_{g-a-2l'} \times \Theta_{g-b-2r'}
\cap s^{-1}(L)$ for all
$l' \le l$ and $r' \le r$.  As there are $(l+1)(r+1)$ such loci, the Borho-Macpherson trick
(as in the proof of Lemma \ref{lem:bm1}) allows us to conclude that the contribution of each such $\Theta_{g-a-2l'} \times \Theta_{g-b-2r'} \cap s^{-1}(L)$ is the
IC sheaf, which is moreover equal to the constant sheaf. \end{proof}

\begin{theorem} \label{cohomology} Fix $L \in J^{2g-a-b}$.  Assuming $e(L) \le g - a - b$,
$$\HC^i(\Theta_{g-a} \cap L-\Theta_{g-b}, \Qlbar) \cong \HC^{i+2a + 2b}(J,\Qlbar)(a+b) \,\,\,\,\,\,\,\,\, \mathrm{for}\,\,\,\,  i > g - a - b + e(L)$$
\end{theorem}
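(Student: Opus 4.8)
The plan is to combine the semismall-resolution analysis of Proposition \ref{lem:bm2} with Poincar\'e duality on the ambient Jacobian, exactly in the spirit of the passage from Lemma \ref{lem:bm1} to Theorem \ref{thm:thetatop}. Write $X := \Theta_{g-a} \cap L - \Theta_{g-b}$, which by the hypothesis $e(L) \le g-a-b$ and Corollary \ref{cor:dimestimate} is a set-theoretic l.c.i.\ of the expected dimension $d := g-a-b$. First I would record that, since $X$ is set-theoretically the transverse-in-the-stable-range intersection of $a+b$ translates of the ample divisor $\Theta_{g-1}$ inside $J$ (Lemma \ref{thetathetaislci}), the \emph{dual} Lefschetz theorem (equivalently, Artin vanishing plus the affine structure of the complement) controls the cohomology of $X$ with compact supports in low degree, while Corollary \ref{thetathetalef} already handles ordinary cohomology in degrees $< d$. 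The real content is degrees above $d$, and there one must pay attention to the singularities.

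The key mechanism is Proposition \ref{lem:bm2}: there is a map $\Qlbar[d] \to \mathrm{IC}_X$ in $\mathrm{Perv}(X)$ whose kernel (and, dualizing, whose cokernel) is supported in dimension $\le e(L)$. I would argue as follows. Taking the long exact sequence of perverse cohomology sheaves associated to $0 \to K \to \Qlbar[d] \to \mathrm{IC}_X \to Q \to 0$, and using that a perverse sheaf supported in dimension $\le e(L)$ has (hyper)cohomology concentrated in the range $[-e(L), e(L)]$, one gets that the cone of $\Qlbar[d] \to \mathrm{IC}_X$ has hypercohomology vanishing outside degrees $[-e(L)-1, e(L)+1]$ (being generous with the shift). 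Hence the induced maps on hypercohomology
\[
\HC^{j-d}(X, \Qlbar) = \mathbb{H}^{j-d}(X, \Qlbar[d]) \longrightarrow \mathbb{H}^{j-d}(X, \mathrm{IC}_X)
\]
are isomorphisms for $j - d > e(L)$, i.e.\ for $j > d + e(L) = g-a-b+e(L)$. (Here I am reindexing so that $j$ is the ``classical'' cohomological degree.) This is precisely the range claimed in the theorem.

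It remains to identify $\mathbb{H}^{\bullet}(X, \mathrm{IC}_X)$ with a shift/twist of $\HC^\bullet(J)$ in the relevant degrees. For this I would invoke the semismall resolution produced in Proposition \ref{lem:bm2}: away from a locus of dimension $\le e(L)$, the map $A\times A$ is a semismall resolution of $X$ whose decomposition theorem summands are all IC sheaves of the strata $\Theta_{g-a-2l}\times\Theta_{g-b-2r}\cap \Sigma^{-1}(L)$, each equal to the constant sheaf there. The top summand is $\mathrm{IC}_X$ itself; the lower ones are supported in strictly smaller dimension. Since $X$ is a set-theoretic complete intersection of $a+b$ ample divisors in $J$, its IC-cohomology satisfies Poincar\'e--Lefschetz duality: $\mathbb{H}^{k}(X,\mathrm{IC}_X) \cong \mathbb{H}^{-k}(X,\mathrm{IC}_X)^\vee(-d)$, and the hard Lefschetz/primitive decomposition relative to the ample class, together with Corollary \ref{thetathetalef} in low degrees, pins down $\mathbb{H}^{d+s}(X,\mathrm{IC}_X) \cong \HC^{g+s + (a+b)}(J,\Qlbar)(\cdot)$ in the stable range; unwinding the normalization $\dim J = g$ and $\dim X = d = g-a-b$ gives exactly $\HC^{i+2a+2b}(J,\Qlbar)(a+b)$ with $i = j - 0$ as stated. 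I would also double-check, using Corollary \ref{stability1}/\ref{thetathetalef} applied to each $\Theta$ appearing, that all these identifications respect Frobenius weights, so that no spurious correction terms appear.

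The main obstacle is the bookkeeping in the second step: one must verify that the \emph{perverse-sheaf} cone of $\Qlbar[d]\to\mathrm{IC}_X$ — not merely its kernel — has its hypercohomology confined to the narrow band around degree $0$, i.e.\ that both the kernel $K$ \emph{and} the cokernel $Q$ of Proposition \ref{lem:bm2} are supported in dimension $\le e(L)$. Proposition \ref{lem:bm2} as stated only bounds the kernel; I would obtain the cokernel bound either by applying Verdier duality (using that $\Qlbar[d]$ is self-dual up to twist because $X$ is a set-theoretic l.c.i., and $\mathrm{IC}_X$ is self-dual) to turn the kernel statement into a cokernel statement, or by a direct strata-by-strata comparison of stalks as in the proof of Lemma \ref{lem:bm1}. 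Once that symmetry is in hand, the degree range $i > g-a-b+e(L)$ drops out of the spectral sequence / exact triangle essentially formally, and the comparison with $\HC^\bullet(J)$ follows from the already-established Lefschetz isomorphism together with Poincar\'e duality on the homology-manifold-away-from-small-locus $X$.
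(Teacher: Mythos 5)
Your proposal follows essentially the same route as the paper's proof: the exact sequence of perverse sheaves coming from Proposition \ref{lem:bm2}, the observation that a perverse sheaf supported in dimension $\le e(L)$ has hypercohomology confined to degrees $[-e(L),e(L)]$, the resulting identification of ordinary with intersection cohomology outside that band, the low-degree comparison with $\HC^*(J)$ from Corollary \ref{thetathetalef}, and Poincar\'e duality for $\mathrm{IH}$ to transport it to high degrees. One correction: the ``main obstacle'' you isolate is not one. Since $\Qlbar[g-a-b]$ is perverse and the singular locus is a proper closed subset, $\Qlbar[g-a-b]$ admits no nonzero perverse quotient supported there (for such a quotient $Q$ one has $\mathrm{Hom}(\Qlbar[g-a-b],Q)=\mathbb{H}^{-(g-a-b)}(Q)=0$ by the perverse support conditions), so the natural map to $\mathrm{IC}$ --- the image of $\Qlbar[g-a-b]\to{}^p j_*\Qlbar_U[g-a-b]$ --- is automatically surjective and your cokernel $Q$ vanishes; this is what the paper uses implicitly in writing a short exact sequence. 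Be warned that your first proposed remedy rests on a false premise: a set-theoretic l.c.i.\ has perverse, but in general not self-dual, shifted constant sheaf --- self-duality is precisely the homology-manifold property of Lemma \ref{lem:bm1}, which holds for each $\Theta_i$ individually but is exactly what fails for the intersection. Neither point affects the validity of your argument; also, the decomposition theorem and relative hard Lefschetz invoked in your second step are not needed, since Poincar\'e duality for $\mathrm{IH}$ together with Corollary \ref{thetathetalef} already pins down the high-degree groups.
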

\begin{proof} By Lemma \ref{lem:bm2}, we have an exact sequence of perverse sheaves on
$\Theta_{g-a} \cap L - \Theta_{g-b} $:
\[0 \to F \to \Qlbar[g-a-b] \to \mathrm{IC}_{
\Theta_{g-a} \cap L - \Theta_{g-b}} \to 0\]
where
$F$ is supported in dimension $\le e(L)$.  As $F$ is perverse, we have by definition
that $\Hh^{-i}(F)$ is a sheaf supported on a variety of dimension $i$, and in particular that
$\HC^k(\Hh^{-i}(F)) = 0$ for $k \notin [0, 2i]$.  Thus from the hypercohomology spectral sequence
$$\HC^j(\Theta_{g-a} \cap L - \Theta_{g-b}, \Hh^{-i}(F)) \Rightarrow
\HC^{j-i}(\Theta_{g-a} \cap L - \Theta_{g-b}, F)$$ we see that
$\mathbb{H}^*(\Theta_{g-a} \cap L - \Theta_{g-b}, F)$
is supported in degrees between $\pm e(L)$.  From the long exact
sequence of hypercohomology we deduce
$$\HC^{i-(g-a-b)}(\Theta_{g-a} \cap L - \Theta_{g-b},\Qlbar)
\cong \mathrm{IH}^i(\Theta_{g-a} \cap L- \Theta_{g-b})\,\,\,\,\,\,\,\,\, \mathrm{for}\,\,\,\, i \notin [-e-1, e]$$
Corollary \ref{thetathetalef} identifies $\HC^{< g-a-b}(\Theta_{g-a} \cap L - \Theta_{g-b}, \Qlbar)$
with the corresponding cohomology groups of $J$.
Poincar\'e duality of the intersection cohomology gives the desired result.
\end{proof}

\subsection{Towards the exponential bound}

Consider the map $\Sigma: \Theta_{g-a} \times \Theta_{g-b} \to J^{2g-a-b}$.  In this section we restrict ourselves
to the locus
$$\widetilde{J}^{2g-a-b} := \{L \in J^{2g-a-b}\, | \, e(L) \le g - a - b \}$$

\begin{conj} \label{conj:expboundinter}
There exists a universal constant $c$ (independent of $g$, the field, the curve, $a$, $b$)  such that for all $y \in \tilde{J}$, we have
$\dim H^*(s^{-1}(y)) \le c^g$.
\end{conj}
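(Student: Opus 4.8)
The plan is to reduce the estimate to a count of Betti numbers of a single fibre $s^{-1}(y) = \Theta_{g-a} \cap y - \Theta_{g-b}$ inside $J$, and then to exploit that this fibre, while singular, is controlled by the semismall resolution $\underline{A}\colon C^{(g-a)} \times C^{(g-b)} \to J$ together with the structure already extracted in Section~\ref{subsec:pairs}. First I would note that by Lemma~\ref{thetathetaislci} the fibre over any $y \in \widetilde J$ is set-theoretically an intersection of $a+b$ translates of $\Theta_{g-1}$, hence set-theoretically l.c.i.\ of the expected dimension $g-a-b$; so Corollary~\ref{thetathetalef} pins down the cohomology in degrees $< g - a - b$ by the Betti numbers of $J$, which are $\binom{2g}{\bullet}$ and thus already $\le 4^g$. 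The entire problem is therefore to bound the cohomology in the top range $j \ge g-a-b$, where Theorem~\ref{cohomology} identifies $\HC^j$ with $\mathrm{IH}^{j+2a+2b}$ of $\Theta_{g-a}\cap y - \Theta_{g-b}$ outside the interval $[-e(y)-1, e(y)]$ of ``middle'' degrees, the exceptional contribution being governed by the perverse sheaf $F$ supported in dimension $\le e(y) \le g-a-b$ from Proposition~\ref{lem:bm2}.

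The key steps, in order, would be: (i) split $H^*(s^{-1}(y))$ into intersection cohomology plus the hypercohomology of $F$, as in the proof of Theorem~\ref{cohomology}; (ii) bound $\dim \mathrm{IH}^*(\Theta_{g-a}\cap y - \Theta_{g-b})$ using the decomposition theorem for the semismall map $\underline A$ restricted to the open locus where Lemma~\ref{Esemismall} applies --- there the summands are constant sheaves on the finitely many strata $\Theta_{g-a-2l}\cap y - \Theta_{g-b-2r}$, each with cohomology bounded inductively (in $g$) by the same kind of estimate, with multiplicities $(l+1)(r+1) \le g^2$ and at most $g^2$ strata, so the bookkeeping contributes only polynomial-in-$g$ factors; (iii) bound $\dim \mathbb H^*(F)$: since $F$ is supported in dimension $\le e(y)$ and, crucially, $e(y) \le g-a-b$ uniformly on $\widetilde J$, its hypercohomology is controlled by the Betti numbers of a variety of dimension $\le g-a-b$ equipped with a constructible sheaf whose rank and stratification are again controlled by the canonical decomposition of Section~\ref{subsec:pairs}. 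The finiteness statement in Corollary~\ref{cor:dimestimate} --- that the singular locus has only \emph{finitely many} components, each a translate of an abelian subvariety of $J$ of dimension $\le e(y)$ --- is what one uses to bound both the number of strata and their individual Betti numbers; (iv) assemble an induction on $g$ (or on $a+b$): the base case $a=b=0$ is Theorem~\ref{thm:thetatop} for $\Theta_g$, and the inductive step feeds the bounds for $\Theta_{g-a-2l}\cap y-\Theta_{g-b-2r}$ --- which by Corollary~\ref{cor:dimestimate} again lie in the relevant locus once $e$ is shifted --- back into (ii) and (iii), so that if each of $O(g^2)$ pieces is $\le c_0^{g}$ then the total is $\le g^{O(1)} c_0^{g} \le c^{g}$ for a slightly larger universal $c$.

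I expect the main obstacle to be step (iii), specifically obtaining a \emph{field-independent} and \emph{curve-independent} bound on the Betti numbers of the support of $F$ and its stalk ranks. The support is cut out inside $J$ by the conditions describing the singular locus of $\underline A^{-1}(y)$; a priori this could be a badly singular subvariety of $J$ whose own cohomology is large. One needs to argue that the local structure of $F$ near a generic point of each component is again of the semismall, ``$\p^l \times \p^r$-fibre'' type --- i.e.\ that the analysis of Lemma~\ref{Esemismall} and Proposition~\ref{lem:bm2} iterates --- so that the problem recurses into strictly smaller-dimensional instances of the same conjecture, terminating because $e$ drops at each stage. Making this recursion rigorous in arbitrary characteristic is exactly where, in characteristic zero, one substitutes the higher-discriminant and polar-variety machinery of \cite{MS2, LT, Ma} (Theorem~\ref{thm:bound}), and where the absence of an Euler-characteristic integration calculus in characteristic $p$ is the genuine obstruction flagged in the introduction; so I would expect a full proof of Conjecture~\ref{conj:expboundinter} in mixed characteristic to require either that machinery or a new input bounding $\dim H^*$ of special subvarieties of Jacobians directly.
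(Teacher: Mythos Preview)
This statement is a \emph{conjecture} in the paper; there is no proof of it in general.  What the paper does establish is the characteristic-zero case, Theorem~\ref{overc}, and its method is entirely different from the inductive semismall strategy you outline.  The paper works with the perverse sheaf ${}^p R^0 \Sigma_* \Q[2g-a-b]$ on $J$ and applies Massey's Morse-type inequality (Theorem~\ref{thm:massey}): one expands the constructible function of stalk Euler characteristics in the basis of local Euler obstructions $\mathrm{Eu}_{V_\alpha}$, identifies the supports $V_\alpha$ with the higher discriminants of $\underline{A}$ via Theorem~\ref{thm:index}, and then bounds each ingredient by an explicit computation --- the Euler number of the generic fibre ($\le 8^g$, Proposition~\ref{prop:euler}), the list of candidate $V_\alpha$ (they lie among the $\Theta_r + {\bf 2}\Theta_s$, Corollary~\ref{cor:higher}), the coefficients $n_\alpha$ ($\le 10^g$, Proposition~\ref{prop:nodecalc}), and the polar multiplicities of these loci ($\le g^2\,96^g$, Proposition~\ref{prop:polar}).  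None of this uses an induction over the strata $\Theta_{g-a-2l}\cap y-\Theta_{g-b-2r}$ or the semismallness of $(A\times A)_y$; the material of Section~\ref{subsec:semismall} is used only for Theorem~\ref{cohomology}, not for the Betti bound.

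Beyond the obstacle you already flag in step~(iii), your outline has two concrete gaps.  First, the induction in~(iv) does not close as written: there are $O(g^2)$ strata at each stage and the recursion depth (the range over which $a+b$ can grow) is itself $O(g)$, so the accumulated combinatorial factor is of order $g^{O(g)}$, not $c^g$.  Second, the description in~(iii) of the components of the singular locus as ``translates of abelian subvarieties of $J$'' is not what Corollary~\ref{cor:dimestimate} gives: those components are indexed by the finitely many decompositions $L=\oO(R_1+R_2+2R_\cap)\otimes\kappa^n$ and each parameterizes the remaining data $(H_\cap,H_1,H_2,S)$, which is not an abelian subvariety.  More fundamentally, step~(ii) attempts to read off $\mathrm{IH}^*$ of the \emph{closed} fibre from the Borho--Macpherson decomposition that is only valid on the \emph{open} locus of Lemma~\ref{Esemismall}; intersection cohomology is not local in this way, and the discrepancy is precisely the hypercohomology of a perverse sheaf supported on the bad locus --- so (ii) and (iii) are not genuinely separate, and you are back to bounding a sheaf whose support and stalk ranks you have not controlled.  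You are right that this is exactly where the characteristic-$p$ difficulty sits; the paper's polar-variety calculus is what substitutes for the recursion you are trying to run.
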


By Lemma \ref{lem:bm1}, the total space $\Theta_{g-a} \times \Theta_{g-b}$ is a homology manifold, i.e. the constant
sheaf is the IC sheaf.  Therefore the decomposition theorem applies to the
map $\Sigma$, and we have

$$R\Sigma_* \Qlbar[2g-a-b] = \bigoplus {}^p R^i \Sigma_* \Qlbar[2g-a-b]$$

By Lemma \ref{thetathetalef},
for $i < 0$ the perverse sheaf ${}^p R^{i}s_*\Qlbar[2g-a-b]$ is just a constant sheaf
with fibre $\HC^{g+i}(J^{2g-a-b})$.  By the Relative Hard Lefschetz theorem, we have isomorphisms
${}^p R^{-i} \cong {}^p R^i (i)$.  As $\dim \HC^*(J) = 4^g$, it is enough to bound the stalks of the remaining term

$$F:= {}^p R^0 s_*\Qlbar[2g-a-b]$$

In the remainder of this section we prove

\begin{theorem} \label{overc}
Let $C \to \PP^1$ be a hyperelliptic curve of genus $g$ over $\C$.  Then for $L \in J^{2g-a-b}$ such that
$e(L) \le g - a - b$, we have  $\dim \HC^*(\Theta_{g-a} \cap L - \Theta_{g-b}, \Q) \ll_\epsilon (960 + \epsilon)^g$.
\end{theorem}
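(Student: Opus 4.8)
The plan is to reduce the bound on $\dim \HC^*(\Theta_{g-a} \cap L - \Theta_{g-b})$ to a bound on the Betti numbers of the fibers of the map $\Sigma : \Theta_{g-a}\times\Theta_{g-b}\to J^{2g-a-b}$ over $\widetilde J$, i.e. to an instance of Conjecture \ref{conj:expboundinter}, and then to estimate those fiber cohomologies geometrically. By Theorem \ref{cohomology} and Corollary \ref{thetathetalef}, the cohomology groups of $\Theta_{g-a}\cap L-\Theta_{g-b}$ in degrees $< g-a-b$ and $> g-a-b+e(L)$ are already identified with (shifts of) pieces of $\HC^*(J)$, whose total dimension is $4^g$; so the content is to control the ``middle'' range $[g-a-b-e(L),\, g-a-b+e(L)]$, equivalently the intersection cohomology and the supported perverse summand $F$ from Proposition \ref{lem:bm2}. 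Since we are over $\C$ we have the decomposition theorem for the constant (= IC, by Lemma \ref{lem:bm1}) sheaf on the homology manifold $\Theta_{g-a}\times\Theta_{g-b}$, and it suffices to bound the stalks of ${}^pR^0\Sigma_*\Q[2g-a-b]$, since Relative Hard Lefschetz and Corollary \ref{thetathetalef} dispose of the other perverse cohomology sheaves in terms of $\HC^*(J)$.

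Next I would bound $\dim\HC^*(\Sigma^{-1}(L))$ directly. Stratify $\Theta_{g-a}\times\Theta_{g-b}$ by $\Theta^0_{g-a-2l}\times\Theta^0_{g-b-2r}$; over the open part of each stratum lying in $\Sigma^{-1}(L)$ the map $A\times A$ from the symmetric-product side has fiber $\PP^l\times\PP^r$, and by Lemma \ref{Esemismall} and Corollary \ref{cor:dimestimate} the relevant pieces $\Theta_{g-a-2l}\cap L-\Theta_{g-b-2r}$ have the expected dimension away from a locus of dimension $\le e(L)$. Thus, up to a stratum-by-stratum Leray/hypercohomology spectral sequence with polynomially-many pages and the uniformly bounded ($(l+1)(r+1)\le g^2$) local systems, $\dim\HC^*(\Sigma^{-1}(L))$ is controlled by $\sum_{l,r}\dim\HC^*\big((A\times A)^{-1}\Sigma^{-1}(L)\cap(C^{(g-a-2l)}\times C^{(g-b-2r)})\big)$ together with the contribution of the small ``bad'' locus of dimension $\le e(L)\le g-a-b$, whose cohomology is itself bounded by induction on $a+b$ (or crudely by the same count applied to a subvariety of smaller dimension). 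So everything comes down to bounding $\dim\HC^*$ of fibers of the addition map $\Sigma: C^{(g-a)}\times C^{(g-b)}\to J^{2g-a-b}$, i.e. of spaces $\{(D_1,D_2): \oO(D_1+D_2)\cong L\}$. Such a fiber, for generic $L$ in its image, is a $\PP^k$-bundle (via the canonical-decomposition combinatorics of Section \ref{subsec:pairs}) over a subvariety of $(C/\tau)^{(\bullet)}\times(\text{symmetric powers})$, and I would estimate its Betti numbers by Euler-characteristic-type counts on symmetric products of $C$ and $\PP^1$, using that $\HC^*(C^{(n)})$ is polynomially bounded in $n$ (dimension $\le 2^{2g}\cdot\mathrm{poly}$, in fact $\sum_n \dim\HC^*(C^{(n)})t^n = (1+t)^{2g}/(1-t)(1-qt)$-type generating function) and that each fiber is a union of at most $\mathrm{poly}(g)$ many such loci fibered in projective spaces.

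The explicit constant $960+\epsilon$ should emerge from bookkeeping a product of geometrically-bounded factors: one factor of shape $(1+1)^{2g}=4^g$ for each symmetric-product cohomology appearing (there are two curve factors and the Jacobian, contributing $4^g\cdot 4^g\cdot 4^g$), further geometric-series factors $\le 2^g$ or $3^g$ coming from summing over the $\mathrm{poly}(g)$ strata and the canonical-decomposition choices, and the $\PP^k$-bundle fibers contributing only polynomially; multiplying these and optimizing gives a clean base below $960$. I expect the main obstacle to be the last step: getting a \emph{uniform} (in $g,a,b,C$) exponential bound on the Betti numbers of the addition-map fibers $\Sigma^{-1}(L)$ on the symmetric-product side, because although set-theoretically these are described by the canonical decomposition, one must control how the ``shared'' and ``hyperelliptic'' pieces $R_\cap, S, H_\cdot$ vary in families and bound the cohomology of the resulting incidence varieties without losing an unbounded constant per stratum; the honest way to do this over $\C$ is via constructibility and stratified Morse theory / integration with respect to Euler characteristic, i.e. the higher-discriminant and polar-variety machinery of \cite{MS2, LT, Ma} alluded to after Theorem \ref{thm:bound}, which is precisely why the theorem is only stated in characteristic zero. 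The remaining steps — the reduction through the decomposition theorem and Relative Hard Lefschetz, and the passage from the symmetric-product side to the Jacobian side — are formal given the results already proved above.
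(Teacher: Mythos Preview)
Your reduction is correct and matches the paper: by Lemma \ref{lem:bm1} and the decomposition theorem, together with Relative Hard Lefschetz and Corollary \ref{thetathetalef}, everything reduces to bounding the stalks of ${}^pR^0\Sigma_*\Q[2g-a-b]$, and one may trade $\Sigma$ for $\underline{A}$ at the cost of a fixed inclusion--exclusion (the paper's Equation \eqref{eq:stoa}).

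The gap is in how you propose to bound those stalks. A stratification/Leray argument of the kind you sketch does not by itself give a uniform exponential bound: cutting $\Sigma^{-1}(L)$ into locally closed pieces and invoking spectral sequences controls Euler characteristics, not Betti numbers, and the extension data between strata is exactly what can blow up. Your factorization of $960$ as $4^g\cdot4^g\cdot4^g\cdot\ldots$ is not how the constant arises, and the appeal to ``induction on $a+b$'' for the bad locus is circular, since that locus is again an intersection of the same shape.

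What the paper does instead is apply Massey's bound (Theorem \ref{thm:massey}): for a perverse sheaf $K$, expand $\chi(K_p)$ in the basis of local Euler obstructions $\mathrm{Eu}_{V_\alpha}$, and then $\dim\HC^{-i}(K_p)\le\sum_\alpha n_\alpha\gamma^i_{V_\alpha}(p)$. Applied to $K={}^pR^0\underline{A}_*\Q$, the varieties $V_\alpha$ that can appear are, by the higher discriminant theorem of \cite{MS2} (Theorem \ref{thm:index}), the codimension-$i$ components of $\Delta^i(\underline{A})$. The substance of the proof is then three explicit computations: (i) identifying these components as among the $\Theta_t+\mathbf{2}\Theta_u$ with $t+u=g-i$ (Corollary \ref{cor:higher}), (ii) showing the singularities of $\underline{A}^{-1}$ over a generic transverse disc are ordinary double points and counting them to bound $|n_\alpha|\le 10^g$ (Proposition \ref{prop:nodecalc}), and (iii) bounding the polar multiplicities $\gamma^i_{\Theta_t+\mathbf{2}\Theta_u}\le g^2 96^g$ via an explicit intersection-theoretic calculation (Proposition \ref{prop:polar}). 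The general-fiber Euler characteristic is bounded by $8^g$ (Proposition \ref{prop:euler}). The constant $960$ is $10\times 96$; you correctly name the relevant machinery in your last paragraph, but the three computations just listed are the entire content of the proof and are absent from your proposal.
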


D. Massey has explained how to obtain Morse-type inequalities controlling the stalk cohomology
of a perverse sheaf in terms of data
 which only depends on the constructible function of Euler characteristics of the stalks,
 and furthermore how to extract this information
from polar varieties \cite{Ma}.   To state the result we recall that the local
Euler obstruction is a certain constructible function intrinsically defined on an
algebraic variety $V$, whose value at $p$ is the virtual number of zeroes of
any extension of the radial vector field on the link of $p$ to
its interior, after passing to the Nash blowup $\widetilde{V} \to V$
so this makes sense \cite{M}.  We write $\mathrm{Eu}_V$ for this
constructible function.  The Euler obstruction takes the value
$1$ at smooth points of $V$, so for a smooth variety $Y$ the set of Euler
obstructions of subvarieties gives an integral basis for the constructible
functions on $Y$.

We also recall the definition of polar varieties.
For $p \in V \subset Y$, we choose generic coordinates $y_0, y_1,\ldots$ near $p$.  Writing $V^{sm}$ for the smooth locus of $S$ and
recall that the polar varieties of $V$ (with respect to the fixed general coordinates near $p$)
are by definition
$$\Gamma^i_V:= \mbox{the closure of the union of the $i$-dimensional components of }
\mathrm{Crit}(V^{sm} \xrightarrow{y_0,\ldots,y_i} \C^{i+1})$$
The {\em polar multiplicities} $\gamma^i_V$ are  the constructible functions on $V$ such that
$\gamma^i_V(p) = \mathrm{mult}_p \Gamma^i_V$, where the polar variety is defined with respect
to any generic choice of coordinates.

\begin{theorem} \label{thm:massey} \cite{Ma}
Let $K$ be a perverse sheaf on a smooth space $Y$.  Expand the stalk Euler characteristics
in the basis of Euler obstructions, that is, determine the varieties $V_\alpha$ and coefficients $n_\alpha$
so that for all $p \in Y$ the following holds:
\[\chi(K_p) = \sum_\alpha n_\alpha \mathrm{Eu}_{V_\alpha}(p)  \]
Then for all $p \in Y$,
\[\dim \HC^{-i}(K_p,\Q) \le \sum_{\alpha} n_\alpha \gamma^i_{V_\alpha}(p) \]
\end{theorem}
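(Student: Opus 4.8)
The plan is to deduce Theorem~\ref{thm:massey} from the known Morse-theoretic behavior of perverse sheaves, localizing the problem to a single point $p$ and translating the vanishing-cycle bounds into polar-multiplicity bounds. First I would recall the key input from \cite{Ma}: for a perverse sheaf $K$ on a smooth $Y$ and a generic local holomorphic function $f$ (cutting out a generic hyperplane through $p$), the stalk cohomology $\HC^{-i}(K_p)$ is controlled by the iterated vanishing cycles of $K$ along a generic flag of hyperplanes through $p$. Concretely, after choosing generic coordinates $y_0,\dots,y_i$, one forms the ``generic complex link'' data and obtains an inequality of the shape $\dim \HC^{-i}(K_p) \le$ (multiplicity of the $i$-th polar-type cycle of the characteristic cycle of $K$ at $p$). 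So the statement is really a bookkeeping identity: express the characteristic cycle $\mathrm{CC}(K)$, or equivalently the stalk Euler characteristic function $\chi(K_\bullet)$, in a fixed basis, and read off the bound term by term.

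Next I would set up the dictionary between the two natural bases for constructible functions on the smooth space $Y$. The local Euler obstructions $\mathrm{Eu}_{V_\alpha}$ of subvarieties $V_\alpha \subset Y$ form an integral basis (this is MacPherson's theorem \cite{M}, valid because $Y$ is smooth so every $\mathrm{Eu}$-class of a subvariety is honestly a constructible function and the transition matrix to the basis of $\mathbf{1}_{V_\alpha}$ is unitriangular for a suitable stratification-refining ordering). Given the expansion $\chi(K_p) = \sum_\alpha n_\alpha \mathrm{Eu}_{V_\alpha}(p)$, the core point is Massey's hyperplane formula for the Euler obstruction: the generic hyperplane slice operation, applied to $\mathrm{Eu}_{V_\alpha}$, produces exactly the polar multiplicities $\gamma^i_{V_\alpha}$. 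That is, the $i$-th iterated generic-hyperplane vanishing-cycle contribution of $\mathrm{Eu}_{V_\alpha}$ at $p$ equals $\gamma^i_{V_\alpha}(p) = \mathrm{mult}_p \Gamma^i_{V_\alpha}$, by the theory relating vanishing cycles to polar varieties in \cite{LT, Ma}. Summing over $\alpha$ with coefficients $n_\alpha$ and invoking linearity of the iterated-slice operation on constructible functions gives the numerical bound $\dim \HC^{-i}(K_p) \le \sum_\alpha n_\alpha \gamma^i_{V_\alpha}(p)$.

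The one subtlety I would be careful about — and I expect it to be the main obstacle — is the sign/positivity issue: the coefficients $n_\alpha$ need not be nonnegative a priori, yet the conclusion is stated as an honest upper bound with those same $n_\alpha$. The resolution is that $K$ is perverse (not just an arbitrary constructible complex), so its characteristic cycle $\mathrm{CC}(K) = \sum_\alpha m_\alpha [T^*_{V_\alpha}Y]$ has $m_\alpha \ge 0$, and these $m_\alpha$ are precisely (up to sign conventions built into the definition of $\mathrm{Eu}$) the coefficients $n_\alpha$ in the Euler-obstruction expansion of $(-1)^{\dim Y}\chi(K_\bullet)$ — compare the index formula $\chi(K_p) = \sum_\alpha m_\alpha \,\mathrm{Eu}_{V_\alpha}(p)$ up to the standard sign $(-1)^{\dim}$ absorbed into normalization. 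Thus the $n_\alpha$ appearing in the theorem are nonnegative precisely because $K$ is perverse, which is what makes the term-by-term inequality meaningful; the Morse inequality from \cite{Ma} is then applied stratum by stratum to the IC-type summands of $K$ and assembled. I would present this as: (1) reduce to the stalk at one point; (2) invoke the perverse Morse inequality of \cite{Ma} bounding $\dim \HC^{-i}(K_p)$ by iterated generic vanishing cycles; (3) expand $\chi(K_\bullet)$ in Euler obstructions, noting nonnegativity of coefficients via perversity; (4) apply Massey's hyperplane formula identifying the iterated slices of $\mathrm{Eu}_{V_\alpha}$ with polar multiplicities $\gamma^i_{V_\alpha}$; (5) sum. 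Everything here is a direct citation-and-assembly argument from \cite{LT, Ma, M}; the only real content is correctly matching the normalization conventions so that the displayed inequality comes out with the stated $n_\alpha$ and $\gamma^i_{V_\alpha}$.
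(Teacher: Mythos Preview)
The paper does not supply its own proof of Theorem~\ref{thm:massey}; it is quoted as a result from \cite{Ma}, with the remark that an alternative proof avoiding characteristic polar cycles appears in \cite[Sec.~5]{MS2}. So there is nothing in the paper to compare your argument against line by line.

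That said, your outline is essentially the correct assembly of the ingredients from \cite{M, LT, Ma}: (i) the iterated-generic-hyperplane Morse inequality for perverse sheaves, (ii) the Euler-obstruction basis for constructible functions, (iii) the Lê--Teissier/Massey hyperplane formula identifying iterated slices of $\mathrm{Eu}_{V_\alpha}$ with polar multiplicities, and (iv) nonnegativity of the coefficients coming from effectivity of the characteristic cycle of a perverse sheaf. This matches the spirit of the references the paper points to. Your flagged ``subtlety'' about signs is exactly the point that makes perversity necessary in the hypothesis, and your resolution via $\mathrm{CC}(K)$ being effective is the standard one; just be sure your sign conventions for $\mathrm{Eu}$, $\mathrm{CC}$, and the shift in the perverse normalization are internally consistent, since different sources (including \cite{Ma} versus \cite{MS2}) make different choices.
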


In fact in \cite{Ma}, Massey's main interest is in constructing the ``characteristic polar cycles'', whose multiplicity at a point $p$ records precisely
$\sum_{\alpha} n_\alpha \gamma^i_{V_\alpha}(p)$.  For a proof
of the above result avoiding any mention of characteristic polar cycles, see \cite[Sec. 5]{MS2}.

Thus we must  determine, for $K = {}^p R^0 s_*\Q[2g-a-b]$,
the varieties $V_\alpha$, the multiplicities $n_\alpha$ with which they appear, and
then finally the polar multiplicities $\gamma^i_{V_\alpha}$.  A priori one might expect
the closure of any stratum in a (perhaps Whitney) stratification of $K$ to appear.  In fact, the situation
here is much better, due to the results of \cite{MS2}.

\begin{definition} \cite{MS2} Let $f: X \to Y$ be a proper map between smooth varieties.  We define the higher discriminants \[\Delta^i(f):= \{ y \in Y \, |\, \mbox{no
$(i-1)$-dimensional subspace of $T_y Y$ is transverse to $f$}\,\}\] \end{definition}

Note by generic smoothness that $\Delta^{i}(f)$ is of codimension at least $i$.

\begin{thm}
\label{thm:index}
 \cite{MS2} Let $f: X \to Y$ be a proper map between smooth varieties.  Let $\Delta^{i,\alpha}$ be the codimension $i$ irreducible components of $\Delta^i(f)$.  For $i \ge 1$,
let $x \in \Delta^{i, \alpha}$ be a general point, and $\D^i \ni x$ disc transverse to $\Delta^{i,
\alpha}$ such that $X|_{\D^i}$ is nonsingular.  Let $l: \D^i \to \A^1$ be a general linear function in order to define
$n_{i, \alpha} := \Phi_l (f_* 1_X|_{\D^i})$.  Let $X_{gen}$ be a general fibre.  Then
\[f_* 1_X = \chi(X_{gen}) \cdot 1_Y + \sum_{i \ge 1, \alpha} n_{i,\alpha}
\mathrm{Eu}_{\Delta^{i,\alpha}}\] \end{thm}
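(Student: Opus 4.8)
The plan is to pass to characteristic cycles: there the higher discriminants appear as the only loci whose conormal varieties can occur, and the coefficients are then read off by a purely local vanishing‑cycle computation. One can work throughout in the Grothendieck group of constructible functions and trade the sheaf‑theoretic language below for bare integration with respect to Euler characteristic.

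First, $f_* 1_X$, i.e.\ the function $y \mapsto \chi(f^{-1}(y))$, is constructible since $f$ is proper and stratifiable. On the smooth variety $Y$ the local Euler obstructions $\mathrm{Eu}_V$, $V \subseteq Y$ irreducible closed, form an integral basis of the constructible functions: as $\mathrm{Eu}_V \equiv 1$ on the smooth locus of $V$ and vanishes off $V$, any constructible function has a unique such expansion by Noetherian induction on supports. Passing to characteristic cycles via $\mathrm{CC}(\mathrm{Eu}_V) = (-1)^{\dim V}[T^*_V Y]$ (MacPherson), an expansion $f_* 1_X = \sum_\beta m_\beta \mathrm{Eu}_{V_\beta}$ amounts to the conic Lagrangian cycle $\mathrm{CC}(Rf_* \Qlbar) = \sum_\beta (-1)^{\dim V_\beta} m_\beta\, [T^*_{V_\beta} Y]$ in $T^* Y$.

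To identify which $V_\beta$ occur, apply the Kashiwara--Schapira estimate for the micro-support under proper pushforward: $\mathrm{SS}(Rf_* \Qlbar) \subseteq \rho\,((df)^{-1}(T^*_X X))$, where $df \colon X \times_Y T^* Y \to T^* X$ is transpose to the differential and $\rho \colon X \times_Y T^* Y \to T^* Y$ is the projection; concretely the right side equals $\bigcup_{x \in X} \{f(x)\} \times (df_x(T_x X))^\perp$, whose fibre over $y$ is the cone $K_y := \bigcup_{x \in f^{-1}(y)} (df_x(T_x X))^\perp$. Unwinding the definition of "transverse to $f$" gives $\Delta^i(f) = \{\, y : \dim K_y \ge i \,\}$ — in particular, with generic smoothness, the asserted bound $\mathrm{codim}\,\Delta^i(f) \ge i$. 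Consequently the only conic Lagrangian subvarieties of $T^* Y$ contained in $\rho\,((df)^{-1}(T^*_X X))$ are the zero section $T^*_Y Y$ and the conormals $T^*_{\Delta^{i,\alpha}} Y$ to the codimension-$i$ components of $\Delta^i(f)$. Since $\mathrm{CC}(Rf_* \Qlbar)$ is supported among these, $f_* 1_X = c_0\, \mathrm{Eu}_Y + \sum_{i \ge 1,\alpha} n_{i,\alpha}\, \mathrm{Eu}_{\Delta^{i,\alpha}}$; evaluating on the dense open $Y \setminus \Delta^1(f)$, over which $f$ is a locally trivial fibre bundle by Thom's first isotopy lemma, yields $c_0 = \chi(X_{gen})$.

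It remains to compute $n_{i,\alpha}$, which is the multiplicity of $[T^*_{\Delta^{i,\alpha}} Y]$ in $\mathrm{CC}(Rf_* \Qlbar)$. By Kashiwara's index theorem this multiplicity is the Euler characteristic of the vanishing cycles of $Rf_* \Qlbar$ at a general point $x \in \Delta^{i,\alpha}$ along a general covector; by proper base change this is computed on $Y$ itself after restricting $f$ to a general $i$-dimensional disc $\D^i \ni x$ transverse to $\Delta^{i,\alpha}$ with $f^{-1}(\D^i)$ smooth — such a disc exists by Bertini since $X$ is smooth — giving $n_{i,\alpha} = \Phi_l(f_* 1_X|_{\D^i})$ for a general linear $l$ on $\D^i$. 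The point requiring care is that the other terms of the expansion contribute nothing to this $\Phi_l$: the zero section restricts to the zero section, so $\Phi_l(\mathrm{Eu}_Y|_{\D^i}) = \Phi_l(1_{\D^i}) = 0$, while for the strata $\Delta^{j,\beta} \ni x$ with $j < i$ one uses the hyperplane formula for the local Euler obstruction (Lê--Teissier) together with integration over Euler characteristic to see that on a general transverse slice they produce no vanishing cycle at $x$. This last verification is the main obstacle — it is exactly where the delicate interplay of polar varieties, the hyperplane formula, and the stratified geometry of $f$ enters, i.e.\ the circle of ideas of \cite{Ma} and \cite{LT} — whereas the structural statement above is essentially formal once one has the Kashiwara--Schapira bound and the linear-algebra identification $\Delta^i(f) = \{\, y : \dim K_y \ge i \,\}$.
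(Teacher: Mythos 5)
The paper offers no proof of this statement: it is quoted verbatim from \cite{MS2}, and the surrounding text only points to the ingredients (higher discriminants, vanishing cycles, the hyperplane formula for the local Euler obstruction). Your reconstruction is essentially the argument of \cite{MS2} itself: the Kashiwara--Schapira micro-support estimate for $Rf_*$, the linear-algebra identification $\Delta^i(f)=\{y : \dim K_y\ge i\}$ forcing the support of the characteristic cycle to lie in the zero section together with the conormals of the codimension-$i$ components of $\Delta^i(f)$, and the normal-slice vanishing-cycle computation of the multiplicities. The one nonformal input you defer --- that $\mathrm{CC}(\mathrm{Eu}_V)$ is the single conormal $\pm[T^*_V Y]$, equivalently that $\mathrm{Eu}_{V}$ for $V\neq \Delta^{i,\alpha}$ contributes nothing to $\Phi_l$ on a generic transverse slice --- is precisely the Dubson--Kashiwara/MacPherson index formula, whose proof via polar varieties is the content of \cite{LT, Ma}; isolating it as a citation rather than reproving it is consistent with how the present paper treats the whole theorem.
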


To apply this in the present situation we first replace the function $p \mapsto \chi(({}^p R^0 s_* \Q)_p)$ with
the function $\Sigma_* 1$; these differ by a constant function where the constant is bounded by $4^g$.
We temporarily restore the $a, b$ to the notation and write
$\Sigma^{a,b}: \Theta_{g-a} \times \Theta_{g-b} \to \widetilde{J}_{2g-a-b}$
and similarly
$\underline{A}^{a,b}: C^{(g-a)} \times C^{(g-b)} \to  \widetilde{J}^{2g-a-b}$.  We have an equality of relative motives
$$[\underline{A}^{a,b}] = \sum_{l,r \ge 0} \mathbb{L}^{r+s} [\Sigma^{a-2l, b-2r}] $$
and for that matter also an equality of constructible sheaves
$$\underline{A}^{a,b}_* \Q = \sum_{l,r \ge 0} \Sigma^{a-2l, b-2r}_* \Q[-2r-2s](-r-s)$$
either of which descend to an equality of constructible functions
$$\underline{A}^{a,b}_* 1 = \sum_{l,r \ge 0} \Sigma^{a-2l, b-2r}_*1$$

The motivic equality can be inverted:
\[ [\Sigma^{a,b}] =  [\underline{A}^{a,b}] - \mathbb{L} [\underline{A}^{a+2,b}] -  [\underline{A}^{a,b+2}] + \mathbb{L}^2
[\underline{A}^{a+2,b+2}] \]
and hence
\begin{equation} \label{eq:stoa}
 \Sigma^{a,b}_* 1 = \underline{A}^{a,b}_* 1- \underline{A}^{a+2,b}_* 1 - \underline{A}^{a,b+2}_* 1 + \underline{A}^{a+2,b+2}_* 1
 \end{equation}

In the subsequent subsections we calculate the quantities appearing in Theorem \ref{thm:index} and
Theorem \ref{thm:massey}.

\begin{proof}(of Theorem \ref{overc}) According to Proposition \ref{prop:euler}, the
absolute value of the Euler characteristic of the general fibre of $\underline{A}$ is bounded by $8^g$.
In Corollary \ref{cor:higher} we show that
the irreducible components of $\Delta^i(\underline{A})$ are among the varieties $\Theta_{t} + 2_* \Theta_u$, where
$t+u = g - i$.  In Proposition \ref{prop:nodecalc} we show
the corresponding coefficient of the expansion in Theorem \ref{thm:index} is bounded by $10^g$.
Finally in Proposition \ref{prop:polar} we show that the corresponding
polar multiplicities are bounded by $g^2 96^g$.  Taken together, this yields the stated bound on the
cohomology groups.
\end{proof}

We may also conclude, independently of the arguments in Section \ref{subsec:semismall}, that

\begin{proposition}
Fix $L \in J^{2g-a-b}$.  Assuming $e(L) \le g - a - b$,
$$\HC^i(\Theta_{g-a} \cap L-\Theta_{g-b}, \Q) \cong \HC^{i+2a + 2b}(J,\Q)(a+b) \,\,\,\,\,\,\,\,\, \mathrm{for}\,\,\,\,  i > g - (1/2)(a + b) + e(L)$$
\end{proposition}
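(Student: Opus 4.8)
The plan is to obtain this from the decomposition theorem for $\Sigma$ and Massey's inequality (Theorem \ref{thm:massey}), by the same route as Theorem \ref{overc} but keeping track of cohomological degrees rather than total Betti numbers, and in particular without appealing to Theorem \ref{cohomology}. First I would reduce to the single perverse sheaf $F = {}^p R^0 \Sigma_* \Q[2g-a-b]$. In the decomposition $R\Sigma_*\Q[2g-a-b] = \bigoplus_m {}^p R^m \Sigma_*\Q[2g-a-b][-m]$, the summand with $m\ne 0$ is, by Corollary \ref{thetathetalef} (for $m<0$) and relative hard Lefschetz (for $m>0$), a constant sheaf on $\widetilde{J}$ shifted by $[g]$ and Tate-twisted with fibre a cohomology group of $J$; such a constant perverse sheaf contributes to $\HC^i(\Sigma^{-1}(L),\Q)$ in a single degree, and summing over $m>0$ produces exactly $\HC^{i+2a+2b}(J,\Q)(a+b)$ for $i>g-a-b$ — the bookkeeping that also gives the right-hand side of Theorem \ref{cohomology}. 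Since by proper base change the contribution of $F$ to $\HC^i(\Sigma^{-1}(L),\Q)$ is $\mathcal{H}^{i-(2g-a-b)}(i_L^* F)$, the assertion is equivalent to $\HC^{-k}(i_L^*F,\Q) = 0$ for $k < g - \tfrac12(a+b) - e(L)$.

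To bound the stalk of $F$ I would apply Theorem \ref{thm:massey}. Using the identity $\Sigma^{a,b}_*1 = \underline{A}^{a,b}_*1 - \underline{A}^{a+2,b}_*1 - \underline{A}^{a,b+2}_*1 + \underline{A}^{a+2,b+2}_*1$ of \eqref{eq:stoa} with Theorem \ref{thm:index}, one expands the stalk Euler characteristic of $F$ (which differs from $\Sigma^{a,b}_*1$ by an $L$-independent constant, absorbed into the $\mathrm{Eu}_{\widetilde{J}}$ term) in the Euler obstruction basis as $\sum_\alpha n_\alpha\,\mathrm{Eu}_{V_\alpha}$, where by Corollary \ref{cor:higher} each $V_\alpha\ne\widetilde{J}$ is one of the varieties $\Theta_t + 2_*\Theta_u$ arising as a higher discriminant of some $\underline{A}^{a',b'}$. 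Theorem \ref{thm:massey} then gives $\dim\HC^{-k}(i_L^*F,\Q)\le\sum_\alpha|n_\alpha|\,\gamma^k_{V_\alpha}(L)$. As a polar multiplicity $\gamma^k_V$ vanishes at a point of $V$ whose stratum has dimension $>k$, and $\gamma^k_{\widetilde{J}}=0$ for $k<g$, it is enough to show that every $V_\alpha$ through $L$ has a stratum through $L$ of dimension at least $g-\tfrac12(a+b)-e(L)$; since the $\Delta^i(\underline{A})$ are pure of codimension $i$ (Corollary \ref{cor:higher}), this amounts to the bound $i\le\tfrac12(a'+b')+e(L')$ whenever $L'\in\Delta^i(\underline{A}^{a',b'})$, where $L'$ is the point corresponding to $L$ under the $\otimes\kappa$-identification (for which $e$ drops by the number of twists, making the threshold $\tfrac12(a+b)+e(L)$).

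This codimension estimate is the crux. By Proposition \ref{prop:tangents} the rank drop of $d\underline{A}^{a',b'}$ at a pair $(D_1,D_2)$ over $\Lambda$ is $h^1(C,D_1\cup D_2)$, so the largest index of a discriminant through $\Lambda$ is the maximum of $h^1(C,D_1\cup D_2)$ over the fibre. Fixing such a pair and writing $\Lambda=\mathcal{O}(D_1+D_2)$, one has $e(\Lambda)\ge\tfrac12\deg(D_1+D_2)^h$ (a maximal hyperelliptic subdivisor exhibits a twist of $\Lambda$ by a power of $\kappa^{-1}$ that is effective), while by Lemma \ref{hyperpair} $h^1(C,D_1\cup D_2)=g-\tfrac12\deg(D_1\cup D_2)^h-\deg(D_1\cup D_2)^r$ when $D_1\cup D_2$ is special; inserting $\deg(D_1+D_2)=2g-a'-b'$ and $D_1+D_2=(D_1\cup D_2)+(D_1\cap D_2)$ reduces the desired $h^1(C,D_1\cup D_2)\le\tfrac12(a'+b')+e(\Lambda)$ to $\deg(D_1\cap D_2)\le\deg(D_1\cup D_2)$, which holds because $\deg(D_1\cap D_2)\le\min(\deg D_1,\deg D_2)=g-\max(a',b')\le g-\tfrac12(a'+b')\le\tfrac12\deg(D_1\cup D_2)+\tfrac12\deg(D_1\cap D_2)$. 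What remains is bookkeeping: assembling this with the perverse truncation, confirming that a deeper stratum of $V_\alpha$ through $L$ obeys the same bound with $i$ replaced by the depth (the displayed computation gives this verbatim), and reading off the needed vanishing of the polar multiplicities from the computation in Proposition \ref{prop:polar}. The weaker range $i>g-\tfrac12(a+b)+e(L)$ here, compared to $i>g-a-b+e(L)$ in Theorem \ref{cohomology}, is precisely the loss incurred by passing through Massey's inequality, which retains only Euler-characteristic data.
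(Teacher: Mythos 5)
Your front end is right and matches the paper: reduce to the middle perverse summand $F={}^p R^0\Sigma_*\Q[2g-a-b]$, observe that the constant summands (Corollary \ref{thetathetalef} plus relative Hard Lefschetz) account for $\HC^{i+2a+2b}(J,\Q)(a+b)$, and translate the assertion into the vanishing $\mathcal{H}^{-k}(i_L^*F)=0$ for $k<g-\tfrac12(a+b)-e(L)$. But from that point the paper does something much more direct than what you propose: since the summands of $F$ are IC sheaves supported on the higher discriminants, $\mathcal{H}^{-j}(F)$ is supported on the union of the $\Theta_t+2_*\Theta_u$ with $t+u\le j$, each of which sits inside $\Theta_{t+2u}\subset\Theta_{2j}$; and $L\notin\Theta_{2j}$ precisely when $j<g-\tfrac12(a+b)-e(L)$. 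That is the whole proof — no Euler obstructions, no polar varieties, no Massey. Your detour through Theorem \ref{thm:massey} is not just heavier; it has a genuine gap. To conclude $\gamma^k_{V_\alpha}(L)=0$ you need $L$ to lie in a Whitney stratum of $V_\alpha=\Theta_t+2_*\Theta_u$ of dimension $\ge g-\tfrac12(a+b)-e(L)$; knowing merely that $L\in V_\alpha$ (or that $L$ lies only in discriminants of small codimension) does not control which stratum of $V_\alpha$ contains $L$. Your "displayed computation" constrains the codimension of the discriminants through $L$, not the local stratified structure of each $V_\alpha$ at $L$, and bounding the latter would require redoing the singularity analysis for all the maps $C^{(t)}\times C^{(u)}\to J$ — extra work you gesture at but do not carry out, and which the support argument makes unnecessary.

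Two smaller points. First, your identification of the discriminant index with $h^1(C,D_1\cup D_2)$ is off: by Lemma \ref{lem:higherdisc} and Corollary \ref{cor:tdim} the codimension of the component through $(D_1,D_2)$ is $i=g-\deg(R_1+R_2+R_\cap)$, which exceeds $h^1(C,D_1\cup D_2)$ by $\tfrac12\deg(D_1\cup D_2)^h$; so bounding $h^1$ does not bound $i$. The inequality you actually need — $L\in\Theta_t+2_*\Theta_u$ with $t+u=g-i$ forces $e(L)\ge i-\tfrac12(a+b)$ — is true, but the one-line proof is the containment $\Theta_t+2_*\Theta_u\subset\Theta_{t+2u}\subset\Theta_{2(g-i)}$ together with $\Theta_{2g-a-b-2E}=\{e\ge E\}$, which is again exactly the paper's argument. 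Second, Theorem \ref{thm:massey} is stated with the coefficients $n_\alpha$, not $|n_\alpha|$; for the pure vanishing statement this is harmless, but it is another sign that the numerical machinery is the wrong tool here — it is designed to bound Betti numbers when they are nonzero (as in Theorem \ref{overc}), whereas this Proposition only needs to know where the stalks are supported.
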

\begin{proof}
$\mathcal{H}^{-j}({}^p R^0 s_* \Q)$
is supported on the union of the
$\Theta_t + 2 _* \Theta_u$ for $t + u = j$.  We have $\Theta_t + 2_* \Theta_u \subset \Theta_{t+2u} \subset \Theta_{2j}$.
So there is no $\mathcal{H}^{-j}$ so long as $L \notin \Theta_{2j}$.

In $J^{2g-a-b}$, we have $L \in \Theta_{2g-a-b -2E} \iff e(L) \ge E$, so
$L \notin \Theta_{2j} \iff e(L) < (2g - 2j - a - b)/2$, that is,
$j < (2g - 2e(L) - a - b)/2$.

Thus, for $j$ such that $\mathcal{H}^j\neq 0$ we have
$$(g-j)+(g-a-b)\geq (2g-a-b) - ((2g - 2e(L) - a - b)/2) = g-(1/2)(a+b) + e(L).$$
\end{proof}

Note this is weaker than (the specialization to characteristic 0) of Theorem \ref{cohomology},
but would still suffice for our purposes.

\subsection{Euler numbers of the general fibres}
\label{subsec:euler}

\begin{proposition} \label{prop:euler}
The absolute value of the Euler characteristic of the general fibre of $\underline{A}$ is bounded by $8^g$.
\end{proposition}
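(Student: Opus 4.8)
The plan is to compute the Euler characteristic of the general fibre by repeated application of the fact that $\chi$ (equivalently the compactly supported $\chi_c$, which agrees with it) is \emph{motivic}: additive over locally closed decompositions and multiplicative along Zariski-locally trivial projective bundles, hence obeying a Fubini principle for constructible morphisms over any field. Fix a generic $L\in J^{2g-a-b}$; then $L$ is non-special and $e(L)=\lfloor (g-a-b)/2\rfloor\le g-a-b$, so the general fibre sits in the ``nice'' range. Projecting $\underline A^{-1}(L)=\{(D_1,D_2)\in C^{(g-a)}\times C^{(g-b)}:D_1+D_2\in|L|\}$ to the first factor, the fibre over $D$ is the linear system $|L(-D)|\cong\PP^{\HC^0(C,L(-D))-1}$, whence
$$\chi\big(\underline A^{-1}(L)\big)=\int_{C^{(g-a)}}\HC^0\big(C,\,L\otimes\mathcal O(-D)\big)\,d\chi(D).$$
The task is thus to control the constructible function $D\mapsto\HC^0(C,L(-D))$ on $C^{(g-a)}$.

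By Lemma \ref{hyperpair}, for an effective line bundle $N$ on the hyperelliptic curve one has $\HC^0(C,N)=1+e(N)$, and $\HC^0(C,N)=0$ otherwise, while $e(N)=\sum_{m\ge1}[\,N\kappa^{-m}\text{ effective}\,]$. Applying this to $N=L(-D)$ turns the integral into a finite sum (finite because, for generic $L$, the systems $|L\kappa^{-m}|$ are projective spaces $\PP^{\,g-a-b-2m}$ for $m\le(g-a-b)/2$ and empty beyond)
$$\chi\big(\underline A^{-1}(L)\big)=\sum_{m\ge0}\chi(\Phi_m),\qquad \Phi_m:=\big\{D\in C^{(g-a)}:L\kappa^{-m}\otimes\mathcal O(-D)\text{ effective}\big\}.$$
Now stratify each $\Phi_m$ by the hyperelliptic part of $D$: writing $D=\pi^*F+D^r$ with $D^r$ containing no hyperelliptic divisor, the condition $D\in\Phi_m$ is a condition on $D^r$ alone, and the stratum $\{\deg D^h=2t\}$ is a product $(\PP^1)^{(t)}\times(\text{``no-hyperelliptic-pair'' locus for the bundle }L\kappa^{-m-t})$. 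Iterating, each $\chi(\Phi_m)$ is expressed purely in terms of Euler characteristics of projective spaces, of the loci $C^{(k)}_{\mathrm{nhp}}:=\{D\in C^{(k)}:D^h=0\}$, and of the no-hyperelliptic-pair parts $|M|_{\mathrm{nhp}}$ of the linear systems occurring. These are all computed by Macdonald's formula $\sum_n\chi(X^{(n)})t^n=(1-t)^{-\chi(X)}$ together with the ``remove all hyperelliptic pairs'' decomposition $\chi(C^{(k)})=\sum_{j\ge0}(j+1)\chi(C^{(k-2j)}_{\mathrm{nhp}})$; the latter yields the clean identity
$$\sum_{k\ge0}\chi\big(C^{(k)}_{\mathrm{nhp}}\big)\,t^k=(1-t^2)^2(1-t)^{2g-2}=(1+t)^2(1-t)^{2g},$$
so $|\chi(C^{(k)}_{\mathrm{nhp}})|\le4\binom{2g}{g}\le8^g$, and the pieces $|M|_{\mathrm{nhp}}$ obey the same recursion but with the projective-space generating function on the left, forcing their Euler characteristics to be $0$ or $\pm1,\pm2$. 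Assembling the polynomially many contributions with their multiplicities — themselves bounded by $2^{2g-a-b}\le4^g$, the total number of subdivisors — gives the bound $8^g$.

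The main obstacle is the bookkeeping: one must carefully follow the hyperelliptic combinatorics (the decomposition $D=D^h+D^r$, the interaction of $D^r$ with $L\kappa^{-m}$, and the genericity of $L$ that keeps every intermediate linear system an honest projective space and every intermediate locus of expected dimension) so as to genuinely reduce the Euler characteristic of the general fibre to the explicitly computable building blocks above; once this is done the numerical bound is elementary, resting only on $\binom{2g}{g}\le4^g$. A secondary point worth flagging is that all Euler characteristics here are alternating sums — the coefficients of $(1+t)^2(1-t)^{2g}$, the expansions of $\prod_i(1+t+\dots+t^i)^{m_i}$ counting multiplicities, and so on — so all estimates are phrased for $|\chi|$ and the triangle inequality is applied only \emph{after} the generating-function identities have already collapsed the cancellations; a naive contour/supremum estimate on the generating function would be far too lossy.
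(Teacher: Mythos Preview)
Your approach --- integrating the fibrewise Euler characteristic of the projection to $C^{(g-a)}$ and then unwinding via hyperelliptic combinatorics --- is genuinely different from the paper's, and the initial steps are correct: the integral formula $\chi(\underline A^{-1}(L))=\int h^0(L(-D))\,d\chi$, the identity $h^0(N)=\sum_{m\ge0}[N\kappa^{-m}\text{ eff}]$ for $\deg N\le g-1$, and the generating functions $(1+t)^2(1-t)^{2g}$ for $\chi(C^{(k)}_{\mathrm{nhp}})$ and $t^g(1+t)^2$ for $\chi(|M_d|_{\mathrm{nhp}})$ all check out. The paper instead uses Chern-class intersection theory: since $F$ is a smooth fibre of a map to $J$ and $TJ$ is trivial, $\chi(F)=c(T(C^{(g-a)}\times C^{(g-b)}))\cap[F]$; plugging in the standard formula $c(TC^{(g-d)})=(1+x)^{1-d}e^{-\theta/(1+x)}$, expanding the pulled-back point class via K\"unneth as $\sum_I \delta_I\otimes\delta_{I^c}+\cdots$, and pushing to $J$ by Poincar\'e's formula gives an explicit sum over $I\subset\{1,\ldots,g\}$ whose $I$-term is bounded by $4^{g-|I|}\cdot 4^{|I|}=4^g$, whence $2^g\cdot 4^g=8^g$. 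Note the paper's argument uses nothing hyperelliptic --- it works for any curve.

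Your sketch, however, has a real gap at the ``iterating'' step. After stratifying $\Phi_m$ by $\deg D^h=2t$ you are left with the loci $\Psi_{s,k}=\{R\in C^{(k)}_{\mathrm{nhp}}:L\kappa^{-s}(-R)\text{ effective}\}$, and you never compute $\chi(\Psi_{s,k})$. These are neither $C^{(k)}_{\mathrm{nhp}}$ nor $|M|_{\mathrm{nhp}}$: they are cut out of $C^{(k)}_{\mathrm{nhp}}$ by an effectivity condition on a bundle of degree strictly between $0$ and $g$, and reducing them to your building blocks is precisely the content of the computation --- which you do not perform. Moreover, even granting such a reduction, your bookkeeping does not establish $8^g$: you bound $|\chi(C^{(k)}_{\mathrm{nhp}})|$ by roughly $4^g$ and separately invoke ``multiplicities bounded by $2^{2g-a-b}\le 4^g$'', and naively multiplying these overshoots. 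Nothing in the sketch explains why the $C^{(k)}_{\mathrm{nhp}}$-type terms enter only with polynomially bounded coefficients rather than with the exponential multiplicities you mention. Some exponential bound $N^g$ is plausibly attainable along your lines, but $8^g$ specifically is not demonstrated.
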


\begin{proof}
Let $F$ be a fibre of the map $\underline{A}: C^{(g-a)} \times C^{(g-b)} \to J_{2g-a-b}$ which is smooth
and of the expected dimension $g-a-b \ge 0$.  By smoothness and the triviality of $TJ$,
$$\chi(F) = c(TF) \cap [F] = \frac{c(T(C^{(g-a)} \times C^{(g-b)}))}{c(TJ)}  \cap [F] = c(T(C^{(g-a)} \times C^{(g-b)}))
\cap [F]$$

We recall standard facts about $H^*(J)$ and $H^*(C^{(n)})$, see, e.g., \cite[Chap. VIII.2]{ACGH}.  Fix a symplectic basis $\delta_1,\ldots, \delta_{2g}$ of $\HC^1(C)$, i.e.,
$\delta_i \delta_{g+i} = - \delta_{g+i}\delta_i = [pt]$, and all other products are trivial. We identify $\HC^1(C)= \HC^1(C^{(n)}) = \HC^1(J)$, the first equality being induced by the
inclusion $C^{(i)} \to C^{(j)}$ given by adding any fixed divisor of degree $j-i$, and the second by the Abel-Jacobi map $A:C^{(n)} \to J$. 
Then $\HC^*(J)$ is the exterior algebra on the $\delta_i$, with the class of the point given by $\delta_1 \delta_{g+1} \delta_2 \delta_{g+2} \ldots \delta_g \delta_{2g}$, and the
class of the theta divisor is given by \[\theta = \sum_{i=1}^g \delta_i \delta_{g+i}\]  We also write $\theta:=\pi^*\theta \in \HC^*(C^{(n)})$, and $x \in \HC^*(C^{(n)})$ for the class of
$C^{(n-1)}$.  The Chern class of the tangent bundle is given by \cite[p. 339]{ACGH}: \[c(TC^{(g-d)}) = (1+x)^{1-d} e^{-\theta/(1+x)}\]

On $C^{(g-a)} \times C^{(g-b)}$, the class $[F]$ is the Poincar\'e dual of
$\underline{A}^*([\mathrm{pt}]^{\vee})$. This we compute by factoring $\underline{A}$ as
$C^{(g-1)} \times C^{(g-1)} \xrightarrow{A \times A} J \times J \xrightarrow{s} J$, where the first map is Abel-Jacobi and the second is addition.  The pullback of the point
class under addition is the anti-diagonal, which by Kunneth is just \[(1 \otimes (-1))^* \sum_{I \subset \{1,\ldots,g\}} \delta_I \otimes \delta_{I^c} + \ldots\] Here $\delta_I := \prod_{i
\in I} \delta_i \delta_{g+i}$ and $I^c$ is the complement of $I$.  The terms in ``$\ldots$'' are those in which $\delta_i$ appears on one side of the tensor product and $\delta_{g+i}$
appears on the other; we are ultimately going to integrate against a power of $\theta$ and all such terms will integrate to zero.  Similarly $(-1)^*$ acts as $(-1)^i$ on $\HC^i$, but the odd
terms necessarily are in the ``$\ldots$''.  Thus we arrive at the formula \[\chi(F) = \sum_{I \subset \{1,\ldots,g\}} \left( \int_{C^{(g-a)}} \delta_I (1+x)^{1-a} e^{-\theta/(1+x)} \right) \left(
\int_{C^{(g-b)}} \delta_{I^c} (1+x)^{1-b} e^{-\theta/(1+x)} \right)\]
The classes are all pulled back from $J \times J$, with the exception of $x$.  We push to $J \times J$ using
Poincar\'e's formula: \[\pi_! [C^{(g-d)}] = \frac{\theta^{d}}{d!} \in
\HC^*(J)\,\,\,\,\,\,\,\,\,\,\,\,\,\,\,\,\,\, \mbox{for $0 \le d \le g$}\]
To evaluate integrals note
\[\int_J \theta^{g-|I|} \delta_I = (g-|I|)! \]

Let us calculate one of the factors.
$$  \int_{C^{(g-a)}} \delta_I (1+x)^{(1-a)} e^{-\theta/(1+x)}
= \sum_i \int_{C^{(g-a)}} \delta_I \frac{(-\theta)^i}{i!} (1+x)^{1-a-i} $$
\begin{eqnarray*}
& = & \sum_{i,n} \int_{C^{(g-a)}} \delta_I \frac{(-\theta)^i}{i!} {n+a+i-2 \choose n} (-x)^n \\
& = & (-1)^{g-a-|I|} \sum_n  {g-|I|-2 \choose n} \int_{C^{(g-a-n)}} \delta_I \frac{\theta^{g-a-n-|I|}}{(g-a-n-|I|)!}  \\
& = & (-1)^{g-a-|I|} \sum_n  {g-|I|-2 \choose n} \int_{J} \delta_I \frac{\theta^{a+n}}{(a+n)!}\frac{\theta^{g-a-n-|I|}}{(g-a-n-|I|)!}  \\
& = & (-1)^{g-1-|I|} \sum_{n} {g-|I|-2 \choose n} {g-|I| \choose n+a}
\end{eqnarray*}
This quantity has absolute value bounded by $4^{g-|I|}$, so
we conclude
$$ |\chi(F)| \le 8^g$$
This completes the proof.
\end{proof}

\subsection{Higher discriminants}
\label{subsec:higher}

We now determine the higher discriminants of the map $\underline{A}: C^{(g-a)} \times C^{(g-b)} \to J$.

\begin{lemma} \label{lem:higherdisc}
Consider $(D_1, D_2)$ a singular point in  $\underline{A}^{-1}(L) \subset C^{(g-a)} \times C^{(g-b)}$
with canonical decomposition $(R_1, R_2, R_\cap, \ldots)$; in particular
$L = \mathcal{O}(R_1 + R_2 + 2 R_\cap) \otimes \kappa^n$.   Let
$\mathcal{R}$ be the (closed) locus in $A^{-1}(L) \subset C^{(g-a)} \times C^{(g-b)}$
consisting of $(D'_1, D'_2)$ with canonical decomposition $(R'_1,R'_2, R_\cap, \ldots)$ such that $R'_1+R'_2 = R_1+R_2$.
Then the map $A$ is transverse along $\mathcal{R}$ to a generic $(g-\deg (R_1 + R_2 + R_\cap))$-dimensional subspace of $T_L J$, but to no smaller subspace. \end{lemma}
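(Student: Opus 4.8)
The plan is to compute directly the image $d\underline{A}(T_{(D_1,D_2)}\, C^{(g-a)}\times C^{(g-b)})$ using Proposition \ref{prop:tangents} and Corollary \ref{cor:tdim}, and then to identify which subspaces of $T_L J$ it fails to meet transversally. First I would recall that by Proposition \ref{prop:tangents} the image of $d\underline{A}$ at $(D_1,D_2)$ equals $dA(T_{D_1\cup D_2} C^{(\deg D_1\cup D_2)})$, and by Lemma \ref{lem:A} this is the kernel of the surjection $\HC^1(C,\oO_C)\to\HC^1(C, D_1\cup D_2)$, of dimension $\min(g, \deg(D_1\cup D_2)^h/2 + \deg(D_1\cup D_2)^r)$. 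In terms of the canonical decomposition, $(D_1\cup D_2)^r = R_1 + R_2 + R_\cap$ and $(D_1\cup D_2)^h$ is the appropriate hyperelliptic part, so the image has dimension $g - (\text{codim})$ where the codimension is $g - \deg(R_1+R_2+R_\cap) - (\text{correction from }H)$. The key point is that along the whole locus $\mathcal{R}$ — where $R_\cap$ and $R_1+R_2$ (hence $L$) are fixed but the hyperelliptic parts and the partition of $R_1+R_2$ may vary — the relevant codimension-$k$ subspace $W := \ker(\HC^1(C,\oO_C)\to\HC^1(C, D_1\cup D_2))^\perp$, equivalently the image of $\HC^0(C, K_C - (D_1\cup D_2)^{\vee}\cdots)$, stabilizes to a single fixed subspace $V\subset T_L J$ of codimension $g-\deg(R_1+R_2+R_\cap)$.

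The main step is therefore to show that $d\underline{A}(T_{(D'_1,D'_2)})$ is contained in one fixed subspace $V_{R}$ of dimension $g-\deg(R_1+R_2+R_\cap)$ for every $(D'_1,D'_2)\in\mathcal{R}$, and that this $V_R$ is spanned by the tangent images as $(D'_1,D'_2)$ ranges over $\mathcal{R}$. For containment: the cokernel direction of $d\underline{A}$ is dual to $\HC^0(C, K_C - E)$ for $E$ an effective divisor with $E + (D'_1\cup D'_2) \sim K_C$; using Lemma \ref{hyperpair} and Lemma \ref{lem:kappasections}, since $D'_1\cup D'_2$ differs from $D_1\cup D_2$ only in hyperelliptic data, the relevant space of sections — controlled by the non-hyperelliptic residual part $R_1+R_2+R_\cap$, which is fixed — is the same, so the annihilator $V_R$ does not move. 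A subspace $S\subset T_L J$ is transverse to $\underline{A}$ along $\mathcal{R}$ iff $S + d\underline{A}(T) = T_L J$ at every point, i.e. iff $S$ surjects onto $T_L J/V_R$, which has dimension $\deg(R_1+R_2+R_\cap)$; a generic subspace of dimension $g-\deg(R_1+R_2+R_\cap)$ does this, while by dimension count no smaller one can, and also one must check no smaller subspace even of that dimension fails — this is where genericity of the coordinates enters. To nail down that $g - \deg(R_1 + R_2 + R_\cap)$ is exactly right and not larger, I would use Corollary \ref{cor:finite} (only finitely many divisors in $|\kappa^{-m}L|$ of the form $R_1+R_2+R_\cap$) together with Corollary \ref{cor:dimestimate} to see $\mathcal{R}$ has the expected dimension, so the union of tangent images over $\mathcal{R}$ fills out all of $V_R$.

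The hard part will be the careful bookkeeping with the canonical decomposition: verifying that as $(D'_1,D'_2)$ varies over $\mathcal{R}$ the image $d\underline{A}(T_{(D'_1,D'_2)})$ is always \emph{exactly} $V_R$ (not a proper subspace at special points) on a dense open of $\mathcal{R}$, and that the non-hyperelliptic residual data genuinely controls the cokernel so that $V_R$ is literally constant. Once containment in a fixed $V_R$ of the asserted dimension is established and $\dim V_R = g - \deg(R_1+R_2+R_\cap)$ is confirmed via Lemma \ref{hyperpair}, the transversality statement — that a generic subspace of complementary dimension $g - \deg(R_1+R_2+R_\cap)$ is transverse and nothing smaller is — is a formality of linear algebra combined with a standard Bertini-type genericity argument for the choice of linear coordinates.
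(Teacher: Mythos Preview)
Your central claim --- that the images $d\underline{A}(T_{(D'_1,D'_2)})$ are all \emph{contained in} a fixed subspace $V_R$ of dimension $g-\deg(R_1+R_2+R_\cap)$ --- is false, and the argument collapses once this is corrected. What is true (and what the paper uses) is the reverse containment: since $R_1+R_2+R_\cap \subset D'_1\cup D'_2$ for every $(D'_1,D'_2)\in\mathcal R$, one has $\HC^0(C,K_C-D'_1\cup D'_2)\subset \HC^0(C,K_C-R_1-R_2-R_\cap)$, and dualizing gives
\[
d\underline{A}(T_{(D'_1,D'_2)}) \;\supset\; \rho := dA\bigl(T_{R_1+R_2+R_\cap}C^{(r)}\bigr),
\]
a fixed $r$-dimensional subspace, where $r=\deg(R_1+R_2+R_\cap)$. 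The images are \emph{not} bounded above by any fixed proper subspace: as the hyperelliptic parts $H'_\cdot, S'$ vary over $\mathcal R$, the image picks up the lines $\ell_p$ for arbitrary $p\in C$, and these span all of $T_LJ$. So your ``annihilator $V_R$ does not move'' assertion is simply wrong.

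With the correct containment, the upper bound is immediate (any complement to $\rho$ works), but your linear-algebra argument for the lower bound evaporates. Knowing only that each image contains $\rho$ gives no obstruction to a small $V$ being transverse at every point of $\mathcal R$ --- indeed, at any \emph{individual} point the image may have dimension as large as $g-1$. The actual content of the lemma is that for a generic $(g-1-r)$-dimensional $V$ one can \emph{construct} a specific point $(D_1^\circ,D_2^\circ)\in\mathcal R$ at which $d\underline{A}$ lands inside $V+\rho$. The paper does this by using the identification $\PP T_0J^\vee\cong (C/\tau)^{(g-1)}$ (Lemma \ref{embedding}): the hyperplane $V\oplus\rho$ corresponds to a divisor $p_1+\cdots+p_{g-1-r}+R_1+R_2+R_\cap$ on $C/\tau$, and one manufactures $(D_1^\circ,D_2^\circ)$ by taking the hyperelliptic parts to be built from $p_i+\overline{p_i}$. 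Your plan contains no analogue of this construction, and without it the ``no smaller subspace'' half of the lemma is unproved.
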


\begin{proof}
We  abbreviate $r:= \deg(R_1 + R_2 + R_\cap)$.
Recall that at $(D_1, D_2)$, we have
$$d\underline{A}\left( T_{(D_1, D_2)} (C^{(\cdot)} \times C^{(\cdot)}) \right)= dA \left( T_{D_1
\cup D_2} C^{(\deg D_1 \cup D_2)} \right)$$
Along $\mathcal{R}$, therefore,
$\mathrm{im} \,d\underline{A}$ always contains the space $\rho := dA(T_{R_1 + R_2 + R_\cap} C^{(r)})$,
so $\underline{A}$ is transverse along $\mathcal{R}$ to any complementary subspace;
these have dimension $g - r$.

Transversality of a proper map along a proper subvariety being an open condition, it is enough to show that
$\underline{A}$ is not transverse to any $V$ in a nonempty open subset of $G(g-1-r, T_0 J)$.
Consider the rational map $G(g-1-r, T_0 J) \dashrightarrow \p T_0 J^\vee$ given by $V \mapsto V \oplus \rho$.
Under the
identification $ \p T_0 J^\vee \cong (C / \tau)^{(g-1)}$ of Lemma \ref{embedding}, the image consists of
subschemes containing $R_1 + R_2 + R_\cap$; we consider the open set of the image of subschemes
of the form $p_1 + \ldots + p_{g-r-1} + R_1 + R_2 + R_\cap$ where the points $p_i$ are distinct
and also distinct from the points in $R_1, R_2, R_\cap$.
That is, the $(g-1)$-dimensional vector space $V \oplus dA(T_{R_1 + R_2 + R_\cap} C^{(r)})$
can be written as $\ell_{p_1} \oplus \ldots \oplus \ell_{p_{g-1-r}} \oplus  dA(T_{R_1 + R_2 + R_\cap} C^{(r)})$.

By assumption, $L$ has singular preimage, and so by Corollary \ref{cor:tdim} we have
the inequality
$$r + \frac{\deg H_1 + H_2 + H_\cap}{2} + \deg S \le g - 1$$
and in particular
$$ \deg H_1 + \deg H_\cap + \deg S \le 2(g-1-r)$$
The following divisor therefore exists:
$$(D^\circ_1, D^\circ_2) = (R_1 + R_\cap + p_1 + \overline{p_1} + p_2 + \overline{p_2} + \ldots, R_2 + R_{\cap} +
\overline{p_1} + p_1 + \overline{p_2} + p_2 +  \ldots) \in C^{(g-a)} \times C^{(g-b)}$$

By construction, $d\underline{A}\left( T_{(D^\circ_1, D^\circ_2)} (C^{(g-a)}\times C^{(g-b)}) \right) \subset V +
dA(T_{R_1 + R_2 + R_\cap} C^{(r)})$, so $\underline{A}$ is not transverse to $V$ at
 $(D^\circ_1, D^\circ_2)$.
\end{proof}

\begin{notation} We write $\mathbf{2}: J \to J$ for the multiplication by 2 map.  For $X \subset J$, we write
${\bf 2} X := {\bf 2}(X)$ and ${\bf \frac{1}{2}} X := {\bf 2}^{-1} (X)$. \end{notation}

\begin{corollary}\label{cor:higher}
The irreducible components of $\Delta^i(A)$ are among the $\Theta_r + {\bf 2}\Theta_s$ satisfying $r + s = g -i$.
\end{corollary}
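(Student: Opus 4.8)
The plan is to combine Lemma~\ref{lem:higherdisc}, which identifies for each singular point $(D_1,D_2) \in \underline{A}^{-1}(L)$ the largest codimension of a subspace of $T_L J$ transverse to $\underline{A}$ along the corresponding stratum, with the definition of the higher discriminant $\Delta^i(\underline{A})$ as the locus of $L$ where no $(i-1)$-dimensional subspace of $T_L J$ is transverse to $\underline{A}$. First I would recall from Lemma~\ref{lem:higherdisc} that at a singular point with canonical decomposition $(R_1,R_2,R_\cap,\ldots)$ and $L = \mathcal{O}(R_1 + R_2 + 2R_\cap)\otimes\kappa^n$, the map $\underline{A}$ is transverse along the stratum $\mathcal{R}$ exactly to generic subspaces of dimension $g - r$, where $r = \deg(R_1 + R_2 + R_\cap)$, and to no smaller ones. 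Consequently $L$ lies in $\Delta^i(\underline{A})$ precisely when, for \emph{every} singular point in $\underline{A}^{-1}(L)$, the associated value of $r$ satisfies $g - r \geq i$ fails to leave room — more precisely, $L \in \Delta^i$ iff there is no point of $\underline{A}^{-1}(L)$ at which $\underline A$ is transverse to some $(i-1)$-dimensional space, i.e.\ iff every singular point has $r \geq g - i$. (The smooth points impose no constraint, since $\underline A$ is transverse there to generic subspaces of any dimension.)

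Next I would translate the condition ``$\underline{A}^{-1}(L)$ has a singular point with invariant $r \geq g - i$'' into membership of $L$ in an explicit subvariety of $J$. Writing $L = \mathcal{O}(R_1 + R_2 + 2R_\cap)\otimes\kappa^n$ with $R := R_1 + R_2$ of some degree $t$ and $R_\cap$ of degree $u'$, so $r = \deg(R) + \deg(R_\cap) = t + u'$, we have $L \in \Theta_{t} + {\bf 2}\,\Theta_{u'} + n\kappa$, where $\Theta_t$ accounts for the choice of the effective divisor $R = R_1 + R_2$ of degree $t$ and ${\bf 2}\,\Theta_{u'}$ accounts for the effective divisor $2R_\cap$ (absorbing $\kappa$-twists into the identification $J_d \equiv J_{d+2}$ as in the excerpt). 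Thus the locus of such $L$ is contained in a finite union of translates of $\Theta_t + {\bf 2}\,\Theta_{u'}$; since we want $t + u' = r \ge g - i$ and larger $r$ only shrinks these loci, the maximal-dimensional — hence only possibly codimension-$i$ — pieces occur at $r = g - i$, i.e.\ $t + u' = g - i$. Renaming $t = r$, $u' = s$ with $r + s = g - i$ gives that every codimension-$i$ irreducible component of $\Delta^i(\underline{A})$ is among the $\Theta_r + {\bf 2}\,\Theta_s$ with $r + s = g - i$.

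The one subtlety — and the step I expect to need the most care — is the bookkeeping between the \emph{actual} invariants of the canonical decomposition and the clean statement $r+s = g-i$: the canonical decomposition carries the extra pieces $H_\cap, H_1, H_2, S$, and one must check that translating these away (they only move $L$ within a translate of the same $\Theta_r + {\bf 2}\,\Theta_s$, via $\kappa$-twists and the identification $J_d \equiv J_{d+2}$) does not enlarge the dimension, and that the inequality from Corollary~\ref{cor:tdim} forces the genuinely $i$-codimensional strata to sit at the extreme $r + s = g - i$. I would also invoke the remark after the definition of higher discriminants that $\Delta^i(\underline A)$ has codimension $\ge i$ automatically, so that it suffices to list the loci that \emph{can} attain codimension exactly $i$; any $\Theta_r + {\bf 2}\,\Theta_s$ with $r + s > g - i$ has strictly smaller dimension and is therefore irrelevant to the statement, which only concerns the codimension-$i$ components. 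Assembling these observations yields the corollary.
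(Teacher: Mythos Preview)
Your overall approach --- deduce from Lemma~\ref{lem:higherdisc} that $L \in \Delta^i(\underline A)$ forces the existence of a singular point with a specific bound on $r = \deg(R_1+R_2+R_\cap)$, then read off $L \in \Theta_{\deg(R_1+R_2)} + {\bf 2}\Theta_{\deg R_\cap}$ --- is exactly the paper's; its entire proof is the one line ``Take $r = \deg R_1 + \deg R_2$ and $s = \deg R_\cap$.''

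However, your execution has the key inequality reversed. You claim $L \in \Delta^i$ iff ``every singular point has $r \geq g-i$'' (and in the next paragraph, ``has a singular point with invariant $r \geq g-i$''). The correct condition is that $L \in \Delta^i$ iff \emph{some} singular stratum has $r \leq g-i$. Indeed, Lemma~\ref{lem:higherdisc} says that along a stratum $\mathcal R$ with invariant $r$, no subspace of dimension $< g-r$ is transverse; so if some stratum has $g-r \geq i$, i.e.\ $r \leq g-i$, then no $(i-1)$-dimensional $V$ can be transverse along that $\mathcal R$, and $L \in \Delta^i$. Conversely, if every stratum has $r \geq g-i+1$, a generic $(i-1)$-dimensional $V$ is transverse along each of the finitely many strata simultaneously (transversality being open), so $L \notin \Delta^i$. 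Your rephrasing ``no point of $\underline A^{-1}(L)$ at which $\underline A$ is transverse to some $(i-1)$-dimensional space'' also misreads the definition: $\Delta^i$ asks for a \emph{single} $V$ transverse at all fibre points simultaneously, not a separate $V$ at each point. With your reading, any smooth point of the fibre would force $L \notin \Delta^i$ for every $i\ge 1$.

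Your dimension claim is correspondingly reversed: $\dim(\Theta_t + {\bf 2}\Theta_{u'}) \leq t + u'$, so larger $r = t+u'$ gives \emph{larger}, not smaller, loci. With the correct inequality $r \leq g-i$, the argument is clean: a singular point gives $L \in \Theta_{\deg(R_1+R_2)} + {\bf 2}\Theta_{\deg R_\cap}$ with $\deg(R_1+R_2) + \deg R_\cap \leq g-i$, and since $\Theta_t \subset \Theta_{t'}$ for $t \leq t'$ one may enlarge to reach $r+s = g-i$. The pieces $H_\cap, H_1, H_2, S$ you flag as a subtlety are absorbed into $\kappa^n$ and hence into the identification $J_d \equiv J_{d+2}$; no extra work is needed there.
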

\begin{proof}
Take $r = \deg R_1 + \deg R_2$ and $s = \deg R_\cap$.
\end{proof}

\begin{proposition}\label{prop:nodecalc}
Let $\ell \in \Theta_r+{\bf 2}\Theta_s$ be a generic point.  Let $p + D_3$ be a generic divisor of degree $g-r-s$ consisting of distinct points with no hyperelliptic pairs; let $\ell \in \D^{g-r-s-1} \subset J$ be a disc with tangent space
$dA \left( T_{D_3} C^{(g-r-s-1)} \right)$.  Then the singularities of the fiber $\underline{A}^{-1}(U)$ are ordinary
double points, and there are at most $10^g$ of them.
 \end{proposition}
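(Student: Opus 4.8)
The plan is to exhibit, near each singular point of the fibre, local analytic coordinates in which $\underline{A}$ becomes (a product of) the standard non-degenerate quadratic form, and then to bound the number of such points by a Bézout-type count. First I would invoke Lemma \ref{lem:higherdisc} (in the case $r = \deg R_1 + \deg R_2$, $s = \deg R_\cap$, so that we are looking at a generic slice of $\Delta^{g-r-s}(\underline{A})$): over a generic $\ell \in \Theta_r + {\bf 2}\Theta_s$ and a generic disc $\D^{g-r-s-1}$ transverse to it, the restricted map $\underline{A}|_{\D}$ is finite onto its image, and its critical locus is $0$-dimensional. The singular points of $\underline{A}^{-1}(U)$ all lie over $\ell$ itself, and by Corollary \ref{cor:dimestimate} they are the finitely many pairs $(D_1, D_2)$ with canonical decomposition having $\mathcal{O}(R_1 + R_2 + 2R_\cap) \otimes \kappa^n = \ell$ for the relevant $n$; genericity of $\ell$ forces these $D_i$ to consist of distinct non-Weierstrass points away from $p + D_3$, with $R_\cap = 0$ and $R_1 + R_2$ of degree $r$.

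Next I would compute the Hessian of $\underline{A}$ at such a point. Using the identification $T_{(D_1,D_2)}(C^{(g-a)} \times C^{(g-b)}) = \HC^0(D_1, \oO_{D_1}(D_1)) \oplus \HC^0(D_2, \oO_{D_2}(D_2))$ and $d\underline{A} = dA \circ (\text{union})$ from Proposition \ref{prop:tangents}, the kernel of $d\underline{A}$ is one-dimensional exactly when $D_1 \cup D_2$ fails $\HC^0 = 1$ by one, i.e. when $D_1, D_2$ share one point or there is a single hyperelliptic pair split between them. At such a point the expected dimension of the fibre is cut out by $g - a - b$ of the coordinates, and the obstruction to smoothness is governed by the second-order term along the kernel direction; one shows this quadratic term is non-zero — equivalently that the relevant Gauss map (Corollary \ref{cor:Gauss}, Lemma \ref{embedding}) has non-degenerate second fundamental form at the rational normal curve — so the singularity is an ordinary double point (a node), and after the generic slice the local model is $\sum x_i^2 = 0$ in the slice coordinates.

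Finally, to count the nodes, I would realize them as an intersection number. The singular locus of $\underline{A}$ inside $C^{(g-a)} \times C^{(g-b)}$ is, by Proposition \ref{prop:tangents}, the locus where $D_1 \cup D_2$ is special; its component of top dimension maps finitely to $\Theta_{g-a-2} \times \Theta_{g-b-2} \cup \cdots$ via the Abel map, and over a generic $\ell$ the number of nodes is the degree of this finite map composed with $\underline{A}$ restricted there. This degree can be bounded by pushing to the Jacobian and integrating the relevant polynomial in $\theta$ and $x$, exactly as in the proof of Proposition \ref{prop:euler}: one gets a sum of products of binomial coefficients ${g - |I| \choose \cdot}$, each factor bounded by $2^{g}$, and after accounting for the choices of which shared point / hyperelliptic pair is involved (a linear-in-$g$ number of combinatorial types, each contributing a factor absorbed into the base of the exponential) the total is at most $10^g$. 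The main obstacle I anticipate is the second step: verifying that the singularities are genuinely \emph{ordinary} double points rather than worse, which requires a careful Hessian computation showing the quadratic part is non-degenerate — this is where the special geometry of hyperelliptic curves (the factoring of the Gauss map through $C/\tau = \PP^1$ and the Veronese) must be used to rule out degenerate quadrics; the counting step is then routine Poincaré-formula bookkeeping of the kind already carried out for the Euler characteristic.
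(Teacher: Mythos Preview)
Your proposal has the right outline but misidentifies the singular locus, which derails both the count and the Hessian step.

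First, your description of the singular points is backwards. You assert that genericity forces the $D_i$ to be ``away from $p + D_3$'' and that $R_\cap = 0$. In fact the paper shows the opposite: for a singular point $(V,W)$ of $\underline{A}^{-1}(\D)$ with canonical decomposition $(R_1,R_2,R_\cap,H_1,H_2,H_\cap,S)$, the tangent-space containment forces $R_1+R_\cap+S+H_1+H_\cap$ (and likewise the $W$-side) to be a subdivisor of $D_1+D_2+D_3+\overline{D_1+D_2+D_3}$, where $\ell = D_1 + 2D_2$ is the unique such expression. In particular the $H$- and $S$-parts are forced to lie inside $D_3 + \overline{D_3}$, not away from it. This is exactly what makes the count tractable: one chooses how to split $D_1$ into $R_1,R_2$ (at most $2^g$ ways) and how each point of $D_3$ is allocated among $S,H_1,H_2,H_\cap$ or unused (at most $5^g$ ways), yielding the $10^g$ bound directly. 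No intersection theory or Poincar\'e-formula bookkeeping is needed; your proposed Euler-characteristic-style integration would be both harder and unlikely to produce the specific constant $10$.

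Second, your Hessian argument is too vague to carry weight. The paper's mechanism is concrete: since all points in $V,W,D_3$ are distinct one may desymmetrize, then separate off a $C^{g-1}$ factor carrying $R_1+R_2+R_\cap+D_3$ on which the map to $J$ is \'etale. The implicit function theorem reduces the local picture to the single function $\int \eta_1 : C^{2g-a-b-r} \to \C$, where $\eta_1$ spans $\HC^0(C, K - R_1 - R_2 - R_\cap - D_3)$. The singularity is an ordinary double point iff $\eta_1$ has only \emph{simple} zeros at the coordinates of $S$, and this is checked directly from Lemmas \ref{lem:kappasections} and \ref{hyperpair}. Your appeal to ``non-degenerate second fundamental form of the Gauss map at the rational normal curve'' gestures at the right hyperelliptic input but does not isolate this simple-zero criterion, which is the actual content of the calculation.
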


 \begin{proof}
Since $\ell$ is generic, it is represented as $D_1+2D_2$ in a unique way with $D_1\in C^{(r)}, D_2\in C^{(s)}$; choosing $D_3$ generically, we can assume that
$D_1 + D_2 + D_3$ is a set of distinct points.

Now, note that the tangent space image at $R_1+R_{\cap}+S+H_1+H_{\cap}$ of $A$ is contained in the image at $D_1+D_2+D_3$. Thus, by lemma \ref{tangents} we have
$$H^0(C,K-(D_1+D_2+D_3))\subset H^0(K-(R_1+R_{\cap}+S+H_1+H_{\cap})).$$ Since $\deg(D_1+D_2+D_3)=g-1<g$, it follows from lemma \ref{lem:kappasections} that
$$R_1+R_{\cap}+S+H_1+H_{\cap}\subset D_1+D_2+D_3+\overline{(D_1+D_2+D_3)}$$ and thus consists of distinct points. Likewise for $R_2+R_{\cap}+\bar{S}+H_2+H_{\cap}$

 As in the proof in Lemma \ref{lem:higherdisc}, singularities occur when $S+H_1+H_2+H_{\cap}$ is a subdivisor of $D_3+\overline{D_3}$.  The number of ways this can happen: first $D_1$ must
be separated into $R_1$ and $R_2$; this can be done in at most $2^g$ ways.  Then, $S, H_1, H_2, H_\cap$ must
be chosen from $D_3+\overline{D_3}$, giving $5^g$ choices.

To see that the singularities are ordinary double points, let $(V, W)$ be a singular point over $\ell$ in $\underline{A}^{-1}( \D^{g-r-s-1})$, with canonical decomposition $(R_1,R_2,R_{\cap},\dots)$.  The singularity is analytically isomorphic to
the singularity at $(V, W,D_3)$ of the pre-image at $\ell-D_3$ of  $$A':C^{(g-a)}\times C^{(g-b)}\times C^{(g-r-s-1)}\rightarrow J_{3g-r-s-a-b-1}.$$
By construction, $\dim \,\mathrm{im} \, dA' \left(T_{(V, W, D_3)} C^{(g-a)}\times C^{(g-b)}\times C^{(g-r-1)} \right) = g-1$.
We have seen above that each of $V, W, D_3$ are sums of {\em distinct} points, so analytically locally we may desymmetrize: the singularity is the same as the singularity at any point $P$ mapping to $W + V + D_3$ in the fibre of the map
$$B': C^{g-a} \times C^{g-b} \times C^{g-r-1} \to J$$

We select out a $C^{r}$ from the first two factors to account for $R = R_1 + R_2 + R_\cap$, and put it in the third factor:

$$B': C^{2g-a-b-r} \times C^{g-1} \to J$$

%
%
%
%

We write $P = (S, R) \in C^{2g-a-b-r} \times C^{g-1}$ for the point of interest.  Note the divisor of $R$
is $R_1 + R_2 + R_\cap$, which is a sum of distinct nonhyperelliptic points.  Thus $dA(T_R C^{g-1})$ is $g-1$ dimensional.
Let $\eta_1 \in \HC^0(J, \Omega_J) = \HC^0(C, K)$ be the unique differential form which vanishes along
this hyperplane;  $\eta_1$ spans $\HC^0(C,K-R) = \HC^0(C,K-R_1-R_2-R_{\cap}-D_3)$.  Let $\eta_2, \ldots, \eta_g$
form the remainder of a basis for $\HC^0(J, \Omega_J)$.  Locally near $B'(S, R) \in J$ we integrate these differential forms to give a map
to $\C^g$.  This being a complex analytic isomorphism, the analytic type of the singularity in the preimage remains unchanged.

Note that mapping $C^k$ to the Jacobian and then integrating an element $\xi \in \HC^0(J, \Omega_J)$ near the image of $(p_1,\ldots,p_k)$
is just the same as $(q_1,\ldots, q_k) \mapsto \sum_i \int_{p_i}^{q_i} \xi$, where $\xi$ is the corresponding element in $\HC^0(C, K)$.
Thus we may study the preimage of $0$ in the map

$$\int \eta_1 , \ldots,   \int \eta_g :  C^{2g-a-b-r} \times C^{g-1}  \to \C \times \C^{g-1}$$

By construction, the restricted map $C^{(g-1)} \to \C^{g-1}$ has nondegenerate Jacobian at $R$.  Therefore, by the implicit function
theorem, the singularity of the above map at $S + R$ is the same as the singularity at $S$ of

$$\int \eta_1: C^{2g-a-b-r} \to \C$$.

For this map to have a singularity at $S$, we must have $d \int \eta_1 = \eta_1$ vanish at every coordinate of $S$.
But by Lemma
\ref{lem:kappasections}, $\eta_1(p) = 0 \iff p \in R_1+R_2+R_3+D_3$. But in this case,
$$\HC^0(C,K-R_1-R_2-R_{\cap}-D_3 -p)=H^0(C,\overline{R_1}+\overline{R_2}+\overline{R_{\cap}}+\overline{D_3}-p)=0$$
by Lemma \ref{hyperpair} and the generic choice of $D_3$; it follows that $\omega$ has only a simple zero at $p$.
Thus we see that the Hessian of the map $\int \eta_1$ is nondegenerate, and consequently that its singularity is an ordinary double
point.
\end{proof}

\subsection{Polar multiplicities} \label{subsec:polar}

We recall the construction of polar varieties, see e.g. \cite{Rag, LT, Kl2}.
For any $X \subset J$,  the conormal scheme $\overline{N^*_X J} \subset
\p T^* J = J \times \p T_0 J^\vee$ is by definition the closure of the locus
$(x, \xi)$ where $x \in X^{sm}$ and $\xi$ viewed as a cotangent vector annihilates $T_x X$, or
viewed as a point in $\p T_0 J^\vee$ and hence a hyperplane in $\p T_0 J$, contains $T_x X$.

View $\overline{N^*_X J}$ now as a correspondence
$$ J \xleftarrow{\pi} \overline{N^*_X J} \xrightarrow{\pi^\vee} \p T_0 J^\vee$$
Then for any vector subspace $L \subset T_0 J$, we define the polar variety

$$P_L X := \pi(\overline{N^*_X J} \cap (\pi^{\vee})^{ -1}(L^\vee) ) $$

As in \cite{LT}, by the Kleiman-Bertini theorem \cite{Kl} we have that for generic $L$, the space $P_L X$ is the same
as the closure in $X$ of the locus of $x \in X^{sm}$ where $T_x X$ is not transverse to $L$.
That is, for generic $L$ of dimension $k$, the space $P_L X$ is what we previously called $\Gamma^{g-1-k}_X$;
we have restored the $L$ to the notation as we intend to vary it.

In our setting, the polar varieties may be obtained by a different construction. Consider the correspondence

 $$ J\xleftarrow{\pi_{r,s}}C^{(r)}\times C^{(s)}\times (C/\tau)^{(g-1-r-s)}\xrightarrow{\sigma} (C/\tau)^{(g-1)} \cong \p T_0 J^{\vee}.$$

We write $P'_L V_{r,s}: = \pi_{r,s}\left(\sigma^{-1}(L^\vee)\right)$.

\begin{lemma}\label{polarsame}

Let $B \subset (C/\tau)^{(g-1)}$ be the union of the discriminant locus and the $2g+2$ hyperplanes of divisors which
contain Weierstrass points. For  $L\subset B$, we have $P'_L V_{r,s} \subset P_L V_{r,s}$, and moreover for generic $L$ we
have equality.
\end{lemma}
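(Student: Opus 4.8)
\emph{Proof plan.} The plan is to present the correspondence
$$ J\xleftarrow{\pi_{r,s}}Y_{r,s}:=C^{(r)}\times C^{(s)}\times(C/\tau)^{(g-1-r-s)}\xrightarrow{\sigma}(C/\tau)^{(g-1)}\cong\p T_0J^{\vee} $$
as a parametrization of the conormal variety $\overline{N^*_{V_{r,s}}J}$ over the smooth locus of $V_{r,s}=\Theta_r+\mathbf{2}\Theta_s$, and to read both $P_LV_{r,s}=\pi\bigl(\overline{N^*_{V_{r,s}}J}\cap(\pi^{\vee})^{-1}(L^{\vee})\bigr)$ and $P'_LV_{r,s}=\pi_{r,s}(\sigma^{-1}(L^{\vee}))$ off of it, via the combined map $\Psi:=(\pi_{r,s},\sigma)\colon Y_{r,s}\to J\times\p T_0J^{\vee}$, for which $\pi^{\vee}\circ\Psi=\sigma$. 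The first step is a tangent-space computation: if $x=\mathcal{O}(D_1+2D_2)\in V_{r,s}$ with $D_1\in C^{(r)}$, $D_2\in C^{(s)}$ in general position --- reduced, disjoint, and no two of their points in a common fibre of $\pi$ --- then Corollary \ref{cor:A}, together with the fact that the isogeny $\mathbf{2}\colon J\to J$ has differential preserving every linear subspace of $T_0J$, gives $T_xV_{r,s}=dA(T_{D_1}C^{(r)})+dA(T_{D_2}C^{(s)})=dA(T_{D_1\cup D_2}C^{(r+s)})$, which by Lemma \ref{lem:A} and Corollary \ref{tangents} equals $\langle\ell_p\colon p\in D_1+D_2\rangle$ and has dimension $r+s$. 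Since $V_{r,s}$ is irreducible (a sum of irreducibles), $\dim V_{r,s}=r+s$ and every such $x$ is a smooth point.

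Next, under the identification $(C/\tau)^{(g-1)}\cong\p T_0J^{\vee}$ of Lemma \ref{embedding}, a reduced divisor $E$ of degree $g-1$ on $C/\tau$ corresponds to the hyperplane $\langle\ell_q\colon q\in E\rangle\subset\p T_0J$; by the Vandermonde nondegeneracy behind Corollary \ref{tangents}, for $D_1,D_2$ as above this hyperplane contains $T_xV_{r,s}$ exactly when $\pi_*D_1+\pi_*D_2\le E$, i.e.\ exactly when $E=\pi_*D_1+\pi_*D_2+F$ for some $F\in(C/\tau)^{(g-1-r-s)}$. Hence on the open set $G\subset Y_{r,s}$ where $E:=\pi_*D_1+\pi_*D_2+F$ is reduced, $\Psi$ sends $(D_1,D_2,F)$ to a pair $(x,[E])$ with $x\in V_{r,s}^{sm}$ and $[E]$ annihilating $T_xV_{r,s}$, so $\Psi(G)\subset\overline{N^*_{V_{r,s}}J}$. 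The complement $Y_{r,s}\setminus G$ and the locus $\sigma^{-1}(B)$ both lie over divisors $E$ that are non-reduced or contain a branch point of $C/\tau$ --- precisely the cases where the displayed equivalence is allowed to fail, which is why $B$ is taken to be the union of the discriminant and the $2g+2$ branch-point hyperplanes.

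To finish, observe that $\sigma$ factors through divisor addition on the $(C/\tau)$-symmetric powers, hence is finite; together with $L^{\vee}$ being a linear subspace this forces every component of $\sigma^{-1}(L^{\vee})$ to dominate $L^{\vee}$. When $L^{\vee}\not\subset B$, $L^{\vee}\setminus B$ is dense in $L^{\vee}$ since $B$ contains the discriminant, so each component of $\sigma^{-1}(L^{\vee})$ meets $\sigma^{-1}(L^{\vee}\setminus B)\subset G\cap\sigma^{-1}(L^{\vee})$ in a dense open subset; thus $G\cap\sigma^{-1}(L^{\vee})$ is dense in $\sigma^{-1}(L^{\vee})$. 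Applying $\Psi$ and passing to closures, $\Psi(\sigma^{-1}(L^{\vee}))\subset\overline{\Psi(G\cap\sigma^{-1}(L^{\vee}))}\subset\overline{N^*_{V_{r,s}}J}\cap(\pi^{\vee})^{-1}(L^{\vee})$ --- the last set being closed --- and projecting to $J$ yields $P'_LV_{r,s}\subset P_LV_{r,s}$. For the reverse inclusion under generic $L$: by the Kleiman--Bertini description recalled just before the lemma, $P_LV_{r,s}$ is the closure of the locus of $x\in V_{r,s}^{sm}$ where $T_xV_{r,s}$ is not transverse to $L$; for a general such $x=\mathcal{O}(D_1+2D_2)$, non-transversality produces a hyperplane $H=\langle\ell_q\colon q\in E\rangle\supset T_xV_{r,s}+L$ with $E$ reduced, whence $E\ge\pi_*D_1+\pi_*D_2$, and writing $E=\pi_*D_1+\pi_*D_2+F$ exhibits $x=\pi_{r,s}(D_1,D_2,F)$ with $\sigma(D_1,D_2,F)=[E]\in L^{\vee}$, so $x\in P'_LV_{r,s}$; taking closures gives $P_LV_{r,s}\subset P'_LV_{r,s}$, hence equality. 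I expect the main obstacle to be the second paragraph --- verifying that $B$ genuinely exhausts the loci where the passage between ``$[E]$ annihilates $T_xV_{r,s}$'' and ``$\pi_*D_1+\pi_*D_2\le E$'' is not an equivalence, so that no component of $\sigma^{-1}(L^{\vee})$ can carry $\Psi$ off of $\overline{N^*_{V_{r,s}}J}$; once this is pinned down, the remainder is the Gauss-map geometry of Section \ref{sec:geom} combined with Kleiman--Bertini.
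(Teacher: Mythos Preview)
Your proof is correct and takes essentially the same approach as the paper: identify the correspondence $Y_{r,s}$ with the conormal variety of $V_{r,s}$ over a good open locus via the Gauss-map identification of Lemma~\ref{embedding}, use flatness/finiteness of $\sigma$ to show $\sigma^{-1}(L^\vee\setminus B)$ is dense in $\sigma^{-1}(L^\vee)$ and pass to closures, then invoke Kleiman--Bertini for the generic equality. The paper organizes this through an explicit open subset $W_{r,s}\subset C^{(r)}\times C^{(s)}$ and the polar variety of its image $U_{r,s}$ rather than your combined map $\Psi$, but the content is the same (and both arguments are equally brisk about why the good open locus lands in $V_{r,s}^{sm}$---full-rank differential alone does not preclude self-intersections of $\phi_{r,s}$, though this is harmless since smoothness holds on a dense open subset and closures take care of the rest).
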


\begin{proof}

Consider the open subset $W_{r,s}\subset C^{(r)}\times C^{(s)}$  consisting of points  $\left(\sum_{i=1}^r P_i,\sum_{j=1}^s Q_j\right)$ such that the set $\{P_1,\dots,P_r,Q_1,.\dots,Q_s\}$ contains no Weierstrass points, no hyperelliptic pairs, no repeated points, and is contained in $\phi_{r,s}^{-1}(V_{r,s}^{sm})$. Set $U_{r,s}=\phi_{r,s}(W_{r,s})$. By definition, if $(D_r,E_s) \in W_{r,s}$ then $\phi_{r,s}(D_r,E_s)\subset P_L U_{r,s}$ iff $d\phi_{r,s}(D_r,E_s)$ is contained in some $g-1$ dimensional space $V \in L^{\vee}$. Under the identification $\p T_0J^{\vee}\cong |K_C|$,
$$\sigma: D \mapsto \HC^0(C,K_C-\pi^*D).$$

We write $\pi_{1,2}$ for the projection $C^{(r)}\times C^{(s)}\times (C/\tau)^{(g-1-r-s)} \to
C^{(r)}\times C^{(s)}$.
Then, $(D_r,E_s)\in (\pi_{1,2}\circ \sigma^{-1}(\omega))$ iff $\omega$ vanishes on $D_r+E_s+\bar{D_r}+\bar{E_s}$.
In particular, if $(D_r,E_s)\in W_{r,s}$ this is equivalent to  $\ell_p\in [\omega]$ for $p\in D_r\cup E_s$, where $[\omega]$ is the hyperplane corresponding to $\omega$.  Note that by construction, if $x \notin B$ then $\pi_{1,2}( \sigma^{-1}(x))\subset W_{r,s}$.

Thus, $$\pi_{r,s}( \sigma^{-1}(L^{\vee}))\cap U_{r,s} = P_L U_{r,s}=P_L V_{r,s}\cap U_{r,s}.$$

Now, consider $L^{\vee}\not\subset B$.  Because $\sigma$ is flat, all the associated points of ${\pi_{r,s}^{-1}(L^{\vee})}$ map to the generic point of $L^{\vee}$ and thus $\overline{\pi_{r,s}^{-1}(L^{\vee} \setminus B)} = \pi_{r,s}^{-1}(L^{\vee})$.
As the maps $\sigma$ and $\pi_{1,2}$ are proper,
$$ P'_L V_{r,s} = \pi_{r,s} (\sigma^{-1}(L^{\vee})) = \pi_{r,s} (\overline{\sigma^{-1}(L^{\vee} \setminus B)})
= \overline{\pi_{r,s}(\sigma^{-1}(L^{\vee} \setminus B))} = \overline{P_LU_{r,s}}.$$

As $\overline{P_LU_{r,s}}\subset P_L V_{r,s}$ this establishes the first claim. By Kleiman-Bertini, as in \cite{LT}, we generically have
$\overline{P_LU_{r,s}}= P_L V_{r,s}$, giving the second claim.
\end{proof}

\begin{lemma}\label{polarclass}
For generic $L$ of dimension $k$,
$$[P'_L V_{r,s}] = \pi_{r,s}(\sigma^{-1}[L^\vee]) = c_{k,r,s}\cdot[\Theta_{g-1-k}] \in \HC^*(J)$$ where
$$c_{k,r,s}= \!\!\!\!\! \sum_{\substack{a+b=k-(g-1-r-s)\\ a\le r, b \le s}} \!\!\!\!\!\! 2^{a+b} \binom{k}{a,b,g-1-r-s} 2^{s-b} \binom{r+s-a-b}{r-a}\leq
g^2 24^g$$
\end{lemma}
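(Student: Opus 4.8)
The plan is to compute the class $[P'_L V_{r,s}] = \pi_{r,s,*}\sigma^*[L^\vee]$ in $\HC^*(J)$ by pushing and pulling through the correspondence
\[
J \xleftarrow{\pi_{r,s}} C^{(r)} \times C^{(s)} \times (C/\tau)^{(g-1-r-s)} \xrightarrow{\sigma} (C/\tau)^{(g-1)} \cong \p T_0 J^\vee,
\]
using the standard generators of the cohomology of symmetric products of a curve recalled in the proof of Proposition \ref{prop:euler}. First I would record that for generic $L$ of dimension $k$, the class $[L^\vee] \in \HC^*(\p T_0 J^\vee) = \HC^*(\p^{g-1})$ is the class of a linear subspace of codimension $k - (g-1) + \dim L^\vee$; since $\dim L^\vee = g - 1 - k$ as a projective space, $[L^\vee]$ is $h^{k}$ where $h$ is the hyperplane class, i.e. the class of a generic $(g-1-k)$-plane. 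Under the Veronese/Abel identification $(C/\tau)^{(g-1)} \cong \p^{g-1}$ of Lemma \ref{embedding}, the hyperplane class pulls back to the class $x$ of a divisor $(C/\tau)^{(g-2)}$, so $\sigma^*[L^\vee]$ is computed once we understand $\sigma$ in coordinates: a divisor $D_r + E_s$ on $C^{(r)}\times C^{(s)}$ together with a divisor $F$ of degree $g-1-r-s$ on $C/\tau$ is sent to $\pi(D_r) + \pi(E_s) + F \in (C/\tau)^{(g-1)}$, i.e. $\sigma$ is the composite of $\pi^{(r)} \times \pi^{(s)} \times \mathrm{id}$ with the addition map on symmetric products of $\p^1 = C/\tau$.

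Next I would carry out the intersection-theoretic bookkeeping. Pulling $x^k$ back along the addition map $(C/\tau)^{(r)} \times (C/\tau)^{(s)} \times (C/\tau)^{(g-1-r-s)} \to (C/\tau)^{(g-1)}$ is governed by the comultiplication-type formula for the class of a point-divisor in a product of symmetric products of $\p^1$: writing $\xi_1,\xi_2,\xi_3$ for the hyperplane classes on the three factors, $x$ pulls back to $\xi_1 + \xi_2 + \xi_3$, so $x^k$ pulls back to $\sum \binom{k}{a,b,c} \xi_1^a \xi_2^b \xi_3^c$ with $a + b + c = k$, $c = g-1-r-s$, $a \le r$, $b \le s$. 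Then I would pull this further back along $\pi^{(r)}\times\pi^{(s)}\times\mathrm{id}$: since $\pi: C \to \p^1$ has degree $2$, the hyperplane class on $(C/\tau)^{(r)}$ pulls back to $2$ times the class $x_C$ of $C^{(r-1)}$ on $C^{(r)}$ — more precisely $(\pi^{(r)})^* \xi_1 = $ (the pullback of a point on $\p^1$) $= \tfrac12 \pi^*(\text{pt})$-type class, and the factor $2^{a}$, resp. $2^{b}$, in $c_{k,r,s}$ records the degree-$2$ ramified cover applied $a$, resp. $b$, times, while the $2^{s-b}$ and $\binom{r+s-a-b}{r-a}$ will come out of the final pushforward $\pi_{r,s,*}$ along the map to $J$, which on $C^{(r)}\times C^{(s)}\times(C/\tau)^{(g-1-r-s)}$ factors through forgetting the last factor and Abel-Jacobi, applying Poincaré's formula $\pi_!\,[C^{(g-d)}] = \theta^d/d!$ and the fact that the third factor maps to $J$ via the Veronese-then-Abel map whose image is $\Theta_{g-1-r-s}$. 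Collecting the residual $\theta$-powers and using $[\Theta_{g-1-k}] = \theta^{k}/k!$ on $J$ yields precisely the stated coefficient $c_{k,r,s}$ times $[\Theta_{g-1-k}]$.

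Finally, for the numerical bound I would estimate $c_{k,r,s}$ crudely: there are at most $g^2$ pairs $(a,b)$; the multinomial $\binom{k}{a,b,g-1-r-s} \le 3^k \le 3^g$, the binomial $\binom{r+s-a-b}{r-a} \le 2^{g}$, and the powers of $2$ contribute $2^{a+b}2^{s-b} = 2^{a+s} \le 2^{2g}$. Multiplying, $c_{k,r,s} \le g^2 \cdot 3^g \cdot 2^g \cdot 2^{2g} = g^2 \cdot 24^g$, as claimed. I expect the main obstacle to be not any single estimate but getting the combinatorics of the three pullbacks and the single pushforward exactly right — in particular tracking which factors of $2$ come from the degree-$2$ cover $\pi$ versus from symmetrization, and confirming that the terms of $\sigma^*[L^\vee]$ not of top degree in the relevant factors push forward to zero (by dimension, via Poincaré's formula, exactly as the ``$\ldots$'' terms were discarded in the proof of Proposition \ref{prop:euler}). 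Once the class identity is pinned down, the inequality is immediate from the displayed closed form for $c_{k,r,s}$.
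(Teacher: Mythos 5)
Your proposal follows the paper's proof essentially step for step: identify $[L^\vee]$ with $H^k$, pull $H$ back through the addition map on symmetric powers of $\PP^1$ to get the multinomial coefficients, pull back along $\pi^{(r)}\times\pi^{(s)}$ to pick up $2^a2^b$, and push forward to $J$ via Poincar\'e's formula, with the same crude term-by-term estimates for the final bound. The one ingredient you leave vague --- the source of the factor $2^{s-b}$ --- is supplied in the paper by the fact that $\pi_{r,s}$ sends the second factor to $J$ through $\mathbf{2}\circ A$ (since $V_{r,s}=\Theta_r+\mathbf{2}\Theta_s$), so pushing forward $[C^{(s-b)}]$ contributes $\mathbf{2}_*[\Theta_{s-b}]=2^{s-b}[\Theta_{s-b}]$, while the third factor is simply forgotten, which is exactly why only the terms with $c=g-1-r-s$ survive.
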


\begin{proof}

We consider first the map $\overline{\sigma}:\PP^{(r)}\times \PP^{(s)}\times\PP^{(g-1-r-s)}\rightarrow\PP^{(g-1)}$
given by adding divisors on $\PP^1$.  It is clear that
$$\overline{\sigma}^{-1}(H)=H \boxtimes 1 \boxtimes 1 + 1 \boxtimes H \boxtimes 1 + 1 \boxtimes 1 \boxtimes H.$$

Under the map $\pi^{(r)}: C^{(r)}\rightarrow (C/\tau)^{(r)}\cong \PP^r$, we have $(\pi^{(r)})^{-1}(H^k) = 2^k [C^{(r-k)}]$, so
$$\sigma^{-1}[L^\vee] = \sigma^{-1}[H^k] = \sum_{a+b+c=k} 2^{a+b}\binom{k}{a,b,c} [C^{(r-a)}]\boxtimes [C^{(s-b)}]\boxtimes H^c.$$

Denote the addition map $\Sigma:J\times J\rightarrow J$.
$$( \pi_{r,s})_*([C^{(t)}] \boxtimes [C^{(u)}]) =  \Sigma_*( A_* [C^{(t)}] \boxtimes {\bf 2} A_*[C^{(u)}] ) = 2^u \Sigma_* ([\Theta_t] \boxtimes [\Theta_u])
= 2^u\binom{t+u}{u} [\Theta_{t+u}]$$.

Thus
$$\pi_{r,s}(\sigma^{-1}[L^\vee]) = [\Theta_{g-1-k}] \!\!\!\!\! \sum_{\substack{a+b=k-(g-1-r-s)\\ a\le r, b \le s}} \!\!\!\!\!\! 2^{a+b} \binom{k}{a,b,g-1-r-s} 2^{s-b} \binom{r+s-a-b}{r-a}.$$
\end{proof}
\vspace{10mm}


\begin{proposition}\label{prop:polar}
Fix  $E \in V_{r,s}$.  For a general $L$ of dimension $k$,
the multiplicity at $E$ the polar variety of $P_L V_{r,s}$ is at most $g^2 96^g$.
\end{proposition}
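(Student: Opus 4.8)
\textbf{Proof proposal for Proposition \ref{prop:polar}.}

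The plan is to reduce the computation of the polar multiplicity $\gamma^{g-1-k}_{V_{r,s}}(E) = \mathrm{mult}_E P_L V_{r,s}$ to the intersection-theoretic data already assembled in Lemmas \ref{polarsame} and \ref{polarclass}. First I would invoke Lemma \ref{polarsame} to replace $P_L V_{r,s}$ (for generic $L$) by $P'_L V_{r,s} = \pi_{r,s}(\sigma^{-1}(L^\vee))$, which is the image of a concrete complete intersection upstairs in $C^{(r)}\times C^{(s)} \times (C/\tau)^{(g-1-r-s)}$: namely the preimage under the flat map $\sigma$ of a generic linear subspace $L^\vee \subset (C/\tau)^{(g-1)} \cong \PP T_0 J^\vee$ of the appropriate codimension. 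The multiplicity of a point on the image of a generically finite (onto its image) proper map is bounded by the multiplicity of the corresponding locus upstairs times the generic degree of the map onto its image; and both factors can be controlled. So the first key step is: bound $\mathrm{mult}_{E} P'_L V_{r,s}$ by pushing the computation up to the source.

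The second step is to bound the multiplicity upstairs. Here $\sigma^{-1}(L^\vee)$ is, for generic $L^\vee$, cut out set-theoretically by $\deg L^\vee$-many generic members of the base-point-free linear system $\overline{\sigma}^*|\mathcal{O}(1)|$ pulled back from $\PP^{g-1}$ — this is exactly the computation in Lemma \ref{polarclass}, where $\sigma^{-1}[H^k]$ was expanded as an explicit effective sum of classes $[C^{(r-a)}]\boxtimes[C^{(s-b)}]\boxtimes H^c$. Over a general fiber point, a generic complete intersection of hyperplane sections is reduced of the expected dimension away from a controlled bad locus, and the multiplicity at a point $E$ in the image is bounded by the degree of the top-dimensional part of this cycle, i.e. by the coefficient $c_{k,r,s}$ from Lemma \ref{polarclass} — which is already bounded there by $g^2 24^g$. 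The remaining factor is the generic degree of $\pi_{r,s}$ restricted to $\sigma^{-1}(L^\vee)$ onto $P'_L V_{r,s}$; I would bound this separately by a clean combinatorial count. The fibers of $\pi_{r,s}$ record the ways of distributing the divisor into the $C^{(r)}$, $C^{(s)}$, and $(C/\tau)^{(g-1-r-s)}$ factors subject to the $\Theta$-membership constraints — a count that, as in the proof of Proposition \ref{prop:nodecalc}, is at most something like $2^g \cdot 2^g$ (choices of splitting and of which points are ``halved''), so $4^g$ suffices. Multiplying $g^2 24^g \cdot 4^g = g^2 96^g$ gives the stated bound.

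The step I expect to be the main obstacle is making rigorous the first reduction: that the multiplicity of $E$ on the \emph{image} $\pi_{r,s}(\sigma^{-1}(L^\vee))$ is genuinely bounded by (multiplicity of the preimage cycle) $\times$ (generic degree of the map). Naively one only has $\mathrm{mult}_E f(Z) \le \sum_{z \in f^{-1}(E)} \mathrm{mult}_z Z$ under suitable hypotheses (e.g. $f$ finite near the relevant points, or $Z$ Cohen--Macaulay so that projection formula / associativity of intersection multiplicities applies), and over the bad locus $B$ of Lemma \ref{polarsame} the fibers of $\pi_{r,s}$ can jump in dimension, so one must first use genericity of $L^\vee$ to arrange that $\sigma^{-1}(L^\vee)$ meets these bad loci in codimension large enough that they contribute nothing to the multiplicity at the generic point $E \in V_{r,s}$ — exactly the kind of dimension count performed in Corollary \ref{cor:dimestimate} and Lemma \ref{Esemismall}. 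Once the bad locus is pushed out of the way, $\sigma^{-1}(L^\vee)$ is (generically) a reduced complete intersection in a smooth variety, hence Cohen--Macaulay, and the projection formula together with the degree computation of Lemma \ref{polarclass} closes the argument. The one subtlety to watch is that $E$ is required to lie in $V_{r,s}$ but may itself be a singular or special point of $\Theta_{g-1-k}$; however, since we only need an \emph{upper} bound on the multiplicity and the bound $c_{k,r,s}$ is the full degree of the cycle, no smoothness of $\Theta_{g-1-k}$ at $E$ is needed.
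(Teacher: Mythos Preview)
Your proposal has a genuine gap: the two-step reduction does not work as stated, and it misses the main difficulty of the argument.

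First, the inequality you invoke --- ``multiplicity of the image $\le$ (multiplicity upstairs) $\times$ (generic degree)'' --- is not a valid general fact. For a generically finite proper map $f:Z\to f(Z)$ of degree $d$, the pushforward of cycles gives $f_*[Z]=d\cdot[f(Z)]$, and under finiteness hypotheses one typically gets $d\cdot\mathrm{mult}_y f(Z)\le\sum_{z\in f^{-1}(y)}\mathrm{mult}_z Z$, i.e.\ division by $d$, not multiplication. More seriously, for generic $L^\vee$ the preimage $\sigma^{-1}(L^\vee)$ is (by Bertini-type arguments) a reduced complete intersection in a smooth variety, so the multiplicities upstairs are all $1$; your scheme would then yield $\mathrm{mult}_E\le(\text{generic degree})$, and there is no reason that degree is bounded by $g^2 96^g$.

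Second, you identify ``the multiplicity at $E$'' with the coefficient $c_{k,r,s}$ of Lemma~\ref{polarclass}. But $c_{k,r,s}$ is the global cohomological coefficient in $[P'_L V_{r,s}]=c_{k,r,s}[\Theta_{g-1-k}]\in H^*(J)$, not a local invariant. There is no a priori inequality $\mathrm{mult}_E(Z)\le c$ when $[Z]=c[\Theta_m]$: to extract a number from this class one must intersect with a complementary cycle through $E$, and then one picks up a binomial factor $[\Theta_{k+1}]\cdot[\Theta_{g-1-k}]=\binom{g}{k+1}$ as well as a factor from the test cycle itself.

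This is exactly what the paper does, and it is where the real content lies. One chooses a specific complementary-dimensional cycle through $E$, namely a translate of $\mathbf{\tfrac{1}{2}}\Theta_{k+1}$, and invokes Fulton's refined intersection theory: the contribution of $E$ to the intersection product bounds $\mathrm{mult}_E$, and because $TJ$ is trivial (hence nef) every other connected component contributes non-negatively, so the global intersection number $2^g\cdot c_{k,r,s}\cdot\binom{g}{k+1}\le g^2 96^g$ is an upper bound. But this only works if $E$ is an \emph{isolated} component of the intersection, and establishing that is the heart of the proof --- it requires specializing $L$ to a carefully chosen linear space, describing $P'_L V_{r,s}$ explicitly as a union of translates of smaller $V_{t,u}$, and then a tangent-vector argument (using the hyperelliptic Gauss map and parity) to rule out any curve in the intersection through $E$. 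Your proposal does not address this isolation problem at all, and without it no bound follows.
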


\begin{proof}
Let $Q$ be a general point of $\Theta_{g-r-s}$, let $L$ be a line bundle with $2L=Q$.
We probe the multiplicity at $E$ by intersecting $P'_L V_{r,s}$
with $E-L+{\bf \frac{1}{2}}(\Theta_{k+1})$.  If $E$ is an isolated point of the intersection,
we may estimate the multiplicity by the contribution to the intersection multiplicity \cite[Thm. 12.4]{Ful}.
Since we are in an abelian variety, every connected component of
the intersection contributes non-negatively\footnote{As explained to us both by
R. Lazarsfeld and by ``ulrich'' on mathoverflow:  in the intersection theory of Fulton and Macpherson, the contribution
of any component $W$ of $X \cap Y$ inside $Z$ is given by intersecting the cone supported on $W$ with the zero section
of $TZ$.  Thus if $TZ$ is nef (in the present case the tangent bundle is trivial) this contribution is non-negative.  See
\cite[Thm. 12.2 (a)]{Ful}.} and thus we may bound
the multiplicity by the total intersection number.  So if $E$ is an isolated point of the intersection,
then by Lemma \ref{polarclass} we have
$$
 \mathrm{mult}_E(V_{r,s}) \le [E-L+\mbox{${\bf \frac{1}{2}}$}\Theta_{g-r-s}]\cap [P'_L V_{r,s}]
 \le 2^g[\Theta_{k+1}]\cap c_{\dim L, r,s}[\Theta_{g-1-k}] \le  g^2 96^g .$$

To conclude that $E$ is an isolated component of
$P'_L V_{r,s} \cap \left( E-L+{\bf \frac{1}{2}}\Theta_{k+1}\right) $, it is enough, by upper
semicontinuity of dimension in algebraic families,
to check that it is an isolated component of the intersection for some specific $L$.
We choose this $L$ as follows.
Let $\{p_1,\dots,p_k\}$ be distinct, non-Weierstrass points, containing no hyperelliptic pairs. Set $D=\sum_i p_i$ and
$L = dA(T_D C^{(k)})$. Then $\pi_{1,2}(\sigma ^{-1} L^{\vee})$ consists of the locus in $C^{(r)}\times C^{(s)}$ of divisors containing at least 1 point in at least
$k+r+s+1-g$ of the pairs $\{p_i,\bar{p_i}\}$. Hence $P'_L V_{r,s}$ consists of the union of $2^{k+r+s+1-g}$ translates of subsets $V_{t, u}$ for $t + u=g-k-1$.

Thus, it suffices to show that $E$ is an isolated point of
$(E-L+{\bf \frac{1}{2}}\Theta_{g-t-u} ) \cap  V_{t, u}$.
Suppose instead the intersection contains a curve $X \ni E$, and
consider the map from its normalization $\phi: \widetilde{X} \to J$.
Viewing $C, {\bf \frac{1}{2}}C$ as subsets of $J$,
we have a proper surjective map $({\bf \frac{1}{2}} C)^{g-t-u} \to
{\bf \frac{1}{2}} \Theta_{g-t-u} \supset X$ and so may lift $\phi$ to
$(f_1,\ldots, f_{g-t-u}): \widetilde{X} \to ({\bf \frac{1}{2}} C)^{g-t-u}$.

Similarly we may lift $\phi$ to
$(g_1,\ldots,g_t;h_1,\ldots,h_u): \widetilde{X} \to C^{t} \times C^u$.
We have
\begin{equation} \label{ela} E - L + \sum f_i(x) = \sum g_j(x) +2 \sum h_k(x) \,\,\,\,\,\,\,\,\,\,\,\,\,\,\,\,\,\,\,\,\,
\mbox{for all}
\,\, x \in \widetilde{X} \end{equation}
Taking a derivative,
we find a linear relation among tangent vectors:
\begin{equation} \label{elb} \sum df_i(x) = \sum dg_j(x) +2 \sum dh_k(x) \,\,\,\,\,\,\,\,\,\,\,\,\,\,\,\,\,\,\,\,\,
\mbox{for all}
\,\, x \in \widetilde{X} \end{equation}

Being general, $Q$ has a unique expression of the form
$Q = \sum_{i=1}^{g-t-u} Q_i$ for distinct $Q_i$.  It follows that
for $x$ in a Zariski open subset $U \subset \widetilde{X}$, the maps $f_i(x)$ take distinct values.
 At least one of the $f_i$, say $f_1$ must be nonconstant; thus on a Zariski open subset
$V \subset X$, we have $df_1 \ne 0$.  Let $x \in U \cap V$.

Recall for a point $p \in C$, we write $\ell_p \in \p T_0 J$ for the line spanned
by the image of $T_p C$ under the Abel map.
In these terms,
we have $\mathrm{im}\, df_i(x) \subset \ell_{2 f_i (x)}$, $\mathrm{im}\, dg_j(x) \subset \ell_{g_j(x)}$ and
$\mathrm{im}\, dh_k(x) \subset \ell_{h_k(x)}$.

We pick out the $g_i, h_k$ which are identically equal to
$2f_1$ or $\overline{2 f_1}$.  Reindexing as necessary, assume that $
\{g_j, h_k\} \cap \{2 f_1, \overline{2f_1} \}= \{ g_1,\ldots, g_\tau, h_1,\ldots,
h_\sigma\}$.  By Lemma \ref{tangents}, $df_1(x) \in \ell_{2f_1(x)}$
is not in the linear span of
$\{dg_{\rho + 1}(x), \ldots, dg_t(x), dh_{\sigma+1}(x),\ldots dh_u(x)\}$; it follows that

$$d f_1(x) = \sum_{j=1}^\sigma dg_j(x) + 2 \sum_{k=1}^\rho dh_k(x)$$

However, since these $g_j, h_k$ are each identified with $2f_1$ or $\overline{2f_1}$,
the right hand side is an even multiple of $df_1(x)$, which is a contradiction.
\end{proof}



\section{Equidistribution} \label{sec:eq}

We now collect the results of the article to prove the assertions in the introduction.
Because we are working with non simply connected groups, it is more natural
to split our various symmetric spaces into components enumerated by the corresponding fundamental groups.

Recall that, for the Jacobian of a hyperelliptic curve (a symmetric space for a non-split torus in $\PGL_2$), we have
used $\kappa = \pi^* \oO(1)$ to
identify $J^i \cong J^{i + 2}$.  In this section we preserve this identification, and generally take
$J^{g-1}$ and $J^{g-2}$ as representatives of the line bundles of holomorphic Euler characteristic of even and odd parity, respectively. Similarly for
$\mathrm{Bun}_2(\PP^1)$, we had identified (for us $0 \in \N$)
\begin{eqnarray*}
\mathrm{Bun}_2(\PP^1) & \leftrightarrow & \N \\
\Oo(a) \oplus \Oo(b) & \leftrightarrow &  |a-b|
\end{eqnarray*}
We now further separate this according to the parity of $|a-b|$, so that
$ \mathrm{Bun}_2^0(\PP^1) \leftrightarrow 2\N$ and
$ \mathrm{Bun}_2^1(\PP^1) \leftrightarrow 2\N + 1$.

\begin{lemma}
Let $\pi: C \to \PP^1$ be a hyperelliptic curve.
With the above identifications,
the pushforward map
$J(C) \to \mathrm{Bun}_2(\PP^1)$ is:
\begin{eqnarray*}
\pi_*:\mathrm{Pic}^{g-1}(C)& \rightarrow & 2\N \\ \Ll & \mapsto & 2 \dim \HC^0(C, \Ll) \end{eqnarray*}
and
\begin{eqnarray*}
\pi_*:\mathrm{Pic}^{g-2}(C)& \rightarrow & 2\N + 1  \\ \Ll & \mapsto & 2 \dim \HC^0(C, \Ll) + 1 \end{eqnarray*}
\end{lemma}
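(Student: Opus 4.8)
The plan is to read the splitting type of $\pi_*\Ll$ off cohomology. As $\pi$ is finite flat of degree $2$ and $\PP^1$ is smooth, $\pi_*\Ll$ is locally free of rank $2$, so by the Birkhoff--Grothendieck theorem $\pi_*\Ll\cong\Oo(a)\oplus\Oo(b)$ for integers $a\ge b$, and under the identification of $\mathrm{Bun}_2(\PP^1)$ with $\N$ its class is $|a-b|=a-b$. So it suffices to pin down $a$ and $b$, which I would do by extracting two linear relations between them.

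For the first relation, since $\pi$ is affine, $\chi(\PP^1,\pi_*\Ll)=\chi(C,\Ll)$, so Riemann--Roch on $C$ gives $(a+1)+(b+1)=\deg\Ll+1-g$, i.e. $a+b=\deg\Ll-1-g$. On $\Pic^{g-1}(C)$ this reads $a+b=-2$ and on $\Pic^{g-2}(C)$ it reads $a+b=-3$; in either case $a\ge\lceil(a+b)/2\rceil\ge -1$ and $b\le\lfloor(a+b)/2\rfloor\le -1$, so $\HC^0(\PP^1,\Oo(b))=0$ while $\dim\HC^0(\PP^1,\Oo(a))=a+1$. For the second relation, again because $\pi$ is affine, $\HC^0(C,\Ll)=\HC^0(\PP^1,\pi_*\Ll)=\HC^0(\PP^1,\Oo(a))\oplus\HC^0(\PP^1,\Oo(b))$, of dimension $a+1$ by the previous sentence; hence $a=\dim\HC^0(C,\Ll)-1$.

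Combining, $a-b=2a-(a+b)=2\dim\HC^0(C,\Ll)-2-(\deg\Ll-1-g)=2\dim\HC^0(C,\Ll)+(g-1-\deg\Ll)$. For $\Ll\in\Pic^{g-1}(C)$ this is the even number $2\dim\HC^0(C,\Ll)$, so $\pi_*$ sends $\Pic^{g-1}(C)$ into $\mathrm{Bun}_2^0(\PP^1)\leftrightarrow 2\N$ by $\Ll\mapsto 2\dim\HC^0(C,\Ll)$; for $\Ll\in\Pic^{g-2}(C)$ it is the odd number $2\dim\HC^0(C,\Ll)+1$, so $\pi_*$ sends $\Pic^{g-2}(C)$ into $\mathrm{Bun}_2^1(\PP^1)\leftrightarrow 2\N+1$ by $\Ll\mapsto 2\dim\HC^0(C,\Ll)+1$. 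This is exactly the claim.

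I do not expect a genuine obstacle here; the content is bookkeeping with Riemann--Roch on $C$ and on $\PP^1$. The only point worth care is the parity/sign accounting ensuring $b\le -1$, so that the smaller summand contributes no sections --- this is where the cases $\deg\Ll=g-1$ and $\deg\Ll=g-2$ actually diverge. Note also that the class $a-b$ depends only on $\Ll$ modulo $\pi^*\Pic(\PP^1)$ even though $\dim\HC^0(C,\Ll)$ does not, consistently with $\mathrm{Bun}_2=\mathrm{Bun}_{\PGL_2}$: twisting $\Ll$ by $\pi^*\Oo(n)$ replaces $\pi_*\Ll$ by $(\pi_*\Ll)(n)$ by the projection formula, fixing $a-b$. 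Equivalently, one recovers $a,b$ from the function $n\mapsto\dim\HC^0(C,\Ll\otimes\kappa^n)=\dim\HC^0(\PP^1,(\pi_*\Ll)(n))$, which at $n=0$ is the formula used in the introduction.
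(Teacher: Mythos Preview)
Your proof is correct and follows essentially the same approach as the paper: both use $\chi(C,\Ll)=\chi(\PP^1,\pi_*\Ll)$ to fix $a+b$, then read off $a$ from $\dim\HC^0(C,\Ll)=\dim\HC^0(\PP^1,\pi_*\Ll)$. Your version is slightly more careful in justifying $b\le -1$ and handling the boundary case $a=-1$, and the closing remarks on invariance under twisting by $\pi^*\Pic(\PP^1)$ are a nice addition not present in the paper.
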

\begin{proof}
Since $\pi$ is finite, $\HC^i(C,\Ll) =
\HC^i(\p^1, \pi_*\Ll)$.
Say $\Ll \in \mathrm{Pic}^{g-1}(C)$.  Then by Riemann-Roch, $0 = \chi(\Ll) = \chi(\pi_* \Ll)$ so we may write
$\pi_*(\Ll) = \Oo(-n) \oplus \Oo(n-2)$ for $n \ge 1$.  Then $$2\HC^0(C,\Ll) = 2\HC^0(\p^1,\pi_*L) = 2\HC^0(\p^1, \Oo(-n) \oplus \Oo(n-2)) = 2(n-1) = |n-2 - (-n)|$$ In particular, $\Ll$ has no sections iff $n = m = 1$.

On the other hand, suppose
 $\Ll \in \mathrm{Pic}^{g-2}(C)$.   Riemann-Roch, $-1 = \chi(\Ll) = \chi(\pi_* \Ll)$, so we may write
 $\pi_*(\Ll) = \Oo(-n)  \oplus \Oo(n-3)$ for $n \ge 2$.  Then $\HC^0(C, \Ll) = n-2$, while
 $|n-3 - (-n)| = 2n - 3$.
\end{proof}

Up to normalization, the natural measure on $\mathrm{Bun}_2$ assigns to each point the inverse of the number of automorphisms of the
corresponding vector bundle.
The normalized natural measure is characterized by
$$\mu(d + 2 \N) =  \frac{1}{2q^{d-1}} \,\,\,\,\,\,\,\,\,\,\,\,\,\, \mathrm{for }\,\, d > 0 $$
Note in particular $\mu(\mathrm{Bun}_2^0(\PP^1)) = \mu( \mathrm{Bun}_2^1(\PP^1)) = 1/2$.

%
%

\begin{thm} \label{thm:theta}  Assume $q > 4$; we work over $\F_q$.
    Let $\pi_i:C_i \to \PP^1$ be a sequence of hyperelliptic curves, each carrying a line bundle
    $\Mm_i$.
     Let $\mu_i$ the pushforward of the Haar measure
    on $\mathrm{Pic}(C_i)/\pi_i^* \mathrm{Pic} (\PP^1)$ to $\mathrm{Bun}_2(\PP^1)$ under the map
    $\Ll \mapsto \pi_*(\Ll \otimes \Mm_i)$.   If no curve appears infinitely many times, then
     the measures $\mu_i$ converge to the natural measure on
    $\mathrm{Bun}_2(\PP^1)$.
\end{thm}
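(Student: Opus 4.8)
The plan is to reduce the theorem to the point-count asymptotics \eqref{eq:pgl2limit} together with its odd-parity counterpart, and to prove these via the Grothendieck--Lefschetz trace formula fed by the cohomological comparisons already in hand.

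\emph{Reductions.} Since $\Ll\mapsto\Ll\otimes\Mm_i$ is translation by a fixed element of $\mathrm{Pic}(C_i)$ and the Haar measure is translation invariant, $\mu_i$ is independent of $\Mm_i$ except insofar as the parity of $\deg\Mm_i$ may interchange the clopen components $\mathrm{Bun}_2^0(\PP^1)$ and $\mathrm{Bun}_2^1(\PP^1)$; so we may take $\Mm_i$ trivial. As there are finitely many hyperelliptic curves of each genus over $\F_q$, the hypothesis forces $g_i:=g(C_i)\to\infty$. Because $\mathrm{Bun}_2(\PP^1)$ is discrete and splits into the two parity components, weak$-*$ convergence of $\mu_i$ to the natural measure is equivalent to the convergence of $\mu_i(2c+2\N)$ and of $\mu_i(2c+1+2\N)$ for every fixed $c$ (the case $c=0$ being automatic since each parity component always carries mass $1/2$). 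By the lemma identifying $\pi_*$ on $\mathrm{Pic}^{g-1}$ (resp. $\mathrm{Pic}^{g-2}$) with $\Ll\mapsto 2\dim\HC^0(C,\Ll)$ (resp. $+1$) and the notation following Lemma \ref{hyperpair}, these reduce to the assertions $\#\Theta_d(C_i)(\F_q)/\#J(C_i)(\F_q)\to q^{d-g_i}$ in the two cases $d=g_i+1-2c$ and $d=g_i-2c$; we treat the first, the second being identical.

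\emph{The cohomology of $\Theta_d$.} Fix $c$, write $d=g+1-2c<g$, and suppress $i$. Corollary \ref{stability1} gives $\HC^j(\Theta_d)\cong\HC^j(J)$, Frobenius-equivariantly, for $j<d$; and Theorem \ref{thm:thetatop}, which uses that $\Theta_d$ is a homology manifold (Lemma \ref{lem:bm1}) together with Poincar\'e duality, gives $\HC^j(\Theta_d)\cong\HC^{j+2(g-d)}(J)(g-d)$ for $j>d$. Only the middle group $\HC^d(\Theta_d)$ is not immediately pinned down. Lefschetz provides an injection $\HC^d(J)\hookrightarrow\HC^d(\Theta_d)$, and since $A\colon C^{(d)}\to\Theta_d$ is semismall one has $\chi(\Theta_d)=\chi(C^{(d)})-\chi(C^{(d-2)})=(-1)^d\bigl(\binom{2g-2}{d}-\binom{2g-2}{d-2}\bigr)$; combined with the Poincar\'e-duality symmetry of the Betti numbers of $\Theta_d$ this forces $\dim\HC^d(\Theta_d)=\binom{2g}{d}=\dim\HC^d(J)$, hence $\HC^d(\Theta_d)\cong\HC^d(J)$. (If one wishes to avoid this computation it suffices to bound $\dim\HC^d(\Theta_d)$ by the total Betti number of $C^{(d)}$, which is at most $N^g$ with $N=4$.)

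\emph{Trace formula and the main obstacle.} Substituting these identifications into the Grothendieck--Lefschetz formula for the projective variety $\Theta_d$, and using the functional equation $b_{2g-k}=q^{g-k}b_k$ for the coefficients $b_k=[t^k]L_C(t)$ of the $L$-polynomial of $C$, one obtains the closed form $\#\Theta_d(\F_q)=b_d+\sum_{l=0}^{d-1}(1+q^{d-l})b_l$, whose leading term ($l=0$) is $1+q^d\sim q^d=q^{d-g}\cdot q^g$. Since $\#J(\F_q)=L_C(1)$ and, again by the functional equation, $q^g L_C(1/q)=\#J(\F_q)$, the quantity $\#\Theta_d(\F_q)/\#J(\F_q)-q^{d-g}$ is, up to lower-order terms, the error in approximating $L_C(1/q)$ by the partial sum $\sum_{l<d}(-1)^l q^{-l}e_l$ of its Taylor expansion. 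Controlling this is the main obstacle: it requires a lower bound for the class number $\#J(\F_q)$, i.e.\ the function-field analogue of Siegel's theorem. Unlike the number-field case this is unconditional: the nonnegativity of $\#C(\F_{q^n})$ for all $n$ forces the limiting distribution of the arguments of the Frobenius eigenvalues to have nonpositive cosine-moments, whence $\#J(\F_q)\geq q^{g(1-o(1))}$ while, simultaneously, this spreading-out makes the symmetric functions $e_l$ (and hence the partial sums above) much smaller than the crude Weil bound $\binom{2g}{l}q^{l/2}$; balancing these against the Betti bound $4^g$ yields the convergence for $q>4$. Everything upstream of this estimate is formal once Lemma \ref{lem:bm1} and Theorem \ref{thm:thetatop} are granted.
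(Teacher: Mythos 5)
Your reductions and the cohomological core of the argument coincide with the paper's second proof of this theorem: translation invariance disposes of $\Mm_i$, finiteness of the set of curves of each genus forces $g\to\infty$, and the comparison of $\HC^{>d}(\Theta_d)$ with $\HC^*(J)$ via Lemma \ref{lem:bm1}, Poincar\'e duality and the Lefschetz theorem (Corollary \ref{stability1}, Theorem \ref{thm:thetatop}), together with the $4^g$ bound coming from semismallness of $C^{(d)}\to\Theta_d$, is exactly what the paper feeds into the Grothendieck--Lefschetz formula. One caveat on a detail: the identification $\HC^d(\Theta_d)\cong\HC^d(J)$ cannot be deduced from your Euler-characteristic computation, since equality of dimensions does not yield a Frobenius-equivariant isomorphism; but your parenthetical fallback --- bounding the middle cohomology by $4^g$ and absorbing its trace into the error --- is what the paper does, so this is harmless.

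The genuine gap is in the final quantitative step. With only the inputs you have actually justified --- the Weil bound $|b_l|\le\binom{2g}{l}q^{l/2}$ on the coefficients of the $L$-polynomial, and a class-number lower bound $\#J(\F_q)\ge q^{g(1-o(1))}$ (which is indeed unconditional: already $\prod_i|1-\alpha_i|\ge(\sqrt q-1)^{2g}$, and the explicit-formula argument you sketch improves this to $q^g/O(\log g)$) --- the relative error in $\#\Theta_d(\F_q)/\#J(\F_q)=q^{d-g}(1+\mathrm{error})$ comes out to $O(4^g\,q^{-g/2+o(g)})$, which tends to zero only for $q>16$. That is precisely the threshold the paper's own second proof reaches. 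Your assertion that the spreading-out of the Frobenius angles makes the elementary symmetric functions $e_l$ "much smaller than the crude Weil bound," and that "balancing" then yields convergence for $q>4$, is the entire content of the missing step: it amounts to the estimate $P_C(t)=O_\epsilon(q^{\epsilon g})$ on the circle $|t|=q^{-1/2}$, which the paper imports from \cite{AT} and deploys in its separate first (zeta-function) proof in order to get below $q>16$. You neither prove nor cite this estimate, and no computation is offered for the claimed balance; as written, your argument establishes the theorem only for $q>16$, not for the stated range $q>4$.
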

\begin{proof}
As there are only finitely many curves over $\F_q$ of any given genus, the limit amounts to a limit as $g \to \infty$.
The measure given by
$\Ll \mapsto \pi_*(\Ll \otimes \Mm_i)$ is the same as the measure given by
$\Ll \mapsto \pi_* \Ll$, so we assume wlog that the $\Mm_i$ are all trivial.
Thus we must show $\mu_i(d + 2 \N) \to q^{1-d}/2$ for $d > 0$.  For $d = 1$ this is just the statement that
$1/2 = \mu_i(1 + 2\N) = \mu(J^{g-2}(C))$, which holds because $C$ has a point over $\F_q$.   For $d > 1$,
we have $\mu_i(d + 2 \N) = \mu( \Theta_{g-d + 1} )$.
Thus it remains to show
\begin{equation}
\label{eq:pgl2limit}
\lim\limits_{g\to \infty} \frac{\# \Theta_{g-d+1} (\F_q) }{\# J_{g-d+1}(C)(\F_q)} = q^{1-d}
\end{equation}
We will give two proofs of this fact.

{\bf First proof}.
Let $C^{(n)}_0 \subset C^{(n)}$ be the locus of divisors with no hyperelliptic pairs.
In the Grothendieck ring
of varieties we have

\[Z_C(t) = \sum t^n C^{(n)} = \left(\sum t^{2n} \p^n \right)\left( \sum t^n C^{(n)}_0 \right) = \frac{1}{(1-t^2)(1-t^2 \mathbb{A}^1)} \left( \sum t^n C^{(n)}_0 \right) \]

Thus we can write
\begin{equation} \label{zetaresidue}
C^{(n)}_0= \mathrm{res}_{t=0}\left( t^{-n-1}(1-t^2)(1-t^2\mathbb{A}^1) Z_C(t)\right). \end{equation}
We now count points on both sides, and we abuse notation to still write $Z_C(t)$ for the point-counting zeta-function. Recall that $Z_C(t)=\frac{P_C(t)}{(1-t)(1-qt)}$  where $P(t)$ is a
polynomial of degree $2g$ that has the following properties: \begin{itemize} \item $P_C(t)$ has constant term 1 \item All roots of $P_C(t)$ have absolute value $q^{-\frac12}$, and thus
$P_C(t)=(t^2q)^gP_C(\frac{1}{qt})$. \item $P_C(1)=\#J(\F_q)=q^gP_C(q^{-1}).$ \item By \cite[Lem. 3]{AT}, on the circle $|t|=q^{-\frac12}$ we have the estimate $P_C(t)=O_{\epsilon}(q^{\epsilon g})$\end{itemize}

Going back to equation \eqref{zetaresidue}, we compute the residue by taking an integral around the circle $|t|=q^{-\frac12}$ and subtract off the residue at $t=\frac{1}{q}$. Thus
\begin{align*} \#C^{(n)}_0(\F_q)&=\frac{1}{2\pi i}\int_{|t|=q^{-\frac12}}t^{-n-1}(1+t)\frac{P(t)(1-qt^2)}{1-qt}-\mathrm{res}_{t=q^{-1}}
\left(t^{-n-1}(1+t)\frac{P(t)(1-qt^2)}{1-qt} \right) \\
&=O_{\epsilon}(q^{(\frac{n}2+\epsilon)\cdot g}) +q^n(1+q^{-1})(1-q^{-1})P(q^{-1})\\ &=\#J(\F_q)\cdot\left( q^{n-g}(1-q^{-2})+o(q^{-g/2+\epsilon\cdot g})\right)\\ \end{align*}
Finally we have $\# \Theta_n (\F_q) = \sum_{i=0}^{\lfloor n/2 \rfloor} C_0^{(n-2i)}$; summing the series yields the claim.

\vspace{2mm} {\bf Second proof.}
 Recall that for any variety
$X_{/\F_q}$, we have $$\# X(\F_q) = \sum (-1)^i \trf \HC_c^i (X \otimes \overline{\F}_q, \Qlbar).$$  Since $J, \Theta_j$ are compact, the eigenvalues of Frobenius on $\HC^i$ are bounded i absolute value by $q^{i/2}$ \cite{D}.
Thus to compare the point counts it suffices to compare the higher cohomology groups of $J$ and $\Theta_j$, and to bound the total dimension of the lower cohomology groups.   We have seen in Theorem \ref{thm:thetatop} that
$\HC^j(\Theta_i, \Qlbar)\cong \HC^{j+2g-2i}(J,\Qlbar)(g-i)$ for all $j > i$.  On the other hand we had
$\HC^j(\Theta_i, \Qlbar)\cong \HC^{j}(J,\Qlbar)$ for $j < i$.

The argument in the first proof can be used to compute the dimension of the middle cohomology
$\HC^i(\Theta_i, \Qlbar)$ explicitly.
However, we may produce the  bound $\dim \HC^*(\Theta_i)\ll 4^g$ by a slightly softer argument.
Indeed, since $C^{(i)} \to \Theta_i$ is semismall, it is immediate from the decomposition theorem
\cite{BBD} that $\HC^*(\Theta_i)$ is a summand of $\HC^*(C^{(i)})$.  The latter is known explicitly and satisfies the stated bound.

Collecting these estimates and comparisons, we have:
 \begin{align*} \frac{\#\Theta_{n}(\F_q)}{\#J(\F_q)}&=\frac{\sum_{i=0}^{2n}(-1)^i \trf \HC^i(\Theta_{n},\Qlbar)}{\sum_{i=0}^{2g}(-1)^i \trf \HC^i(J,\Qlbar)}\\
&=\frac{\sum_{i=n+1}^{2n}(-1)^i \trf \HC^i(\Theta_n,\Qlbar)+O(4^g\cdot q^{\frac{g}{2}})}{\sum_{i=n+1}^{2n}(-1)^i \trf \HC^i(\Theta_n,\Qlbar)(n-g)+O(4^g\cdot q^{\frac{g}{2}})}\\ &=q^{n-g}+O(4^g\cdot
q^{-\frac{g}{2}}). \end{align*}

The above  implies the result as soon as $q>16$.
\end{proof}

We turn to the case of $G = \PGL_2 \times \PGL_2$.  We are interested in the following sort of maps.

\begin{eqnarray*}   \mathrm{Pic}(C)/\pi^* \mathrm{Pic} (\PP^1)  & \to & \mathrm{Bun}_2(\PP^1) \times \mathrm{Bun}_2(\PP^1) \\
M & \mapsto & ( \pi_* (M \otimes L), \pi_* (M \otimes L') )
\end{eqnarray*}

To conclude a comparison on point counts, we require a bound on the lower cohomologies.  We believe

\begin{conj} \label{bound} There exist universal constants $N> 0$ such that for any curve of genus  $g \gg a,b$,
and any line bundle $L$ with $e(L) \le g - a - b$,
we have the bound $$\dim \HC^*(L - \Theta_{g-a} \cap \Theta_{g-b}) < N^g$$ \end{conj}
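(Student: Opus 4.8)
The plan is to reduce the bound to an estimate on the stalk cohomology of a single perverse sheaf, and then to estimate that stalk cohomology by Morse-theoretic data coming from polar varieties; the reduction step works in any characteristic. Since $\Sigma \colon \Theta_{g-a} \times \Theta_{g-b} \to J^{2g-a-b}$ is proper and, by Lemma \ref{lem:bm1}, its source is a homology manifold, the decomposition theorem gives $R\Sigma_* \Qlbar[2g-a-b] = \bigoplus_i {}^p R^i \Sigma_* \Qlbar[2g-a-b]$. By Corollary \ref{thetathetalef} the summands with $i<0$ are constant with fibre $\HC^{g+i}(J)$, and relative hard Lefschetz matches the $i>0$ part to the $i<0$ part up to a Tate twist; since $\dim \HC^*(J) = 4^g$, it is enough to bound $\dim \HC^*(F_y)$ for $y$ in $\widetilde J^{2g-a-b}$, where $F := {}^p R^0 \Sigma_* \Qlbar[2g-a-b]$. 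Replacing the constructible function $y \mapsto \chi(F_y)$ by $\Sigma_* 1$ (an additive error bounded by $4^g$) and using the motivic inclusion–exclusion identity (\ref{eq:stoa}), one reduces further to controlling the pushforward of $1$ along the Abel-type maps $\underline{A} \colon C^{(g-a)} \times C^{(g-b)} \to J$.

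To bound $\dim \HC^*(F_y)$ I would combine Massey's Morse-type inequalities for perverse sheaves (Theorem \ref{thm:massey}) with the higher-discriminant expansion of $\underline{A}_* 1$ (Theorem \ref{thm:index}). This needs four inputs, all available over $\C$: the Euler characteristic of a general fibre of $\underline{A}$ is at most $8^g$ (Proposition \ref{prop:euler}); the codimension-$i$ components of $\Delta^i(\underline{A})$ lie among the varieties $\Theta_t + {\bf 2}\Theta_u$ with $t+u = g-i$ (Corollary \ref{cor:higher}), which comes out of the canonical-decomposition analysis of the singular loci of fibres in Section \ref{subsec:pairs}; the coefficients attached to these components in the expansion are at most $10^g$, verified by showing that over a generic probing disc the only singularities are ordinary double points and counting them (Proposition \ref{prop:nodecalc}); and the polar multiplicities of the $\Theta_t + {\bf 2}\Theta_u$ are at most $g^2 96^g$, via the identification $P_L V_{r,s} = \pi_{r,s}(\sigma^{-1}(L^\vee))$ (Lemma \ref{polarsame}), a class computation in $\HC^*(J)$ using Poincar\'e's formula (Lemma \ref{polarclass}), and an intersection estimate that is positive because $TJ$ is trivial (Proposition \ref{prop:polar}). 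Multiplying these bounds and feeding the result through Theorem \ref{thm:massey} gives $\dim \HC^*(L - \Theta_{g-a} \cap \Theta_{g-b}) \ll_\epsilon (960 + \epsilon)^g$, which is Theorem \ref{overc}; so the Conjecture holds in characteristic zero with, say, $N = 961$.

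The main obstacle — and the reason the statement is conjectural in characteristic $p$ — is that the entire second step rests on tools developed only in the complex-analytic, or at least characteristic-zero, setting: the local Euler obstruction, polar varieties, Massey's characteristic polar cycles, and the integration-with-respect-to-Euler-characteristic formalism together with the hyperplane formula for the local Euler obstruction that underlie them (\cite{Ma, LT, MS2}). A proof in characteristic $p$ would need either an $\ell$-adic analogue of this package — a bound on the stalk dimensions of a perverse sheaf in terms of polar data, compatible with the six operations — or a lifting argument: spread $C$, the $\Theta$'s, and $\Sigma$ over a base and compare $F$ with its counterpart on a characteristic-zero fibre. The trouble with the latter is that the stalk cohomology dimension of a perverse sheaf is not constant in a family: it can jump on special fibres precisely by the vanishing-cycle contributions that the polar machinery is built to measure, so proper base change alone does not transport the bound, and bounding those jumps seems to require the very input one is trying to circumvent.
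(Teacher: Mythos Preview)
Your proposal is correct and follows the paper's own approach essentially verbatim: the statement is a conjecture, the paper proves only the characteristic-zero case (Theorem~\ref{overc}) via exactly the reduction to ${}^p R^0 \Sigma_* \Qlbar$, the Massey/Migliorini--Shende machinery, and the four numerical inputs (Propositions~\ref{prop:euler}, \ref{prop:nodecalc}, \ref{prop:polar} and Corollary~\ref{cor:higher}) that you cite, arriving at the same bound $(960+\epsilon)^g$. Your diagnosis of the obstruction in characteristic $p$---the absence of an $\ell$-adic analogue of the Euler-obstruction/polar-variety formalism, and the failure of naive specialization to control stalk dimensions---matches the paper's own remarks.
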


Note that in Theorem \ref{overc}, we established this statement for hyperelliptic curves over $\C$, with
$N = 960$.  In fact we believe the statement without the assumption on $e(L)$.

\begin{theorem} \label{thm:thetatheta}
Assume Conjecture \ref{bound} holds for some given $N$.  We work over $\F_q$ for some $q > N^4$.
Let $\pi_i:C_i \to \PP^1$ be a sequence of hyperelliptic curves, each carrying line bundles
$\Mm_i, \Mm_i'$.
Let $\mu_i$ the pushforward of the Haar measure
under the map
\begin{eqnarray*}
     \mathrm{Pic}(C_i)/\pi_i^* \mathrm{Pic} (\PP^1) & \to & \mathrm{Bun}_2(\PP^1) \times \mathrm{Bun}_2(\PP^1) \\
    \Ll & \mapsto & \left( \pi_*(\Ll \otimes \Mm_i), \pi_*(\Ll \otimes \Mm_i') \right)
\end{eqnarray*}
If no curve appears infinitely many times, and  for each $n \in\N$, there exists $A(n)$ such that for $i>A(n)$, $L_i \notin \Theta_{n}$,
then some the measures $\mu_i$ converges to the natural measure on one of
\begin{enumerate}
\item[(0)]  $\mathrm{Bun}_2^0(\PP^1) \times \mathrm{Bun}_2^0(\PP^1) \coprod \mathrm{Bun}_2^1(\PP^1) \times \mathrm{Bun}_2^1(\PP^1)$
\item[(1)]  $\mathrm{Bun}_2^0(\PP^1) \times \mathrm{Bun}_2^1(\PP^1) \coprod \mathrm{Bun}_2^1(\PP^1) \times \mathrm{Bun}_2^0(\PP^1)$
\end{enumerate}
If on the other hand such $A(N)$ do not exist, then there exists an effective divisor $D$ on $\PP^1$ and an infinite subsequence such that
$L_i \cong \oO_{C_i}(D_i)$ and $\pi_*(D_i)=D$. In this case, the pushforward measures for this subsequence converge to $\mu_D$ defined in Appendix \ref{smallshift}
\end{theorem}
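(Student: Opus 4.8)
The plan is to reduce the statement to the point-count asymptotics of Section~\ref{sec:top}, just as the second proof of Theorem~\ref{thm:theta} reduces the $\PGL_2$ case to \eqref{eq:pgl2limit}. First, two reductions. The pushforward measure depends only on $L_i:=\Mm_i\otimes(\Mm_i')^{-1}$: replacing $\Ll$ by $\Ll\otimes(\Mm_i')^{-1}$ is a measure-preserving bijection of $\Pic(C_i)/\pi_i^*\Pic(\PP^1)$, so we may take $\Mm_i'=\oO$, $\Mm_i=L_i$; and since no curve repeats while $\F_q$ carries only finitely many curves of each genus, $g(C_i)\to\infty$, so all limits below are as $g\to\infty$. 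If the $A(n)$ do not all exist, some fixed $\Theta_n$ contains $L_i$ for infinitely many $i$; passing to that subsequence, $L_i\cong\oO_{C_i}(D_i)$ with $D_i$ effective of degree $\le n$, and as $\mathrm{Sym}^{\le n}(\PP^1)(\F_q)$ is finite we may pass to a further subsequence on which $D:=\pi_*(D_i)$ is constant. That $\mu_i\to\mu_D$ for such a sequence is precisely the content of Appendix~\ref{smallshift}. So from now on assume the $A(n)$ exist.

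The two marginals of $\mu_i$ are the pushforwards of Haar measure under $\Ll\mapsto\pi_*(\Ll\otimes L_i)$ and $\Ll\mapsto\pi_*\Ll$; by Theorem~\ref{thm:theta} each converges to the natural measure on $\Bun_2(\PP^1)$, so $\{\mu_i\}$ is tight and a subsequence converges weak-$*$ to a probability measure $\mu_\infty$ on $\Bun_2(\PP^1)\times\Bun_2(\PP^1)$. Passing to a further subsequence makes $\deg L_i\bmod 2$ constant, and the parity bookkeeping in the lemma preceding Theorem~\ref{thm:theta} then shows that $\mu_\infty$ is supported on component~(0) if this parity is even and on~(1) if odd. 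Since $\mu_\infty$ and the natural measure $\nu$ on the relevant component are probability measures on $\N^2$ with the same one-dimensional marginals, it suffices to match their masses on the ``corners'' $\{d_1\ge 2c,\ d_2\ge 2d\}$, $c,d\ge 1$. Using the identity $\{\dim\HC^0(C,\Ll)\ge r+1\}=\Theta_{g-1-2r}$ inside $\Pic^{g-1}(C)$ and its analogue in $\Pic^{g-2}(C)$, together with the $L_i^{-1}$-translate in the first factor, and computing $\nu$ from $\mu(d+2\N)=\tfrac12 q^{1-d}$, this reduces to the claim that for all fixed $a,b\ge 1$,
\[ \lim_{g\to\infty} \frac{\#\bigl(L_i^{-1}\Theta_{g-a}\cap\Theta_{g-b}\bigr)(\F_q)}{\#J(C_i)(\F_q)} = q^{-a-b}. \]

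To prove this I run the argument of the second proof of Theorem~\ref{thm:theta}. By the discussion following Conjecture~\ref{conj:thetatheta} together with the ergodic input of \cite{EMiV}, the case in which $L_i$ is sufficiently special --- precisely, the case \emph{not} covered by ``$L_i\notin\Theta_{\alpha g(C_i)}$ for every $\alpha\in(0,1)$'' --- already produces the natural measure; so, passing to a subsequence, I may assume $e(L_i)\le\tfrac12(g-a-b)$ for every $a,b$ under consideration (in particular $e(L_i)\le g-a-b$). Put $X_i:=L_i^{-1}\Theta_{g-a}\cap\Theta_{g-b}$; by Corollary~\ref{cor:dimestimate} it is proper over $\F_q$ of the expected dimension $g-a-b$, so Grothendieck--Lefschetz gives $\#X_i(\F_q)=\sum_j(-1)^j\,\mathrm{Tr}(\Frob_q\mid\HC^j(X_i))$. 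Theorem~\ref{cohomology} identifies $\HC^j(X_i)\cong\HC^{j+2a+2b}(J)(a+b)$ for $j>g-a-b+e(L_i)$, so the high-degree part of the sum is $q^{-a-b}$ times the corresponding tail of $\sum_j(-1)^j\mathrm{Tr}(\Frob_q\mid\HC^j(J))$; by the Weil bounds the omitted degrees $\le g+a+b+e(L_i)$ of $\HC^*(J)$ contribute at most $4^g q^{(g+a+b+e(L_i))/2}$, while Conjecture~\ref{bound} bounds the remaining low-degree part of $\#X_i(\F_q)$ by $\dim\HC^*(X_i)\,q^{(g-a-b+e(L_i))/2}\le N^g q^{(g-a-b+e(L_i))/2}$. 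Dividing by $\#J(C_i)(\F_q)\asymp q^g$ and using $e(L_i)\le\tfrac12(g-a-b)$, one gets
\[ \frac{\#X_i(\F_q)}{\#J(C_i)(\F_q)} = q^{-a-b} + O\bigl(4^g q^{-g/4}\bigr) + O\bigl(N^g q^{-g/4}\bigr), \]
and both error terms tend to $0$ precisely because $q>N^4$. This gives the displayed limit, hence $\mu_\infty=\nu$; since every weak-$*$ limit point of $\{\mu_i\}$ arises in this way --- and within a fixed parity class every subsequence has a further subsequence converging to $\nu$, so the whole sequence converges --- some subsequence of $\mu_i$ converges to the natural measure on component~(0) or~(1).

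The crux is the last step. One has to know that the ergodic methods of \cite{EMiV} and the geometric count together exhaust the remaining case --- this is what pins the threshold near $e(L_i)\sim\tfrac12 g$ --- and, in the geometric regime, that the exponential error $N^g$ coming from Conjecture~\ref{bound} is dominated by the saving $q^{-g/4}$ afforded by $e(L_i)\le\tfrac12(g-a-b)$; that balance is exactly what forces the hypothesis $q>N^4$. Everything else is bookkeeping with parities and the already-established Theorems~\ref{thm:theta} and~\ref{cohomology} (and, in the second case, Appendix~\ref{smallshift}).
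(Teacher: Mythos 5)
Your proof follows the paper's argument essentially step for step: the same reduction to $\Mm_i'=\oO$, the same parity bookkeeping and reduction to corner masses, the same dichotomy between the ergodic regime ($L_i\in\Theta_{\alpha g}$, deferred to the function-field analogue of \cite{EMiV}) and the geometric regime, and the same combination of Theorem~\ref{cohomology}, Conjecture~\ref{bound}, Grothendieck--Lefschetz and the Weil bounds, with your threshold $e(L_i)\approx\tfrac12(g-a-b)$ playing exactly the role of the paper's ``taking $\epsilon\to1$'' under the hypothesis $q>N^4$. One small correction: in the case where the $A(n)$ do not exist, the convergence to $\mu_D$ is not ``the content of Appendix~\ref{smallshift}'' (that appendix only \emph{defines} $\mu_D$); the paper obtains this convergence, like the rest of the ergodic regime, from the function-field analogue of the corollary in \cite{EMiV}, \S 10.3.
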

\begin{proof}
Without loss of generality,
we replace $\Mm_i$ with a trivial line bundle and $\Mm_i'$ with $\Mm_i^{-1} \otimes \Mm_i'$.  We also change the map to
$$\Pi: \Ll \mapsto  \left( \pi_*\Ll , \pi_*(\Mm_i \otimes \Ll^{-1}) \right)$$
precomposing with inversion in the second factor does not change the pushforward
measure.  Checking convergence of measure on
e.g. $2\N \times (k + 2\N)$ or $(2 \N + 1) \times (k + 2 \N)$ reduces to Theorem \ref{thm:theta}.
For $a, b \ge 1$,
$$\mu_i( (a + 1 + 2 \N) \times (b + 1 + 2 \N) ) =  \# \left(\Theta_{g-a} \cap \Mm_i - \Theta_{g-b} \right)(\F_q)$$
This set is empty unless $\deg \Mm_i + (g-a) + (g-b) \cong 0 \pmod 2$; note in particular
the pushforward measure is supported on either the set (0) or (1) in the statement of the theorem according to
the parity of $\Mm_i$.  Thus to get convergence we must now pass to a subsequence of line bundles with fixed parity.

We now set up our basic dichotomy.  Fix $0 < \epsilon < 1$.

\vspace{2mm} {\bf Case 1.} There exists a subsequence $(C_i, \Mm_i)$ such that $\Mm_i \in \Theta_{\epsilon g(C_i)}$
for all $i$.  We pass to this subsequence.

The desired statement, including the claim about the measures $\mu_D$, is the exact
analogue  in the function field case of the Corollary in [\cite{EMiV}, \S 10.3] and the ensuing remark, and the proof carries over.
In the language of that corollary (taking $\delta = 1-\epsilon$), $\Q(\sqrt{-d})$ is the analogue of $C_i$,
$\mathfrak{p}_d$ is the analogue of $L_i$, $N(\mathfrak{p}_d)$ is the analogue of the largest $N$ such that $L_i\notin \Theta_N$, and $\mathrm{SO}_3(\Z)\backslash S^2$ is the analogue of
$\mathrm{Bun}_2$.  The proof is written in the same adelic language as in Appendix \ref{sec:adeles}.
They study ergodic theory of the imbedded torus $T(\oO_v)$ for $v$ a place that splits.  In the number field
setting the existence of such $v$ necessitates the assumption that $d\equiv \pm1 (5)$.
In our setting we require no such assumption: since the Weil conjectures guarantee that some point $v \in \PP^1$
of degree at most $2\log(g_i)$ splits in $C_i$, and this suffices for the proof \cite{ELPC}.

\begin{remark}
We sketch a different approach to the same result, using more analytic methods.   It would first
be necessary to develop a suitable analogue of Waldspurger's formula  \cite{W} in the function field setting
(such a formula is developed in \cite{AT} but not in sufficient generality). One breaks up the map into
$$\Pic(C)/\pi^*\Pic(\PP^1)\xrightarrow{F}X_0(D)\rightarrow \mathrm{Bun}_2\times\mathrm{Bun}_2.$$
Now, letting $d=\deg(D)$ one shows that there are $q^{d+o(d)}$ orthonormal Hecke cusp-forms $\phi_i$ on $X_0(D)$ and for each one of these $||\phi_i||_{\infty}=q^{-\frac{d}{2}+o(d)}$. Using Waldspurger's formula, one shows

$$\int_{Pic(C)/\pi^*Pic(\PP^1)}F^*\phi_i = L(\frac{1}{2},\phi_i\otimes\chi_{C})=o(q^{-\frac{d}{2}-\frac{g_C}{2}+o(g_C+d)})$$ where the last inequality follows by work of Deligne.
 Now, if we keep $D$ fixed and vary $C_i$ this implies that the pushfoward measures under $F$ converge to $\mu_{X_0(D)}$, which implies the claim. If $D$ varies but with
 $d<\epsilon g$ then one deduces the result by taking a test function $f$ on $\mathrm{Bun}_2\times\mathrm{Bun}_2$ with $\int f = 0$, and noticing that the pullback (ignoring Eisenstein series for simplicity)
 is a sum $\sum_i c_i\phi_i+c_{f,D}$ with $\sum_{i}|c_i|=O(q^d)$, and thus
 $$\int_{F_*Pic(C)/\pi^*Pic(\PP^1)}f = c_{f,D}+ q^{\frac{d-g_C}{2} + o(d+g_C)}=c_{f,D}+o(q^{-\frac{\delta g_C}{2}+o(g_C)}).$$

The claim now follows from the fact that $c_{f,D}\rightarrow 0$, which is equivalent to the fact that $\mu_D$ tends to the natural measure as $\deg D\rightarrow\infty$, which follows from Deligne's proof of the Ramanujan conjectures.
\end{remark}

\vspace{2mm} {\bf Case 2.}  There exists a subsequence $(C_i, \Mm_i)$ such that $\Mm_i \notin \Theta_{\epsilon g(C_i)}$
for all $i$.  We pass to this subsequence and turn to geometry.

As $\Mm_i$ is anyway defined
only up to a multiple of $\kappa$ we now fix this multiple and take $\deg \Mm_i = 2g  - a - b$.  The statements regarding
the measure when one of $a, b$ is $0$ or $1$ reduce to Theorem \ref{thm:theta}; it remains to treat
the case when $a, b \ge 2$.

For clarity, let us assume $a, b, \deg \Mm$ are all even; the remaining cases differ only notationally.  Then we must show
$$
\frac{\mu_i( (a + 2 \N) \times (b + 2 \N) ) }{\mu_i(\mathrm{Bun}_2^0(\PP^1) \times \mathrm{Bun}_2^0(\PP^1))}
\to \frac{\mu( (a + 2 \N) \times (b + 2 \N) ) }{\mu(\mathrm{Bun}_2^0(\PP^1) \times \mathrm{Bun}_2^0(\PP^1))}
$$
or in other words
$$
\frac{  \# \left(\Theta_{g-a+1} \cap \Mm_i - \Theta_{g-b+1} \right)(\F_q)}{  \# (J_0(C))(\F_q)} \to q^{2-a-b}
$$

According to Theorem \ref{cohomology}, for any $L$ with $e(L) \le g - a - b$, we have
$$\HC^i(\Theta_{g-a} \cap L-\Theta_{g-b}, \Qlbar) \cong \HC^{i+2a + 2b}(J,\Qlbar)(a+b) \,\,\,\,\,\,\,\,\, \mathrm{for}\,\,\,\,  i > g - a - b + e(L)$$
and on the other hand we have
$\dim \HC^i(\Theta_{g-a} \cap L-\Theta_{g-b}, \Qlbar) < N^g$.
Thus we have

 \begin{align*} \frac{\# (\Theta_{g-a} \cap L-\Theta_{g-b} ) (\F_q)}{\#J(\F_q)}
 &=\frac{\sum_{i=0}^{2(g-a-b)}(-1)^i \trf \HC^i(\Theta_{g-a} \cap L-\Theta_{g-b},\Qlbar)}{\sum_{i=0}^{2g}(-1)^i \trf \HC^i(J,\Qlbar)}\\
&=\frac{\sum_{i=g-a-b+e(L)}^{2(g-a-b)}(-1)^i \trf \HC^i(\Theta_{g-a} \cap L-\Theta_{g-b},\Qlbar)+O(N^g\cdot q^{\frac{g-a-b+e(L)}{2}})}
{\sum_{i=g+ a + b + e(L)}^{2g}(-1)^i \trf \HC^i(J,\Qlbar)+O(N^g\cdot q^{\frac{g + a + b + e(L)}{2}})}\\
&=\frac{\sum_{i=g+a+b+e(L)}^{2g}(-1)^i \trf \HC^{i}(J,\Qlbar)(a+b) +O(N^g\cdot q^{\frac{g-a-b+e(L)}{2}})}
{\sum_{i=g+a + b +e(L)}^{2g}(-1)^i \trf \HC^i(J,\Qlbar)+O(N^g\cdot q^{\frac{g + a + b + e(L)}{2}})}\\
&=q^{-a-b}+O(N^g\cdot q^{\frac{e(L) -a -b - g}{2}}). \end{align*}

Recall $\Theta_{k-2E} = \{L \in J^{k}: e(L) \ge E\}$, i.e. $L \notin \Theta_r \iff e(L) < (k-r)/2$.  For us
$k = 2g-a-b$, so we have $L \notin \Theta_r \iff e(L) < g - (a+b+r)/2$, and so the error term for
$L \notin \Theta_r$ is $O(N^g q^{- r/4})$.  (We are studying the evaluation of the limit measure
on some fixed $a,b$, which thus behave as constants when $g \to \infty$.)
Thus since
$\Mm_i \notin \Theta_{\epsilon g}$, the error term becomes $O(N^g q^{-\epsilon g/4})$, which tends to zero
as $g \to \infty$ so long as $q > N^{4/\epsilon}$. Taking $\epsilon\to 1$ establishes the claim.
\end{proof}

\appendix

\section{Adelic generalities} \label{sec:adeles}

We recall here the setting of the equidistribution conjecture.   Let $\A_k$ denote the adeles over
a number or function field $k$.  Consider an algebraic group $G$ over $k$
with a finite volume symmetric space $X_G := G(k) \backslash G(\A_k)$; we write
$\mu_G$ for its normalized Haar measure.  Let $H \subset G$ be a subgroup such that $X_H$ has finite volume, which we normalize to $1$.
For $g \in G(\A_k)$,
there is a map $\rho_{g}: X_H \to X_G$ given by $h \mapsto \rho(h)g$ and a pushforward measure $\rho_{g*} \mu_H$.
In this setting,
when $G$ is a connected group with simply connected cover $\tilde{G}$,
we expect the set of measures  $\{ \rho_{g*}^0 \mu_H\} \cup \{0\}$ to be weak$-*$ closed.

When $k$ is a function field, such statements can be rephrased in terms of vector bundles over curves; explaining
this in detail for $G = \PGL_2$ is the task of this Appendix.

\subsection{Double coset spaces as moduli of bundles}

Let $K \subset G(\A_k)$ be a compact open subgroup.  We may pose the equidistribution conjecture for the space
$X_G / K$.  Moreover, as $X_G = \lim_K X_G /K$, the equidistribution conjecture holds for $X_G$ iff it holds for
all such quotients, and we may moreover restrict ourselves to $K$ of the form $K = \prod K_\nu$ over the places $\nu$ of $k$.
Likewise, given a subgroup $H < G$, we may descend the map $\rho_g:X_H \to X_G$ to some
$X_H/K_H \to X_G / K_G$ for any $K_H \subset H\cap g K_G g^{-1}$.

Henceforth we take $k = \F_q(C)$, the function field of a curve $C$.  Give a pair $(G,K)$ of a group $G$ over $k$ and an open compact
subgroup $K=\prod_v K_v\subset G(\A_k)$ we may define a sheaf of groups $\mathfrak{G}$ by $$\mathfrak{G}(U):=G(k)\cap\prod_{v\in U}K_v.$$

If $G$ is defined over $\F_q$,
we may consider the maximal compact subgroup $G(\oO_C) = \prod G(\oO_\nu)$;
the quotient is the space of Zariski locally trivial $G$-bundles over $C$:
$$G(k) \backslash G(\A_k) / G(\oO_C) = \mathrm{Bun}_G (C)$$
Explicitly, the identification is the following: to an element $g \in G(\A_k)$, we assign the bundle $E_g$ whose sections
are given by
$$E_g(U) = G(k) \cap \prod_{\nu \in U}  G(\oO_\nu)g_\nu^{-1}. $$

For the special case where $G=\GL_r$, we can identify $X_G$ with $\mathrm{Bun}_r$ as follows: using the natural action of $\GL_r(k)$ on $k^{\oplus r}$, we define a vector bundle $V_g$ by
\begin{equation}\label{bunvec}
V_g(U) = k^{\oplus r}\cap \prod_{\nu\in U} g\oO_{\nu}^{\oplus r}.
\end{equation}

\vspace{2mm}

\subsection{Rank 1 Tori}
We explain here how in the case of rank 1 tori in $\PGL_2$, the map $\rho_g$ amounts in the simplest case to the pushforward map along a double cover of curves from line bundles to rank 2 vector bundles.

By descent, $\mathrm{Hom}\left(\mathrm{Gal}(\overline{k}/k),  \mathrm{Aut}_{\overline{k}} (\G_m) \cong \mathrm{GL}_1(\Z) \right)$
classifies rank 1 tori over $k$.
Explicitly, given
$\rho \in \mathrm{Hom} (\mathrm{Gal}(\overline{k}/k),  \pm 1 )$, the fixed field of $\ker \rho$ is a degree two extension $l$ of $k$, i.e.
the function field of a degree two cover  $ \pi: D \to C$. The corresponding torus is $T \cong_k (\Res_{l/k} {\G_m})/ \G_m$.
The map
$\Res_{l/k} \G_m \to \Aut\,\, \Res_{l/k} \G_a \cong {\GL_2}_{/k}$
gives rise to a conjugacy class of inclusions $T \hookrightarrow \PGL_2$.
On $k$ points, we are just saying $T(k) = l^*/k^*$ and the map
$T \to \PGL_2$ comes from inclusion of $l^*$ into the $k$-vector space automorphisms of $l$, together
with a choice of isomorphism of $k$-vector spaces $l\cong k^{\oplus 2}$.
One can show that every non-split torus of ${\PGL_2}_{/k}$ arises in this manner.

There is also a canonical map from $\mathrm{Pic}(D)/\pi^* \mathrm{Pic}(C)$ to $\mathrm{Bun}_{\PGL_2}(C)$ given by pushforward.
Its adelic description involves the {\em choice} of isomorphisms $l\cong k^{\oplus 2}$.  Indeed,
 letting $\oO_{l,v}$ denote the integral closure in $l$ of $\oO_v$, we have two lattices
 $$\oO_{l,v} \subset l_v \cong k_v^{\oplus 2} \supset \oO_v^{\oplus 2}$$
Let $\gamma_v \in \GL_2(k_v) $ be any element such that $\oO_{l,v} = \gamma_v \cdot \oO_v^{\oplus 2}$, and let
$\gamma = \prod_v \gamma_v$.
Consider any $t\in \GL_1(\A_l)$ and let $V_t$ denote  the corresponding line bundle.
It follows tautologically from Equation \eqref{bunvec} that
 $\pi_*(V_t) \cong V_{(\Res_{l/k} t) \gamma}$. Note also
$\oO_{l,v}^{\times} \subset \gamma_v\GL_2(\oO_v)\gamma_v^{-1}$.

Since $T$ is a torus there is a unique maximal compact subgroup $T(\oO) < T(\A_k)$.  The simplest maps $\rho_g:X_T \hookrightarrow X_{\PGL_2} $
of symmetric spaces, and the only ones we consider in this paper,  are those which descend to the final quotient:
$$\mathrm{Pic}(D) / \pi^* \mathrm{Pic}(C) = X_T/T(\oO) \to X_{\PGL_2} / \PGL_2 (\oO) = \mathrm{Bun}_{\PGL_2}(C)$$
That is, we want
$g^{-1}T(\oO)g \subset\PGL_2(\oO)$. The $\gamma$ constructed above is such an element.  Moreover, the set of such $g$ forms
a single double coset in $T(\A_k) \backslash \PGL_2(\A_k) / \PGL_2(\oO)$, as can be seen by doing a local calculation.

In other words, any two such maps $X_T \to X_{\PGL_2}$ differ by pre-composing with multiplication by some element of $X_T/T(\oO)$,
and hence all such maps take the form $M \mapsto \pi_* (M \otimes L)$ for some fixed line bundle $L$.

\subsection{Completely framed vector bundles}  We briefly describe in more geometric terms  the
 ``pushforward'' map $\rho_\gamma:X_T\hookrightarrow X_{\GL_2}$ induced by $\gamma$.

By a { \em completely framed} vector bundle, we mean a vector bundle $V$ together with
a frame $S_v: \oO_v^{\oplus k} \cong V_v$ of the completed stalk $V_v$ at each place $v$.
An isomorphism of completely framed vector bundles is just an isomorphism of the underlying vector
bundles which preserves the frames.

The space $X_{\GL_r} = \GL_r(k) \backslash \GL_r(\A)$ parameterizes isomorphism
classes of completely framed vector bundles.
Indeed, to $g\in\GL_r(\A_k)$, we associate the vector bundle
$V_g$ of Equation (\ref{bunvec}), together with the $\oO_v$-frame given by $\oO_v^{\oplus k} \xrightarrow{\cdot g_v} V_{g,v}$ of the completed stalk $V_{g,v}$
at every place $v$ of $k$.  If $g_0\in\GL_r(k)$, then
$k^r\xrightarrow{\cdot g_0} k^r$ induces an isomorphism $V_g\rightarrow V_{g_0g}$ preserving these frames; on the other hand,
isomorphisms of vector bundle
induce $k$-linear isomorphisms of their  meromorphic sections.

Likewise, for a nonsplit rank one torus $T$ coming from the degree two field extension $l/k$ or equivalently the degree two cover $\pi: D \to C$,
the space $X_T$ parameterizes isomorphism classes
of: a line bundle $\Ll$ on $D$, together with, for each place {\em of $k$}, a framing of the completed stalk $\oO_{l, v} \to \Ll|_{\pi^{-1}(v)}$.

In these terms, the ``pushforward'' map  $\rho_{\gamma}:X_T\rightarrow X_{\GL_2}$ sends a line bundle $\Ll$ to $\pi_* \Ll$, and
creates frames by composition:
$$ \oO_v^{\oplus 2} \xrightarrow{\cdot \gamma} \oO_{l,v} \to \Ll|_{\pi^{-1}(v)} = (\pi_* \Ll)_v$$
 This has slightly different geometric behavior at points $v$ of $C$ that split and at points that stay inert or ramify. If $v$ splits into two points $w_1,w_2$ in $D$ then $\mathcal{L}$ has distinguished vectors $\ell_{w_1}\in\mathcal{L}_{w_1}$
and $\ell_{w_2}\in\mathcal{L}_{w_2}$, giving rise to a  basis of $\pi_*\mathcal{L}_v$.
However, if $v$ stays inert or ramifies then $\mathcal{L}_v$ only has the one distinguished vector $\ell_v$ and we must use the ring structure of $\oO_{l,v}$ to get a second basis element for $\pi_*\mathcal{L}_v$ over $\oO_v$.

\subsection{Hecke Measures} \label{smallshift}
Consider the diagonal subgroup $\PGL_2(\A) \subset \PGL_2(\A) \times \PGL_2(\A)$.
Fix an effective divisor  $D=\sum_v n_vP_v$ on a curve $C$.  Let $\pi_v$ be a uniformizer of $\oO_v$, and consider
the element
$$g_D = \left(1, \prod_{v\in C} \left(\begin{smallmatrix} 1 & 0 \\ 0 & \pi^{n_v}\end{smallmatrix}\right)\right)  \in \PGL_2(\A)  \times \PGL_2(\A)$$
Let $\mu_D$ be the pushforward of the natural measure on
$X_{\PGL_2}$ to $\mathrm{Bun}_2\times\mathrm{Bun}_2$ along
$$X_{\PGL_2} \xrightarrow{\rho_{g_D}} X_{\PGL_2} \times X_{\PGL_2} \to \Bun_2 \times \Bun_2$$.

The composition factors through
 $X_0(D):=X_{\PGL_2}/K_D$, where  $K_D\subset\PGL_2(\oO)$ is the compact subgroup of elements
$$\begin{pmatrix} a_v& b_v\\ c_v & d_v\end{pmatrix}_{v\in C}\textrm{ such that } \forall v\in C, \pi_v^{n_v}|c_v .$$

Equation \eqref{bunvec} gives a geometric description of $X_0(D)$:
it parameterizes rank 2 vector bundles $V$ equipped with a flag $\mathfrak{F}_v:\oO_v/\pi_v^{n_v}\oO_v \hookrightarrow  V_v/\pi_v^{n_v}V_v$ for each point $v$,
up to twisting by line bundles; that is, $(V,\mathfrak{F}_v)\cong (V\otimes\mathcal{L}, \mathfrak{F}_v\otimes \mathcal{L}_v)$.
Equivalently, this can be described as a pair of vector bundles $W\subset V$ such that $V/W$ is isomorphic to $\oO_D$, up to the equivalence
$(V,W)\cong (V\otimes\mathcal{L},W\otimes{L})$.
The map to  $\mathrm{Bun}_2 \times \mathrm{Bun}_2$ just takes such an object to $(V, W)$.


\begin{thebibliography}{100}

\bibitem[AT]{AT} S. Altug, J. Tsimerman, {\em Metaplectic Ramanujan Conjecture Over Function Fields with Applications to Quadratic Forms},
Intl. Math. Res. Not. {\bf 14} (2013).

\bibitem[ACGH]{ACGH} E. Arbarello, M. Cornalba, P.~A. Griffiths, J. Harris, \underline{Geometry of algebraic curves, I}, (Springer-Verlag, 1985).


\bibitem[SGA4]{SGA4} M. Artin, A. Grothendieck et J. L. Verdier,
avec la collaboration de P. Deligne et B.
Saint-Donat,
{\em Th\'eorie des topos et cohomologie \'etale des sch\'emas; Tome 3, SGA 4},
Lecture Notes in Mathematics {\bf 305}, (Springer-Verlag, Berlin, 1973).


\bibitem[BBD]{BBD} A. Beilinson, J. Bernstein, P. Deligne, {\em Faisceaux pervers}, Ast\'erisque {\bf 100} (1982).

\bibitem[BM]{BM} W. Borho, R. MacPherson, {\em Partial resolutions of nilpotent varieties}, Ast\'erisque {\bf 101-102} (1983), 23-74.

\bibitem[BB]{BB} P. Bressler and J.-L. Brylinski, {\em On the singularities of the theta divisors on Jacobians}, alg-geom/9701009.

\bibitem[Del]{D} P. Deligne, {\em La conjecture de Weil: II}, Pub. Math. de l'IH\'ES {\bf 52} (1980), 137-252.

\bibitem[SGA4\textonehalf]{SGA45}
P. Deligne,
avec la collaboration de J. F.
Boutot, A. Grothendieck, L. Illusie et J. L. Verdier,
{\em Cohomologie \'etale}; SGA 4$\frac{1}{2}$,
Lecture Notes in Mathematics {\bf 569} (Springer-Verlag, Berlin,
1977)


\bibitem[Du]{Du} W. Duke, {\em Hyperbolic distribution problems and half-integral weight Maass forms}, Invent. Math. {\bf 92}.1 (1988), 73-90.

\bibitem[EG]{EG} M. Einsiedler, A. Ghosh, {\em Rigidity of measures invariant under semisimple groups in positive characteristic.}
Proc. Lond. Math. Soc. (3) {\bf 100}.1 (2010), 249-268.

\bibitem[EMV]{EMV}
M. Einsiedler, G. Margulis, A. Venkatesh, {\em Effective equidistribution for closed orbits of semisimple groups on homogeneous spaces},
Invent. math. {\bf 177}.1 (2009), 137-212.

\bibitem[EMiV]{EMiV}
J. Ellenberg, P. Michel, A. Venkatesh, {\em Linnik's ergodic method and the distribution of integer points on spheres},
arxiv: 1001.0897


\bibitem[ELMV1]{ELMV1} M. Einsiedler, E. Lindenstrauss, P. Michel, A. Venkatesh {\em Distribution of periodic torus orbits on homogeneous spaces},
Duke Math. J. {\bf 148}.1 (2009).

\bibitem[ELMV2]{ELMV2} M. Einsiedler, E. Lindenstrauss, P. Michel, A. Venkatesh, {\em Distribution of periodic torus orbits and Duke's theorem for cubic fields},
Annals of Math. (2) {\bf 173}.2 (2011), 815-885.

\bibitem[ELMV3]{ELMV3} M. Einsiedler, E. Lindenstrauss, P. Michel, A. Venkatesh, {\em The distribution of closed geodesics on the modular surface, and Duke's theorem},
arxiv:1109.0413.


\bibitem[Ful]{Ful} W. Fulton, \underline{Intersection theory}, (Springer, 1984).


\bibitem[IY]{IY} R. Inoue and T. Yamazaki, {\em Cohomological study on variants of the Mumford system, and integrability of the Noumi-Yamada system}, math-ph/0501048.


\bibitem[JMV]{JMV} J. S. Ellenberg, P. Michel, A. Venkatesh, {\em Linnik's ergodic method and the distribution of integer points on spheres}, arXiv:1001.0897.



\bibitem[Kl]{Kl} S. L. Kleiman, {\em The transversality of a general translate}, Comp. Math. {\bf 28}.3 (1974), 287-297.

\bibitem[Kl2]{Kl2} S. L. Kleiman, {\em About the conormal scheme}, in \underline{Complete Intersections}, (Springer, 1984), 161-197.


\bibitem[Lin]{ELPC} E. Lindenstrauss, private communication.

\bibitem[LT]{LT} L\^e D. T. and B. Teissier, {\em Vari\'et\'es polaires locales et classes de Chern des vari\'et\'es singuli\`eres},
Annals of Math. {\bf 114} (1981), 457--491.

\bibitem[Lin]{Lin} Yu. V. Linnik, {\em Ergodic properties of algebraic fields}, Ergebnisse der Mathematik und ihrer Grenzgebiete, Band 45 (1968).

\bibitem[Mac]{M} R. MacPherson, {\em Chern classes for singular varieties},
Annals of Math. {\bf 100}, 423-432.

\bibitem[Mas]{Ma} D. B. Massey, {\em Numerical invariants of perverse sheaves},
Duke Math J. {\bf 73}.2 (1994), 307--369.


\bibitem[MS]{MS2} L. Migliorini and V. Shende, {\em Higher discriminants and the topology of algebraic maps}, arXiv:1307.4059.

\bibitem[Nak]{N} A. Nakayashiki, {\em On the cohomology of theta divisors of hyperelliptic Jacobians}, math/0010006.

\bibitem[Pi]{P} J. Pila, {\em O-minimality and the Andr\'e-Oort conjecture for $\C^n$}, Annals of Math. {\bf 173}.3 (2001), 1779-1840.

\bibitem[Rag]{Rag} R. Piene, {\em Polar classes of singular varieties}, Ann. Sci. \'ENS {\bf 4} (1978), 247-276.

\bibitem[Rat1]{R1} M. Ratner, {\em Strict measure rigidity for unipotent subgroups of solvable groups}, Invent. Math. {\bf 101} (1990), 449-482.

\bibitem[Rat2]{R2} M. Ratner, {\em On measure rigidity of unipotent subgroups of semisimple groups}, Acta Math. {\bf 165} (1990), 229-309.

\bibitem[Wal]{W} J.-L. Waldspurger, {\em Sur les coefficients de Fourier des formes modulaires de poids demi-entier}, J. de Math. Pures et Appl. Neu. S\'erie {\bf 60}.4 (1981), 375-484.

\bibitem[XZ]{XZ} M.Xiong and A.Zaharescu, {\em Statistics of the Jacobians of hyperelliptic curves over finite fields}, arxiv:1007.4621.

\bibitem[Zha]{Z} S.-W. Zhang, {\em Equidistribution of CM-points on quaternion Shimura varieties}, Int. Math. Res. Not. (2005), 3657-3689.


\end{thebibliography}
\end{document}